\newcommand{\Si}{{\Sigma}}
\newcommand{\eqdef}{\stackrel{\scriptscriptstyle\rm def}{=}}
\definecolor{Red}{cmyk}{0,1,1,0}
\definecolor{verde}{cmyk}{1,0,1,0}
\definecolor{azul}{cmyk}{1,1,0,0}
\begin{document}

\title[Attracting graphs of skew products]{Attracting graphs of skew products with non-contracting fiber maps}

\author[L. J.~D\'\i az]{Lorenzo J. D\'\i az}
\address{Departamento de Matem\'atica PUC-Rio, Marqu\^es de S\~ao Vicente 225,
Rio de Janeiro 22451-900, Brazil}
\email{lodiaz@mat.puc-rio.br}

\author[E. Matias]{Edgar Matias}
\address{Instituto de Matem\'atica Universidade Federal do Rio de Janeiro, Av. Athos da Silveira Ramos 149, Cidade Universit\'aria - Ilha do Fund\~ao, Rio de Janeiro 21945-909,  Brazil}
\email{edgarmatias9271@gmail.com}

\address{Departamento de Matem\'atica PUC-Rio, Marqu\^es de S\~ao Vicente 225,
Rio de Janeiro 22451-900, Brazil}

\begin{abstract}
We  study  attracting graphs of step skew products from the topological and ergodic points of view where 
the usual contracting-like assumptions of the fiber dynamics are replaced by  weaker merely topological conditions. In this context, we 
prove the existence of an attracting invariant graph and study its topological properties. We  prove the existence of globally  attracting measures and we show that (in some specific cases) the rate of convergence to these measures is
exponential.
\end{abstract}

\begin{thanks}{This paper is part of the PhD thesis of EM (PUC-Rio) supported by CAPES.
EM is currently supported by PNPD CAPES.
LJD is partially supported by CNPq and CNE-Faperj.
The authors warmly thank K. Gelfert for her useful comments on this paper.  
}\end{thanks}
\keywords{attracting measure, bony set, coding map, invariant graph, skew product,
  target set}
\subjclass[2000]{%
37C70, 
58F12,
37A30.
}

\date{}
\newtheorem{mteo}{Theorem}
\newtheorem{mcoro}{Corollary}
\newtheorem{mprop}{Proposition}
\newtheorem{mexample}{Example}

\newtheorem{teo}{Theorem}[section]
\newtheorem{prop}[teo]{Proposition}
\newtheorem{lema}[teo]{Lemma}
\newtheorem{schol}[teo]{Scholium}
\newtheorem{coro}[teo]{Corollary}
\newtheorem{defi}[teo]{Definition}
\newtheorem{example}[teo]{Example}
\newtheorem{remark}[teo]{Remark}
\newtheorem{nota}[teo]{Notation}
\newtheorem{claim}[teo]{Claim}
\newtheorem{fact}[teo]{Fact}

\numberwithin{equation}{section}

\maketitle

\section{Introduction}
\label{ss.introstep}

We  study  attracting graphs of step skew products from the topological and ergodic points of view. 
Step skew products naturally arise in the study of  random maps, but they are also an important tool in the construction of topological dynamical systems with a very rich dynamics.  In this paper, we combine both aspects and give criteria for the existence of global topological attractors and study their ergodic properties. As this is a very wide field which typically combines various approaches, we will refer to some key references only in the course of this introduction.

 Given 
 a  compact metric space $M$,
  finitely many continuous maps  $f_{i}\colon M\to M$, $i=1,\dots,k$,
  and the
 shift map $\sigma$ on the space of sequences $\Sigma_{k}=\{1,\dots, k\}^{\mathbb{Z}}$, the map
$F\colon \Sigma_{k}\times M\to  \Sigma_{k}\times M$
 defined by 
\begin{equation}\label{defiskew2} 
F(\vartheta,x)=(\sigma(\vartheta),f_{\vartheta_{0}}(x))
\end{equation}
is called a \emph{step skew product} over the shift  $\sigma$, where $\vartheta =(\vartheta_i)_{i\in \mathbb{Z}}$. The maps $f_{i}$ are called the \emph{fiber maps} of $F$ 
and $\sigma$ is the \emph{base map}. 

%
We consider the set of \emph{weakly hyperbolic sequences}
\begin{equation}\label{e.stmenos}
S_{F}\eqdef\big\{\vartheta\in \Sigma_{k}\colon 
\mbox{$\bigcap_{n\geq 0} f_{\vartheta_{-1}}\circ\cdots\circ f_{\vartheta_{-n}}(M)$   is a singleton}\big\} 
\end{equation}
and the \emph{coding map} defined by 
\begin{equation}
 \label{e.varrhoproj}
\rho\colon S_{F}\rightarrow M, \quad 
\rho(\vartheta)\eqdef\lim_{n\to \infty}f_{\vartheta_{-1}}\circ\cdots\circ f_{\vartheta_{-n}}(p).
\end{equation}
By the definition of the set  $S_{F}$, the limit above always exists and does not depend on the choice of
the point $p\in M$. 
The image of $S_F$ by $\rho$ is denoted by $A_{F}$ and called the \emph{target set}.

Note that for every $\vartheta \in S_F$ it holds
\begin{equation}\label{e.conjugation}
 f_{\vartheta_0} (\rho (\vartheta) )= \rho ( \sigma (\vartheta)) 
\end{equation}
and therefore the graph of $\rho$, (${\mathrm{graph}\,\rho}$), is $F$-invariant.
Invariant graphs are well studied objects when $\sigma$ is a measure-preserving transformation of
some  probability
space $(\Sigma_k, \mathscr{F}, \mathbb{P})$ such that
equation \eqref{e.conjugation} holds $\mathbb{P}$-a.e.. The study of these invariant graphs is well
understood in the case when the fiber maps satisfy some form of hyperbolicity (contraction), as for instance, 
having a negative Lyapunov exponent (i.e., non-uniformly contracting). 
In this setting and assuming that the fiber maps are Lipschitz, a classical result by Stark
\cite{Stark} claims that there is a unique invariant graph and that it is an attractor. Note that in this setting 
we have  $\mathbb{P} (S_F)=1$.
We observe that
there is another line of research, of some different flavor, concerning graphs of skew products with minimal base dynamics, see for example \cite{Jager} and the references therein. 

In this paper, we analyze invariant graphs in much more general contexts. First, we only assume  that the fiber maps are continuous. Second, we do not assume any contracting-like hypotheses.
 The only (purely topological) requirement 
is that the set $S_F$ of weakly hyperbolic sequences is nonempty. For systems satisfying this condition, we state simple topological properties that
imply that the invariant graph is the attractor of the system and is transitive.
If, in addition, we assume that $S_F$ has probability one (note that this does not involve any contracting-like property) we obtain also ergodic information about the invariant graph, which is attracting from the ergodic point of view. 
In this context, 
invoking a variation of the 
``splitting property''  of Dubins and Freedman \cite{Dubins} introduced in \cite{DiazMatias},
 we put Stark's result \cite{Stark} into a much more general framework.
 In some sense, this topological-like ``splitting property''  replaces in a topological way Stark's hypotheses on the Lyapunov exponent. 

We now briefly describe our results. The complete statements and definitions are given in Section~\ref{ss.statements}.
Let us 
start with the topological results. The
{\emph{maximal attractor}} $\Lambda_F$ of $F$ is defined by
\begin{equation}
\label{e.maximal}
\Lambda_{F} \eqdef \bigcap_{n\geq 0}
F^{n}(\Sigma_k \times M).
\end{equation}
First note that  $\overline{\mathrm{graph}\,\rho} \subset \Lambda_F$ and that, in general, this inclusion can be strict.
Observe that the $\omega$-limit set for $F$ of any point is contained in the maximal attractor.
We also observe that, in general, the union of the $\omega$-limit sets  of  ``generic points''
may be properly contained in the maximal attractor. Hence, when considering  ``attracting sets'',
the maximal attractor may be ``excessively big''. We will see that the set 
$\overline{\mathrm{graph}\,\rho}$ turns out to be ``the actual attractor'' of the system.

Theorem~\ref{mt.mixing} states that the closure of ${\mathrm{graph}\,\rho}$ is a topologically mixing 
$F$-invariant set. Moreover, there is a ``large''  subset of $\Sigma_k\times M$
consisting of points whose $\omega$-limit contains the set $\mathrm{graph}\, \rho$.
Further, we state under which conditions the  closure of 
${\mathrm{graph}\,\rho}$
and the $\omega$-limits coincide and when the closure of the graph of $\rho$ coincides with the maximal attractor.

Our second main result is an application of Theorem~\ref{mt.mixing}. It
is motivated by  \cite{Volk}  and deals with skew products with one dimensional fiber maps (i.e., 
$M=[0,1]$). 
Theorem~\ref{mt.espinhodesconexo} states that the structure of  $\overline{\mathrm{graph}\,\rho}$ can be much more complicated than perhaps expected.
Note that if  $M=[0,1]$ then $\Lambda_F \cap ( \{\vartheta\} \times [0,1])$ is either a point or an interval.
 In \cite{Volk} it is shown that generically, in the class of step skew products whose
 fiber maps are $C^{1}$ increasing diffeomorphisms of $[0,1]$ with image strictly inside $[0,1]$, there 
 is finitely many ``locally maximal attractor'' that is a {\emph{bony attractor,}} i.e., a closed set that
intersects almost every (with respect to some Markov measure) fiber at a single point and any other fiber at an
interval (a ``bone''). However, exactly as in the discussion above about maximal attractors, these locally maximal attractors may be excessively big in order to capture appropriately the forward asymptotic behavior. 
Hence, in what follows, we focus on the topological structure of $\overline{\mathrm{graph}\,\rho}$.
Theorem~\ref{mt.espinhodesconexo} states that for each $m\ge 1$ there are  robust examples for which there is a dense subset $\Gamma$ in the symbolic space $\Sigma_k$ such that for each $\vartheta\in \Gamma$ the intersection of the fibre $\{\vartheta\}\times [0,1]$ with 
 $\overline{\mathrm{graph}\,\rho}$ is the
 union of $m$ disjoint intervals. 

Continuing with the ergodic results, we
consider the shift map $\sigma$  as a measure-preserving transformation 
of some probability space
$(\Sigma_{k},\mathscr{F}, \mathbb{P})$. Note that if $\mathbb{P}$ is ergodic then 
 $\mathbb{P}(S_{F})=0$ or $1$.
Under the assumption $\mathbb{P}(S_{F})=1$, we give a complete description
  of the dynamics of the push forward $F_*$ of $F$ (i.e., $F_* \nu (A)= \nu (F^{-1}(A)$)
  in the space of  
  probability
  measures $\mu$ with marginal $\mathbb{P}$,
  \begin{equation} \label{e.marginal}
   \mathcal{M}_\mathbb{P}  (\Sigma_{k}\times M) \eqdef \{ \mu \in \mathcal{M}_1 (\Sigma_{k}\times M) \colon
  \Pi_\ast \mu =\mathbb{P}\}, 
  \end{equation}
  where 
  $ \mathcal{M}_1(\Sigma_{k}\times M)$ is the space of probability measures of $\Sigma_k\times M$ and
  $\Pi\colon \Sigma_k \times M \to \Sigma_k$ is the 
  projection, $\Pi(\vartheta, x) =\vartheta$.
  Note that the set $\mathcal{M}_\mathbb{P}(\Sigma_{k}\times M)$ is compact and $F_\ast$-invariant (see 
Proposition~\ref{p.marginal} for details).
  
  Our third main result, Theorem~\ref{mt.skewproduct}, states that
  there is a measure $v_\mathbb{P} \in  \mathcal{M}_\mathbb{P} (\Sigma_{k}\times M)$ 
  supported on 
  the  graph of $\rho$ 
such that   (in the weak$\ast$ topology)
  \begin{equation}\label{letacc}
 \lim_{n\to \infty}F_{*}^{n}\nu=v_\mathbb{P}, \quad \mbox{for every $\nu\in  \mathcal{M}_\mathbb{P} (\Sigma_{k}\times M) $.}
 \end{equation}
 Therefore, we call the measure $v_\mathbb{P}$ the {\emph{global attractor}} of 
$F_{*}|_{\mathcal{M}_{\mathbb{P}}}$. Note that this result is a version of the Letac's contraction principle for skew products, \cite{Letac}. We emphasize that the probability $\mathbb{P}$ does not have to be  Bernoulli. 
Finally, let us observe that the idea of 
 using the ``reversed compositions'' (in the spirit of  \eqref{e.varrhoproj} and explicitly stated in the definition of the
 set $S_F$) to get ergodic properties can be already found in 
 the pioneering works by Furstenberg, see \cite{Fur}. Later, Letac \cite{Letac} used this method
 to prove the existence and uniqueness of the stationary measure in the context i.i.d. random products.
 
%

 Our last main result, Theorem~\ref{exponentialdw}, is a strong version of Theorem~\ref{mt.skewproduct} 
in the case when $M$ is a compact subset of $\mathbb{R}^m$ and
$\mathbb{P}$ is a Markov measure and stated for 
step skew products satisfying the \emph{splitting
property} introduced  in
\cite{DiazMatias} (see Definition \ref{splitgene2}). This property is
 a variation 
of the classical splitting property introduced by
Dubins and Freedman
in \cite{Dubins} to study 
Markov operators associated to (i.i.d.) random products.
In \cite{DiazMatias} we prove that this  property implies  $\mathbb{P}(S_F)=1$ and hence,
by Theorem~\ref{mt.skewproduct}, there is a global attracting measure  $v_\mathbb{P} \in \mathcal{M}_\mathbb{P}$. Theorem~\ref{exponentialdw} states that rate of convergence to the attracting measure is exponential
(with respect to the Wasserstein metric).

We conclude this introduction observing that our results were motivated by the
constructions in \cite{Yuri} and \cite{DiGe} of bony attractors and porcupine-like horseshoes (see the discussion in Section~\ref{ss.examples}) but that our results can be applied to a much more wider contexts, this is illustrated with a series of examples in 
Section~\ref{ss.statements}.
The goal is  to understand the dynamics of these examples
from the topological and ergodic points of view.

\subsection*{Organization of the paper}
In Section~\ref{ss.statements} we state precisely the main results in this paper and also provides some classes of examples. Each subsequent section is dedicated to the proof of one of the theorems above. Thus Theorem $k$ is proved in Section $k+2$, where $k=1, \dots, 4$.

\section{Statement of results}\label{ss.statements}

Let $M$ be a compact  metric space and $F\colon \Sigma_{k}\times M\to  \Sigma_{k}\times M$ a step skew product as in~\eqref{defiskew2} 
 over the shift map with fiber maps $f_{1},\dots,f_{k}\colon M\to M$. 
 We use the following notation for compositions of fiber maps,
\begin{equation}\label{eq:defcompo}
	f_\vartheta^n\eqdef f_{\vartheta_{n-1}} \circ \cdots \circ f_{\vartheta_{0}},
	\quad\mbox{ where }
	\vartheta\in \Sigma_{k}.
\end{equation}	
We denote by $\mathrm{IFS}(f_{1},\dots,f_{k})$ the \emph{iterated function system} induced 
by the maps $f_{i}$ (i.e., the set of maps $g$ of the form $g=f_{\omega_0} \circ \cdots \circ f_{\omega_i}$, 
where $\omega_j\in \{1,\dots,k\}$ and $i\ge 0$).

\subsection{Attracting invariant graphs}
 A key object in our study is the
 \emph{Barnsley-Hutchinson operator} induced by $F$  that associates to each set $A\subset M$ the set
\begin{equation}\label{e.BH}
\mathcal{B}_{F}(A)\eqdef  \bigcup_{i=1}^{k} f_{i}(A).
\end{equation}
This operator acts continuously in the space of nonempty compact subsets 
of $M$ endowed with the Hausdorff 
metric.
A fixed point $A$ of the Barnsley-Hutchinson operator
is  \emph{Lyapunov stable} if for every open neighborhood $V$ of $A$ there
is an open  neighborhood $V_{0}$ of $A$ such that  
\begin{equation}
\label{e.stableBH}
\mathcal{B}_{F}^{n}(V_{0})\subset V \quad \mbox{for every}\quad n\geq 0.
\end{equation}

Given a compact $F$-invariant subset $Y$
 of $\Sigma_k\times M$, the map $F$ is
\emph{topologically mixing}
in $Y$ if  for every pair of nonempty relatively open sets $U$ and $V$ of $Y$ 
 there is $n_0$ such that
$F^n(V)\cap U\ne \emptyset$ for all $n\ge n_0$.
Given $z=(\vartheta,x)$
we denote by $\omega_{F} (z)$ the $\omega$-limit set of $z$ for $F$ (i.e., the points $(\eta,y)$ such that there is a sequence $n_k \to \infty$ such that
$F^{n_k}(z)\to (\eta,y)$).
We say that a sequence $\vartheta\in \Sigma_k$ is \emph{disjunctive} if its forward 
$\sigma$-orbit  is dense in $\Sigma_k$.
A point $z=(\vartheta,x)$ such that $\vartheta$ is disjunctive is called 
a \emph{disjunctive point}.

Recall the definition of the target set $A_F =\rho (S_F)$.
Denote by $F_{A}$ the restriction of $F$ to the set $\Sigma_{k}\times \overline{A_{F}}$
and by $\Lambda_{F_{A}}$
the maximal attractor of $F_{A}$, that is,
\begin{equation}\label{e.maximalattractor}
\Lambda_{F_A} \eqdef \bigcap_{n\geq 0}
F_{A}^{n}(\Sigma_k \times \overline{A_{F}}).
\end{equation}

\begin{mteo}[Topological attractor]\label{mt.mixing} 
Let $M$ be a compact  metric space and $F\colon \Sigma_{k}\times M\to  \Sigma_{k}\times M$ a step skew product as in~\eqref{defiskew2}. 
Assume that $S_{F}\neq \emptyset$. Then:
\begin{itemize}
\item[(1)]
For every disjunctive point $z$  it holds 
$\overline{\mathrm{graph}\,\rho}\subset\omega_F(z)$. Moreover, assume that
\begin{itemize}
\item either $\overline{A_{F}}$ is a Lyapunov stable fixed point
of the Barnsley-Hutchinson operator $\mathcal{B}_{F}$
\item 
or  $\overline{A_{F}}$
 has nonempty interior, 
\end{itemize} 
 then
$
\overline{\mathrm{graph}\,\rho}=\omega_F(z).
$
\item[(2)]
$
\Lambda_{F_{A}}=\overline{\mathrm{graph}\,\rho}.
$
\item[(3)]
$F$ is topologically mixing
in $\overline{\mathrm{graph}\,\rho}$.
\end{itemize}
\end{mteo}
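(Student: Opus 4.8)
\emph{Strategy.} I would establish the three items in the order: first part of (1), then (2), then the ``moreover'' clause of (1), then (3) — there is no circularity, since the first part of (1) and the inclusion ``$\subseteq$'' of (2) are independent, ``$\supseteq$'' of (2) uses the first part of (1), and the ``moreover'' of (1) uses (2). Two structural features of step skew products drive everything. First, both the condition ``$\vartheta\in S_F$'' and the value $\rho(\vartheta)$ depend only on the negative coordinates $(\vartheta_{-1},\vartheta_{-2},\dots)$; hence, if a sequence agrees with $\eta$ on the coordinates $n-m,\dots,n-1$, then $f_\vartheta^n(x)\in f_{\eta_{-1}}\circ\cdots\circ f_{\eta_{-m}}(M)$, and when $\eta\in S_F$ the sets $f_{\eta_{-1}}\circ\cdots\circ f_{\eta_{-m}}(M)$ decrease in the Hausdorff metric to $\{\rho(\eta)\}$. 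Second, a \emph{gluing} fact: for a finite word $w=(w_1,\dots,w_n)$ and $\mu\in S_F$, the sequence whose negative part is $(w_1,\dots,w_n,\mu_{-1},\mu_{-2},\dots)$ lies in $S_F$ and has $\rho$-value $f_{w_1}\circ\cdots\circ f_{w_n}(\rho(\mu))$ (because applying the continuous map $f_{w_1}\circ\cdots\circ f_{w_n}$ to the nested sets converging to $\{\rho(\mu)\}$ gives nested sets converging to a singleton); for $n=1$ this yields $f_i(A_F)\subseteq A_F$, hence $\mathcal{B}_{F}(\overline{A_F})\subseteq\overline{A_F}$, so $\Sigma_k\times\overline{A_F}$ is forward $F$-invariant, as is $S_F$ under $\sigma$. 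I would keep in mind that $\rho$ need not be continuous, so every approximation must be produced through the negative-block mechanism, never by continuity of $\rho$.

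\emph{Item (1).} A disjunctive $\vartheta$ visits every nonempty open subset of $\Sigma_k$ along an infinite set of times (if the set of return times to an open $W$ had a maximum $N$, then $\sigma^{N+1}\vartheta$ would again be disjunctive — removing finitely many points from a dense subset of the perfect space $\Sigma_k$, $k\ge 2$, leaves a dense set — so its orbit would still meet $W$; for $k=1$ the whole statement is immediate). Given $\eta\in S_F$, choose $n_k\to\infty$ with $\sigma^{n_k}\vartheta$ agreeing with $\eta$ on $[-k,k]$; then $\sigma^{n_k}\vartheta\to\eta$ and $f_\vartheta^{n_k}(x)\in f_{\eta_{-1}}\circ\cdots\circ f_{\eta_{-k}}(M)\to\{\rho(\eta)\}$, so $F^{n_k}(z)\to(\eta,\rho(\eta))$; thus $\mathrm{graph}\,\rho\subseteq\omega_F(z)$ and, taking closures, $\overline{\mathrm{graph}\,\rho}\subseteq\omega_F(z)$. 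For the ``moreover'' clause I would show $d(f_\vartheta^n(x),\overline{A_F})\to 0$. Fix $\mu\in S_F$ and choose $m$ so that $f_{\mu_{-1}}\circ\cdots\circ f_{\mu_{-m}}(M)$ is contained in a prescribed open set $V_0$: under hypothesis (a), the neighbourhood of $\overline{A_F}$ provided by Lyapunov stability for $V=\{\,d(\cdot,\overline{A_F})<\varepsilon\,\}$; under hypothesis (b), $V_0=\mathrm{int}\,\overline{A_F}$, after picking $\mu$ with $\rho(\mu)\in\mathrm{int}\,\overline{A_F}$ (possible since $A_F$ is dense in $\overline{A_F}$). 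By disjunctivity some $f_\vartheta^{n_1}(x)$ lies in that set; under (a), all later iterates then lie in $\mathcal{B}_{F}^{\,j}(V_0)\subseteq V$, and under (b), all later iterates stay in $\overline{A_F}$ by forward invariance. Hence $\omega_F(z)\subseteq\Sigma_k\times\overline{A_F}$; since $\omega_F(z)$ is $F$-invariant and $F$ coincides with $F_A$ there, $\omega_F(z)=F_A^n(\omega_F(z))\subseteq F_A^n(\Sigma_k\times\overline{A_F})$ for all $n$, so $\omega_F(z)\subseteq\Lambda_{F_A}=\overline{\mathrm{graph}\,\rho}$ by item (2), giving equality.

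\emph{Items (2) and (3).} For (2), inclusion ``$\supseteq$'' follows from the first part of (1) applied to a disjunctive $z=(\vartheta,x)$ with $x\in\overline{A_F}$: its forward orbit stays in $\Sigma_k\times\overline{A_F}$, so $\overline{\mathrm{graph}\,\rho}\subseteq\omega_F(z)\subseteq\Lambda_{F_A}$. For ``$\subseteq$'', if $(\eta,y)\in\Lambda_{F_A}$ then for each $n$ there is a preimage in $\Sigma_k\times\overline{A_F}$ whose first coordinate must be $\eta$ shifted right by $n$, so $y\in f_{\eta_{-1}}\circ\cdots\circ f_{\eta_{-n}}(\overline{A_F})$ for all $n$; using $\overline{A_F}=\overline{\rho(S_F)}$ and continuity of the finite composition, pick $\mu\in S_F$ with $f_{\eta_{-1}}\circ\cdots\circ f_{\eta_{-n}}(\rho(\mu))$ within $2^{-n}$ of $y$, and glue: the sequence $\nu$ with negative part $(\eta_{-1},\dots,\eta_{-n},\mu_{-1},\mu_{-2},\dots)$ and positive part that of $\eta$ lies in $S_F$, has $\rho(\nu)=f_{\eta_{-1}}\circ\cdots\circ f_{\eta_{-n}}(\rho(\mu))$, and agrees with $\eta$ on $[-n,n]$, so $(\nu,\rho(\nu))\to(\eta,y)$. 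For (3) — $\overline{\mathrm{graph}\,\rho}$ is compact and $F$-invariant by (2) — given nonempty relatively open $U,V$, pick $(\alpha,\rho(\alpha))\in U$, $(\beta,\rho(\beta))\in V$ with $\alpha,\beta\in S_F$ (density of $\mathrm{graph}\,\rho$), and fix $M,M'$ so large that the relative $2^{-M}$-ball about $(\beta,\rho(\beta))$ lies in $V$, and that the relative $2^{-M'}$-ball about $(\alpha,\rho(\alpha))$ together with $f_{\alpha_{-1}}\circ\cdots\circ f_{\alpha_{-M'}}(M)$ around $\rho(\alpha)$ lies in $U$. For $n>M+M'$ let $\vartheta^{(n)}$ have the \emph{same negative part as} $\beta$ (so $\vartheta^{(n)}\in S_F$ and $\rho(\vartheta^{(n)})=\rho(\beta)$), agree with $\beta$ on $[0,M]$, and satisfy $\vartheta^{(n)}_{n+i}=\alpha_i$ for $|i|\le M'$ — these constraints live on disjoint coordinate windows. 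Then $(\vartheta^{(n)},\rho(\vartheta^{(n)}))\in V$, and by \eqref{e.conjugation} $F^n(\vartheta^{(n)},\rho(\vartheta^{(n)}))=(\sigma^n\vartheta^{(n)},\rho(\sigma^n\vartheta^{(n)}))$ with $\sigma^n\vartheta^{(n)}$ agreeing with $\alpha$ on $[-M',M']$ and $\rho(\sigma^n\vartheta^{(n)})\in f_{\alpha_{-1}}\circ\cdots\circ f_{\alpha_{-M'}}(M)$, so this point lies in $U$; hence $F^n(V)\cap U\ne\emptyset$ for all $n>M+M'$.

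\emph{Main obstacle.} The delicate point is the ``moreover'' clause of (1): without one of the two hypotheses the reverse inclusion is genuinely false, since the $\omega$-limit of a disjunctive point fills $\Sigma_k\times\bigcap_{n}\mathcal{B}_{F}^{\,n}(M)$, which may strictly contain $\Sigma_k\times\overline{A_F}$, and non-contraction of the fiber maps forbids propagating proximity to $\overline{A_F}$ forward by plain continuity. The mechanism that must be made to work is ``enter a trapping set once, then never leave it'': this is exactly what Lyapunov stability of $\overline{A_F}$ as a fixed point of $\mathcal{B}_{F}$ supplies in case (a), and what forward invariance of $\overline{A_F}$ under the $f_i$ together with $\overline{A_F}$ having nonempty interior supplies in case (b). Everything else is careful but routine bookkeeping with cylinder sets and the negative-block mechanism, the one recurring caution being never to invoke continuity of $\rho$.
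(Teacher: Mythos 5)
Your proof is correct, and it establishes the same result by the same underlying mechanism as the paper: manipulate the negative coordinates of weakly hyperbolic sequences (which alone determine membership in $S_F$ and the value of $\rho$) while leaving the positive coordinates free, and use disjunctivity or the mixing of the shift to land in a prescribed cylinder. The difference is one of decomposition rather than of substance. The paper factors the argument through two reusable preparatory lemmas: an ``intersection lemma'' (Lemma~\ref{l.interesecao}), which, given a cylinder $[-\ell;b_{-\ell}\dots b_\ell]\times J$ meeting $A_F$ appropriately, produces positive tails $\xi^{+,n}$ for which $F^n([\xi^{+,n}]\times M)$ lands inside that cylinder; and an ``invariance lemma'' (Lemma~\ref{l.invariancep}), which says that any compact, fully $F$-invariant subset of $\Sigma_k\times\overline{A_F}$ lies in $\overline{\mathrm{graph}\,\rho}$. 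Lemma~\ref{l.invariancep} is then applied once to $\Lambda_{F_A}$ (item~(2)) and once to $\omega_F(z)$ (the ``moreover'' clause of item~(1)), and Lemma~\ref{l.interesecao} reappears in the proof of mixing. You instead inline a single ``gluing fact'' ($\nu^-=(\eta_{-1},\dots,\eta_{-n},\mu_{-1},\dots)$ with $\mu\in S_F$ gives $\nu\in S_F$ and $\rho(\nu)=f_{\eta_{-1}}\circ\cdots\circ f_{\eta_{-n}}(\rho(\mu))$), use it directly both for the ``$\subseteq$'' half of item~(2) and for mixing (via the explicit $\vartheta^{(n)}$), and for the ``moreover'' clause you bypass a second application of the invariance lemma by observing that $\omega_F(z)$, once trapped in $\Sigma_k\times\overline{A_F}$, is a compact fully invariant set and hence sits inside $\Lambda_{F_A}$, which equals $\overline{\mathrm{graph}\,\rho}$ by item~(2). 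The paper's route buys a cleaner modular structure (the same two lemmas serve items~(1), (2), and (3)); your route buys a more explicit and somewhat shorter mixing argument and a slicker deduction of the ``moreover'' clause directly from item~(2). Both are complete; in particular your discussion of the two hypotheses in the ``moreover'' clause (Lyapunov stability trapping the orbit in every $\varepsilon$-neighborhood of $\overline{A_F}$, versus nonempty interior trapping it inside $\overline{A_F}$ itself via forward invariance) matches the paper's reasoning exactly.

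One small caution: in your mixing argument you implicitly use that $\operatorname{diam}\bigl(f_{\alpha_{-1}}\circ\cdots\circ f_{\alpha_{-M'}}(M)\bigr)\to 0$ so that the constraint ``$w\in f_{\alpha_{-1}}\circ\cdots\circ f_{\alpha_{-M'}}(M)$'' forces $w$ close to $\rho(\alpha)$; this is exactly why $\alpha$ must be chosen in $S_F$ rather than merely in the closure, which you do, but it is worth making the dependence explicit when writing this up.
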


\begin{remark}
{\em{
Note that if $\overline{A_{F}}=M$ then item (2) implies $
\Lambda_{F}=\Lambda_{F_A}=\overline{\mathrm{graph}\,\rho}.$
A particular  setting where  $\Lambda_F=\overline{\mathrm{graph}\,\rho}$ 
is given by systems whose phase space 
is the {\emph{Hutchinson attractor}}\footnote{The Hutchinson attractor is the globally attracting fixed of the
Barnsley-Hutchinson operator
$\mathcal{B}_{F}$
when the maps $f_i$ are uniform contractions, see \cite{Hu}}
of a uniformly contracting subsystem, that is, 
there are uniform contractions
$g_1,\dots,g_\ell \in \mathrm{IFS}(f_{1},\dots,f_{k})$  such that the union of the sets $g_i(M)$, $i=1,\dots,\ell$, covers $M$. Note that these conditions imply that $A_{F}=M$ is the Hutchinson attractor, see \cite{Hu}.
These ideas were used in \cite{Yuri} to get robust examples of skew products whose maximal attractors
are graphs.}}
\end{remark}

\begin{example}\label{e.AFinterior}
{\em{Let  $K\subset M$ be a compact subset of $M$ with  nonempty interior and 
$f_1,\dots, f_k\colon M\to M$ uniformly contracting maps such that
\begin{equation}\label{e.k=k}
\bigcup_{i=1}^k f_i(K)=K.
\end{equation}
Then for every continuous maps $f_{k+1},\dots , f_{k+\ell} \colon M\to M$, $\ell \ge 0$, the step skew product
$F\colon \Sigma_{k+\ell} \times M\to \Sigma_{k+\ell} \times M$  with fiber maps $f_1,\dots, f_{k+\ell}$ 
is such that $K\subset \overline{A_F}$.
Thus $ \overline{A_F}$ has nonempty interior. Therefore we can 
apply Theorem~\ref{mt.mixing} to $F$.  Note that if $\ell \ge 1$ then the set $\overline{A_F}$ is, in general, unknown, but still we can conclude the existence of an invariant attracting graph.

\begin{figure}[h!]
\centering
\begin{tikzpicture}[xscale=3,yscale=3]
 \node[scale=0.6,below] at (0.5,0) {$K$};
   \node[scale=0.6,below] at (0.9,0.67) {$f_{1}$};
    \node[scale=0.6,below] at (0.3,0.9) {$f_{2}$};
      \node[scale=0.6,below] at (0.15,0.67) {$f_{3}$};

\draw[->] (0,0)-- (1,0);
\draw[->] (0,0)--(0,1);
\draw[-](0.4,0.02)--(0.4,-0.02);
\draw[-](0.7,0.02)--(0.7,-0.02);

\draw[dashed] (0,1)--(1,1);
\draw[dashed] (0,0)--(1,1);
\draw[dashed] (1,0)--(1,1);
\draw[dashed] (0.7,0.7)--(0.7,0.4);
\draw[dashed] (0.4,0.7)--(0.7,0.7);
\draw[dashed] (0.4,0.4)--(0.4,0.7);
\draw[dashed] (0.4,0.4)--(0.7,0.4);
 \draw[scale=1,domain=0:1,smooth,variable=\x,blue] plot ({\x},{-0.75*\x+1});
  \draw[scale=1,domain=0:1,smooth,variable=\x,red] plot ({\x},{0.33*\x+0.266});
    \draw[magenta, scale=0.5] (0,0) sin (0.5,2); 
     \draw[magenta, scale=0.5] (0.5,2) cos (2,0);
    
\end{tikzpicture}  
  \caption{Example~\ref{e.AFinterior}, one dimensional illustration (where $k=2$ and $\ell=1$).}
   \label{f.eAFinterior}
\end{figure}
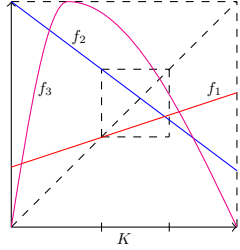

 To see that $K\subset \overline{A_F}$ consider the skew product  $G\colon \Sigma_{k} \times M\to \Sigma_{k} \times M$  with fiber maps $f_1,\dots, f_{k}$. By the Hutchinson theory \cite{Hu}, it follows that  $A_G=K$. 
 Since $A_G\subset A_F$ it follows $K\subset \overline{A_F}$.
 
 Figure~\ref{f.eAFinterior} illustrates the example in the one dimensional case.

 }}
\end{example}

%

\subsection{Disconnected spines}
Let $\mathcal {S}$ denote the set of  step skew products $F \colon \Sigma_{k}\times [0,1]
\to  \Sigma_{k}\times [0,1]$ over the shift map with  $C^1$ fiber maps. In $\mathcal S$ 
 we consider the following metric: given $F$ and $G$ in $\mathcal{S}$ with fiber maps $f_{1},\dots, f_{k}$ and 
 $g_{1},\dots, g_{k}$, respectively, let 
$$
d(F, G)\eqdef \max_i d_{C^1} (f_i, g_i), 
$$
where $d_{C^1}$ denotes the $C^1$ uniform distance.

The next result shows that  the fiber structure of attracting graphs  
 can be
  much more complicated than the fiber structure 
  of the maximal attractors. 

\begin{mteo}
[Disconnected spines]
\label{mt.espinhodesconexo}
For every $m \geq 1$ 
there is an open set $\mathcal{S}_m\subset\mathcal{S}$ of step skew products 
$
F\colon \Sigma_{4}\times [0,1]
\to  \Sigma_{4}\times [0,1]
$
with injective fiber maps
such that 
every $F\in \mathcal{S}_m$ has the following properties: 

 \begin{itemize}
 \item[(1)] For every disjunctive point $z$  it holds 
$\overline{\mathrm{graph}\,\rho}=\omega_F(z)$.
 \item[(2)]
 There is a dense subset 
 $\Gamma$
 of $\Sigma_{4}$  such that
 for every sequence $\vartheta\in \Gamma$ the intersection of the fibre $\{\vartheta\}\times [0,1]$ with 
 $\overline{\mathrm{graph}\,\rho}$ is the
 union of $m$ disjoint nontrivial intervals. 
  \end{itemize}
\end{mteo}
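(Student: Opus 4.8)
The plan is to construct, for each $m\ge 1$, an explicit open set $\mathcal{S}_m$ of step skew products over the full shift on four symbols, and to engineer the fiber maps $f_1,\dots,f_4$ so that the target set $A_F$ has a forced ``disconnected'' shape in a controlled family of fibers. Following the strategy of Example~\ref{e.AFinterior} and the remark after Theorem~\ref{mt.mixing}, I would let $f_1,f_2$ (say) be uniform contractions whose images cover a fixed interval $K\subset(0,1)$ of nonempty interior with $f_1(K)\cup f_2(K)=K$; this guarantees $K\subset\overline{A_F}$ regardless of the remaining fiber maps, hence $\overline{A_F}$ has nonempty interior and Theorem~\ref{mt.mixing}(1) applies, giving item~(1) immediately for \emph{every} $F$ in the family. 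The maps $f_3,f_4$ are the ones that will be used to ``break'' the fiber intersections: I would choose them as injective ($C^1$, increasing) maps whose images are two disjoint subintervals of $[0,1]$, arranged so that a suitable finite word $w=(w_0,\dots,w_{r-1})$ in the alphabet $\{1,\dots,4\}$ has $f_w(K)$ equal to a union of $m$ disjoint intervals — concretely, by first spreading $K$ into $m$ well-separated pieces using alternations of $f_3,f_4$ (an IFS with two disjoint-image contractions has a Cantor-like attractor; picking an appropriate cylinder of depth $\sim\log m$ isolates exactly $m$ pieces) and then re-contracting. The whole construction should be robust, i.e.\ all the relevant inequalities (disjointness of images, covering $\bigcup f_i(K)=K$, and the ``$m$ disjoint pieces'' count) are open conditions in the $C^1$ topology, which defines $\mathcal{S}_m$.

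For item~(2) the key observation is that $\overline{\mathrm{graph}\,\rho}\cap(\{\vartheta\}\times[0,1])$ is controlled by $\overline{A_F}$ through the relation $f_{\vartheta_0}(\rho(\vartheta))=\rho(\sigma\vartheta)$: more precisely, if one fixes the negative coordinates of a sequence $\vartheta$ to be a backward-periodic (or backward-eventually-constant) tail lying in $S_F$, so that $\rho$ is defined and locally constant there, then varying the positive coordinates and pushing forward by the corresponding finite compositions shows that the fiber slice of $\overline{\mathrm{graph}\,\rho}$ over such $\vartheta$ contains (a translate of) the set $f_v(\overline{A_F}\cap\text{window})$ for all admissible finite words $v$ read off the positive coordinates, and conversely is contained in the closure of the union of such images. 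The point is to arrange that for $\vartheta$ with positive part beginning with the engineered word $w$ (and then anything), the slice is forced to be exactly the union of $m$ intervals: it contains $f_w(K)$ which is $m$ disjoint intervals, and by choosing the fiber maps so that the images $f_1([0,1]),\dots,f_4([0,1])$ are pairwise arranged to never bridge the gaps between these $m$ pieces once $w$ has been applied (a nested-intervals / Markov-partition condition), the slice cannot be larger than this $m$-interval union. The set $\Gamma$ is then the set of sequences whose positive part starts with $w$ and whose negative part is one of the countably many backward-eventually-$1$ (hence in $S_F$, since $f_1$ is a contraction) tails — this is dense in $\Sigma_4$ because one is free to prescribe any finite central block.

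The main obstacle I expect is the \emph{matching upper bound} in~(2): showing the fiber slice is not bigger than $m$ intervals. The lower bound (it contains $f_w(K)$, which is $m$ disjoint intervals) is essentially by construction, but $\overline{A_F}$ is genuinely unknown when extra maps are present — all we control is $K\subset\overline{A_F}\subset[0,1]$. So I would need the fiber maps designed with a genuine \emph{Markov structure}: a finite partition $[0,1]=J_1\cup\cdots\cup J_N$ by closed intervals with disjoint interiors such that each $f_i$ maps each $J_p$ into some $J_q$ and, crucially, such that $f_w$ collapses everything into the $m$ chosen ``bones'' and no subsequent composition $f_v\circ f_w$ ever merges two bones. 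With such a combinatorial backbone, $\overline{A_F}$ itself is confined to a union of intervals respecting the partition, and the reversed-composition description of $\rho$ (equation~\eqref{e.varrhoproj}) together with the conjugation~\eqref{e.conjugation} lets one read off the slice over $\vartheta\in\Gamma$ exactly. The remaining work — checking $S_F\supset$ the chosen tails, verifying openness of all conditions, and checking that the $m$ intervals are nontrivial (nonzero length), which needs $f_w$ to not be constant, hence the re-contraction step must keep positive derivative — is routine once the combinatorial picture is fixed.
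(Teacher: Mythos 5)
There is a fundamental gap in your proposal, stemming from a confusion about what the fiber slice of $\overline{\mathrm{graph}\,\rho}$ actually is. By Theorem~\ref{mt.mixing}(2) and Proposition~\ref{p.maximalattractor} applied to $F_A$, the slice over a sequence $\vartheta$ equals $\{\vartheta\}\times I^{\mathrm{t}}_\vartheta$, where
$$
I^{\mathrm{t}}_\vartheta=\bigcap_{n\ge 1} f_{\vartheta_{-1}}\circ\cdots\circ f_{\vartheta_{-n}}(\overline{A_F}).
$$
This is a \emph{backward} intersection and depends only on the negative coordinates $\vartheta^-$; the forward words $v$ you read off the positive coordinates play no role. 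Your plan to ``prescribe the positive part to begin with $w$'' so that the slice ``contains $f_w(K)$'' therefore does not compute the quantity in item~(2) at all.

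The second, fatal problem is your choice of backward tail. You place $\vartheta^-$ in $S_F$ (``backward-eventually-$1$, hence in $S_F$ since $f_1$ is a contraction''). But by definition of $S_F$ in \eqref{e.stmenos}, for $\vartheta\in S_F$ the intersection $\bigcap_n f_{\vartheta_{-1}}\circ\cdots\circ f_{\vartheta_{-n}}(M)$ is a singleton, and a fortiori $I^{\mathrm{t}}_\vartheta$ is a single point, not a union of $m$ nontrivial intervals. So the very set $\Gamma$ you propose yields degenerate slices. The whole difficulty of item~(2) is to produce backward tails $\vartheta^-$ for which the spine over $\overline{A_F}$ is a \emph{nontrivial} set — i.e.\ tails that are emphatically \emph{not} weakly hyperbolic. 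This is precisely what the $\mathcal{K}$-pair hypothesis buys: the requirement in Definition~\ref{d.kpair}(2) that $f_{1}\circ f_{2}$ have a repelling fixed point in each $I_i$ forces the spine over the periodic tail $\overline{12}$ to be a nontrivial subinterval of each $I_i$, and the minimality from Definition~\ref{d.kpair}(3) forces $\overline{A_F}=\bigcup_{i=1}^m I_i$ exactly. Your construction, with $f_1,f_2$ uniform contractions, offers neither: the target set only satisfies $K\subset\overline{A_F}$ (you control it from below but not from above), and you have no mechanism producing non-collapsing backward compositions. Your remark that nontriviality ``needs $f_w$ to not be constant'' misses the point — nontriviality is about the infinite backward intersection not shrinking to a point, not about a single finite composition being non-constant. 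To repair the proof you would need to abandon the uniformly contracting $f_1,f_2$ and instead build in a controlled expansion along a periodic backward tail, which is exactly the role of the $\mathcal{K}$-pair.
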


\begin{remark}\emph{
For step skew products $F$ as in Theorem \ref{mt.espinhodesconexo} the target set $\overline{A_{F}}$ is
 a finite union of disjoint intervals, see the proof of Theorem~\ref{mt.espinhodesconexo}. 
Recall that, in the case of uniformly contracting invertible maps, compact sets 
 that are Hutchinson attractors  are either a connected set or a totally disconnected set.
Hence $\overline{A_{F}}$ is not the Hutchinson attractor of any hyperbolic iterated function systems.
This shows that to understand the structure of graphs we need to
 go beyond the Hutchinson theory.}
 \end{remark}
%

Let us observe that Theorem~\ref{mt.espinhodesconexo} was partially motivated
by the questions posed in \cite{Yuri} as well as his constructions of $\mathcal{K}$-pairs, see 
Section~\ref{ss.kpairs}.

\begin{remark}
{\em{Kudryashov  poses  the following \cite[Question 2.9.3]{Ku2} about the fiber
structure of attractors: ``Does the likely limit set of  typical step skew products over a Markov shift intersect each fiber on a finite union of intervals and points?''. Theorem~\ref{mt.mixing} allows us to address this question in a perhaps more transparent way. In view of item (2) and the characterization of
the maximal attractor in Proposition~\ref{p.maximalattractor} we have that
$$
\overline{\mathrm{graph}\,\rho}= \bigcup_\vartheta \{\vartheta\} \times I_\vartheta^A,
\quad \mbox{where} \quad I_\vartheta^A\eqdef \bigcap_{n\ge 1} f_{\vartheta_{-1}} \circ  \cdots \circ f_{\vartheta_{-n}}(\overline{A_{F}}).
$$
Thus, to understand the fiber structure of $\overline{\mathrm{graph}\,\rho}$ it is enough to study the sets
$ I_\vartheta^A$.
} }
\end{remark}

\subsection{Attracting invariant  measures}
We  call $F$ a
step skew product over the measure-preserving shift map $(\Sigma_{k},\mathscr{F},\mathbb{P},\sigma)$ 
to emphasize that the base map $\sigma$ is a measure-preserving transformation of some probability 
space $(\Sigma_{k},\mathscr{F},\mathbb{P})$. The sigma-algebra $\mathscr{F}$  is always assumed to be
the product sigma-algebra $\mathscr{E}^{\mathbb{Z}}$, where $\mathscr{E}$ is the discrete sigma-algebra of  $\{1,\dots, k\}$. If $\mathbb{P}$ is a Markov measure (see Definition below) we say that $F$ 
is a step skew product over a Markov shift.

Associated to the coding map $\rho$
defined in  \eqref{e.varrhoproj} we consider the map 
\begin{equation}\label{ultst}
\hat\rho \colon S_{F}\to S_{F}\times M, \quad
\hat  \rho(\vartheta)\eqdef (\vartheta,\rho(\vartheta)),
\end{equation}
 whose image is the graph of the map $\rho$ (denoted by $\mathrm{graph\, \rho}$).

For the precise definitions of isomorphic  measure-preserving transformations
 see Definition~\ref{d.isomorphic}. In what follows, given a measure $\mu$ we denote by
  $\mathrm{supp}\, \mu$ its support.

\begin{mteo} [Global attractor of measures]
\label{mt.skewproduct}
Let $M$ be a compact metric space and let $F\colon \Sigma_{k}\times M\to  \Sigma_{k}\times M$ be a step skew product over a measure-preserving shift map  $(\Sigma_{k},\mathscr{F},\mathbb{P},\sigma)$.
Assume that $\mathbb{P}(S_{F})=1$. 
Then:
\begin{itemize}
\item[(1)] 
The measure $v_{\mathbb{P}}=\hat \rho_\ast \mathbb{P}$ is the 
  global attractor of 
  $F_{*|\mathcal{M}_{\mathbb{P}}}$. In particular, $F$
  has a unique invariant measure with marginal $\mathbb{P}$
  whose 
 disintegration 
with respect to $\mathbb{P}$ is the Dirac delta measure $\delta_{\rho(\cdot)}$.
\item[(2)]
$(F,v_{\mathbb{P}})$ is isomorphic to 
  $(\sigma,\mathbb{P})$. 
  \item[(3)]
  $\mathrm{supp}\, v_{\mathbb{P}}=\overline{\mathrm{graph} \,\,(\rho_{|\mathrm{supp}\,\mathbb{P}}})$.
\end{itemize}
\end{mteo}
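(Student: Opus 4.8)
The plan is to prove items (1)--(3) in turn, after checking that $v_{\mathbb{P}}$ makes sense. Since $\rho$ is a pointwise limit of maps depending on finitely many coordinates it is Borel, and $S_F$ is Borel (e.g.\ $S_F=\{\vartheta\colon D_n(\vartheta)\to 0\}$ with $D_n(\vartheta)\eqdef\mathrm{diam}\big(f_{\vartheta_{-1}}\circ\cdots\circ f_{\vartheta_{-n}}(M)\big)$, each $D_n$ depending on finitely many coordinates); hence $\hat\rho$ is Borel on the full-measure set $S_F$ and $v_{\mathbb{P}}=\hat\rho_\ast\mathbb{P}$ is well defined. From $\Pi\circ\hat\rho=\mathrm{id}_{S_F}$ we get $\Pi_\ast v_{\mathbb{P}}=\mathbb{P}$, so $v_{\mathbb{P}}\in\mathcal{M}_{\mathbb{P}}$; from the conjugacy $F\circ\hat\rho=\hat\rho\circ\sigma$ on $S_F$ (this is \eqref{e.conjugation}) together with $\sigma_\ast\mathbb{P}=\mathbb{P}$ we get $F_\ast v_{\mathbb{P}}=(F\circ\hat\rho)_\ast\mathbb{P}=(\hat\rho\circ\sigma)_\ast\mathbb{P}=\hat\rho_\ast\mathbb{P}=v_{\mathbb{P}}$; and the definition of $v_{\mathbb{P}}$ shows directly that its disintegration along $(\Pi,\mathbb{P})$ is $\vartheta\mapsto\delta_{\rho(\vartheta)}$.

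The core is the convergence \eqref{letacc}; it also yields the uniqueness claim in (1), since any $F_\ast$-invariant $\nu\in\mathcal{M}_{\mathbb{P}}$ satisfies $\nu=F_\ast^n\nu\to v_{\mathbb{P}}$. Fix $\nu\in\mathcal{M}_{\mathbb{P}}$, disintegrate $\nu=\int\nu_\vartheta\,d\mathbb{P}(\vartheta)$, and fix $\varphi\in C(\Sigma_k\times M)$. Since $F^n(\vartheta,x)=(\sigma^n\vartheta,f_\vartheta^n(x))$ and, by \eqref{eq:defcompo}, $f_\vartheta^n=f_{(\sigma^n\vartheta)_{-1}}\circ\cdots\circ f_{(\sigma^n\vartheta)_{-n}}$, both $f_\vartheta^n(x)$ and (when $\sigma^n\vartheta\in S_F$) $\rho(\sigma^n\vartheta)$ lie in $f_{(\sigma^n\vartheta)_{-1}}\circ\cdots\circ f_{(\sigma^n\vartheta)_{-n}}(M)$, a set of diameter $D_n(\sigma^n\vartheta)$ that does not depend on $x$. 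Given $\eps>0$, pick $\delta>0$ with $|\varphi(z)-\varphi(z')|<\eps$ whenever $d(z,z')<\delta$; then
\[
\Big|\int\varphi\,d(F_\ast^n\nu)-\int\varphi\big(\sigma^n\vartheta,\rho(\sigma^n\vartheta)\big)\,d\mathbb{P}(\vartheta)\Big|\ \le\ \eps+2\norm{\varphi}_\infty\,\mathbb{P}\big(\{\vartheta\colon D_n(\sigma^n\vartheta)\ge\delta\}\big).
\]
By $\sigma$-invariance of $\mathbb{P}$ one has $\mathbb{P}(\{D_n\circ\sigma^n\ge\delta\})=\mathbb{P}(\{D_n\ge\delta\})$, which tends to $0$ because $D_n(\vartheta)\downarrow 0$ for every $\vartheta$ in the full-measure set $S_F$; the same invariance gives $\int\varphi(\sigma^n\vartheta,\rho(\sigma^n\vartheta))\,d\mathbb{P}=\int\varphi(\vartheta,\rho(\vartheta))\,d\mathbb{P}=\int\varphi\,dv_{\mathbb{P}}$. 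Letting $n\to\infty$ and then $\eps\to 0$ proves $F_\ast^n\nu\to v_{\mathbb{P}}$, hence (1).

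For (2): the projection $\Pi$ is a measurable, measure-preserving map $(\Sigma_k\times M,v_{\mathbb{P}},F)\to(\Sigma_k,\mathbb{P},\sigma)$ with $\Pi\circ F=\sigma\circ\Pi$, and it is invertible mod $0$ because $\Pi$ is injective on $\mathrm{graph}\,\rho$, a set of full $v_{\mathbb{P}}$-measure, with measurable inverse $\hat\rho$; a measure-preserving factor map that is invertible mod $0$ is an isomorphism in the sense of Definition~\ref{d.isomorphic}. For (3) I would use that $\rho$ is continuous on $S_F$ (if $\vartheta,\vartheta'\in S_F$ agree on the coordinates $-1,\dots,-N$ then $\rho(\vartheta),\rho(\vartheta')$ both lie in $f_{\vartheta_{-1}}\circ\cdots\circ f_{\vartheta_{-N}}(M)$, whose diameter $D_N(\vartheta)\to 0$). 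For the inclusion $\supseteq$: given $\vartheta_0\in\mathrm{supp}\,\mathbb{P}\cap S_F$ and a basic neighbourhood $C\times B$ of $(\vartheta_0,\rho(\vartheta_0))$, continuity of $\rho|_{S_F}$ lets one shrink $C$ to a cylinder $C'\ni\vartheta_0$ with $\rho(C'\cap S_F)\subset B$, so $v_{\mathbb{P}}(C'\times B)\ge\mathbb{P}(C'\cap S_F)=\mathbb{P}(C')>0$; as $\mathrm{supp}\,v_{\mathbb{P}}$ is closed this gives $\overline{\mathrm{graph}(\rho|_{\mathrm{supp}\,\mathbb{P}})}\subset\mathrm{supp}\,v_{\mathbb{P}}$. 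For $\subseteq$: $\hat\rho^{-1}(\mathrm{graph}(\rho|_{\mathrm{supp}\,\mathbb{P}}))=S_F\cap\mathrm{supp}\,\mathbb{P}$ has full $\mathbb{P}$-measure, so $\mathrm{graph}(\rho|_{\mathrm{supp}\,\mathbb{P}})$ and hence its closure have full $v_{\mathbb{P}}$-measure, and $\mathrm{supp}\,v_{\mathbb{P}}$ is contained in every closed set of full measure.

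The main obstacle is the convergence step, and it is the only place where the hypothesis on $\mathbb{P}$ is used essentially: for Bernoulli $\mathbb{P}$ one would argue that $f_\vartheta^n$ has the same law as the reversed composition $f_{\vartheta_{-1}}\circ\cdots\circ f_{\vartheta_{-n}}$, but for a general $\sigma$-invariant $\mathbb{P}$ one must instead exploit that $D_n\circ\sigma^n$ and $D_n$ have the same distribution and that $D_n\to 0$ a.e.\ --- i.e.\ that fiberwise contraction happens with probability tending to $1$ --- which is precisely what replaces the contraction hypotheses of Stark's and Letac's theorems. The remaining ingredients (existence of disintegrations over the compact metric base, Borel-isomorphism theory, $F_\ast$-invariance of $\mathcal{M}_{\mathbb{P}}$ from Proposition~\ref{p.marginal}) are routine and I would only indicate them.
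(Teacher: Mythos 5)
Your proposal is correct and follows essentially the same route as the paper: the same reversed-composition mechanism (pushing the $\sigma$-invariance of $\mathbb{P}$ through the identity $f_\vartheta^n(M)=f_{\vartheta_{-1}}\circ\cdots\circ f_{\vartheta_{-n}}(M)\circ\sigma^n$) for the convergence to $v_{\mathbb{P}}$, the same graph conjugacy $\hat\rho$ for item (2), and the same two-inclusion argument for item (3). The only difference is presentational: the paper isolates the a.e.\ pointwise convergence of the fiber disintegrations $(F_*^n\mu)_\vartheta\to\delta_{\rho(\vartheta)}$ as a separate proposition (via its Lemmas~\ref{l.c.l.eraumclaim} and~\ref{l.interessante}) and then invokes dominated convergence, whereas you bound the integrals directly by uniform continuity of $\varphi$ together with $\mathbb{P}(D_n\circ\sigma^n\geq\delta)=\mathbb{P}(D_n\geq\delta)\to 0$.
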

 
 \begin{remark}\label{r.stark}
 {\em{
 Note that in setting considered by Stark in \cite{Stark} with negative maximal Lyapunov exponent\footnote{The {\emph{maximal Lyapunov exponent}} is the rate of growth of the Lipschitz constant of the compositions $f_\vartheta^n$.} one has $\mathbb{P}(S_F)=1$.
 Thus Theorem~\ref{mt.skewproduct} holds in this setting. As far as we know, the study of ergodic properties for these skew products is still quite incipient and there are very few available results, see \cite{SS} and also \cite{FQ} for some specific cases. We refer to \cite{Keller} for a bifurcation setting.}}
 \end{remark}

 For step skew products whose fiber maps are defined on a subset $M$ of $\mathbb{R}^{m}$  
  Theorem \ref{mt.skewproduct} can be improved for a certain class of step skew products over a Markov shift having a splitting property. 
As stated below, under these conditions, the convergence to the attracting measure  in Theorem~\ref{mt.skewproduct} is exponentially fast.

  More precisely, consider the {\emph{Wasserstein metric}} in $\mathcal{M}_{1}(\Sigma_{k}\times M)$ defined by
 \begin{equation}\label{Wass}
  d_W (\mu, \nu) \eqdef\sup \left\{ \left|\int \phi\, d\mu - \int \phi\, d\nu \right|,\, \, \phi \in \mathrm{Lip}_1 (  \Sigma_k\times M) \right\},
  \end{equation}
 where $\mathrm{Lip}_1 (\Sigma_k\times M)$ denotes the space of Lipschitz maps with Lipschitz constant one 
 ($\Sigma_k\times M$ is endowed with the canonical metric, see Section \ref{ss.preliminaries}).
Observe that  this  metric generates the weak-$*$ topology in 
$\mathcal{M}_{1}(\Sigma_{k}\times M)$
see \cite{dW}.
 
Further, recall that
a $k\times k$ matrix  $P=(p_{ij})$ is a 
\emph{transition matrix} if 
$p_{ij}\ge 0$ for all $i,j$ and 
for every $i$ it holds
$\sum_{j=1}^{k}p_{ij}=1$.
A \emph{stationary probability vector} associated to $P$  is a 
  vector $\bar p=(p_{1},\ldots,p_{k})$ whose elements are 
  non-negative real numbers, sum up to $1$, and satisfies $\bar p \, P= \bar p$. 
  Given a transition matrix $P$  and a stationary probability vector $\bar p$, there is a unique probability measure $\mathbb{P}$ 
on $\Sigma_{k}$ such that the sequence of coordinate mappings on $\Sigma_{k}$
is a homogeneous Markov chain with probability transition 
$P$ and starting probability vector $\bar p$.  For details see, e.g.,
\cite[Chapter 1]{Revuz}.
 The measure $\mathbb{P}$ is the \emph{Markov measure} associated to the pair $(P,\bar{p})$.
  The measure $\mathbb{P}$ is $\sigma$-invariant and the measure-preserving dynamical systems $(\Sigma_{k},\mathscr{F},\mathbb{P},\sigma)$ is a \emph{Markov shift}.
  A Markov shift $(\Sigma_{k},\mathscr{F},\mathbb{P},\sigma)$ 
with transition matrix $P=(p_{ij})$
is called \emph{irreducible} if  
for every $\ell,r\in \{1,\dots,k\}$ there is $n=n(\ell,r)$ such that
$P^{n}=(p^n_{ij})$ satisfies $p^n_{\ell,r}>0$.
We observe that an irreducible transition matrix has a unique positive stationary
probability vector $\bar p=(p_{i})$, see 
\cite[page 100]{Kemeny}.
 Finally, recall that a sequence $(a_{1}\dots a_{\ell})\in \{1,\dots,k\}^{\ell}$ is \emph{admissible} with respect to $P=(p_{ij})$ if $p_{a_{i}a_{i+1}}>0$ for every $i=1,\dots, \ell-1$. 
 
\begin{defi}[Splitting condition]
\label{splitgene2}\emph{
Let $F$ be a step skew product over 
 a Markov shift $(\Sigma_k, \mathscr{F}, \mathbb{P}, \sigma)$ with fiber maps $f_{1},\dots ,f_{k}\colon M\to M$.
 Consider
the projections $\pi_s \colon M \to \mathbb{R}$ given by
$\pi_s(x_1,\dots, x_m)\eqdef x_s$, where $s\in \{1,\dots, m\}$. We say that $F$
 \emph{splits} if the are $\mathbb{P}$-admissible sequences
$(a_{1}\dots a_{\ell})$ and $(b_{1}\dots b_{r})$ with $a_\ell=b_r$
such that the sets $M_{1}\eqdef
 f_{a_{\ell}}\circ \dots \circ f_{a_{1}}(M)$ and $M_{2}\eqdef f_{b_{r}}\circ \dots \circ f_{b_{1}}(M)$ satisfy
$$
\pi_{s}(f_{\vartheta}^{n}(M_{1}))\cap \pi_{s}(f_{\vartheta}^{n}(M_{1}))=\emptyset,
$$
for every $\vartheta$, every $n$, and every $s$.}
\end{defi}

\begin{mteo}[Exponential convergence to the attracting measure]\label{exponentialdw}
Let $M\subset \mathbb{R}^{m}$ be a compact subset and
 $F\colon \Sigma_{k}\times M\to \Sigma_{k}\times M$ be a step skew product over 
 an irreducible Markov shift $(\Sigma_k, \mathscr{F}, \mathbb{P}, \sigma)$. Suppose that $F$ splits. Let $P=(p_{ij})$ be the corresponding transition matrix 
and suppose that there is $u$ such that $p_{uj}>0$ for every $j$.
Then  
  \begin{itemize}
  \item[(1)]
  The coding map $\rho$ is defined $\mathbb{P}$-almost everywhere, i.e., $\mathbb{P}(S_{F})=1$. 
  \item[(2)]
  There are $\lambda\in (0,1)$ and $N\ge 0$ such that
    for every probability measure $\mu \in  \mathcal{M}_\mathbb{P}  (\Sigma_{k}\times M) $
   we have
  $$
 d_{W}(F_{*}^{n}\mu, 
 \hat \rho_{*}\mathbb{P}
 )\le m\, \lambda^n, \quad \mbox{for every $n\ge N$}.
 $$
 \item [(3)]
 There are $q\in (0,1)$ and $N\ge 0$ such that
for $\mathbb{P}$-almost every $\vartheta\in \Sigma_k$ 
there is $C(\vartheta)>0$ such that
$$
d(F^{n}(\vartheta,x),\hat\rho(\sigma^{n}(\vartheta)))\le C(\vartheta) q^n,
\quad \mbox{for every $n\ge N$ and  every $x\in M$}.
$$
\end{itemize}  
\end{mteo}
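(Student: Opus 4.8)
The plan is to reduce everything to a uniform contraction estimate along fibers and then transfer it to the level of measures. First I would address item (1): by \cite{DiazMatias}, the splitting property implies $\mathbb{P}(S_F)=1$, so item (1) is immediate from the hypotheses, and Theorem~\ref{mt.skewproduct} then gives the existence of the attracting measure $v_{\mathbb{P}} = \hat\rho_\ast\mathbb{P}$. The heart of the matter is item (3), from which item (2) follows easily. For item (3) I would fix the index $u$ with $p_{uj}>0$ for all $j$, and fix the two $\mathbb{P}$-admissible words $(a_1\dots a_\ell)$, $(b_1\dots b_r)$ with $a_\ell = b_r$ witnessing the splitting, giving compact sets $M_1, M_2 \subset M$ with $\pi_s(f^n_\vartheta(M_1)) \cap \pi_s(f^n_\vartheta(M_2)) = \emptyset$ for all $\vartheta$, $n$, $s$. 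The key geometric observation is that, because $M\subset\mathbb{R}^m$ is compact, for each coordinate $s$ the quantity $\mathrm{diam}\,\pi_s(f^n_\vartheta(M))$ is nonincreasing in $n$ (the images are nested after prepending letters) and, crucially, strictly decreases by a definite factor whenever the orbit of $\vartheta$ realizes a block that forces a pass through both $M_1$ and $M_2$: if at some time the two pieces $\pi_s(f^j_\vartheta(M_1))$ and $\pi_s(f^j_\vartheta(M_2))$ are disjoint subintervals of $\pi_s(f^j_\vartheta(M))$, then their union has diameter at most $\mathrm{diam}\,\pi_s(f^j_\vartheta(M))$ minus the gap, so the next time the full image $\pi_s(f^{j'}_\vartheta(M))$ collapses into a proper subinterval.

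Next I would make this quantitative via a renewal/Borel–Cantelli argument driven by the Markov chain. Because $P$ is irreducible and $p_{uj}>0$ for all $j$, there is a fixed block $w$ (a concatenation of: a path into the letter $u$, then the reversed words defining $M_1$ and $M_2$ appropriately arranged, then a return) of some fixed length $L$ and fixed positive $\mathbb{P}$-probability $\delta>0$ such that every occurrence of $w$ in $\vartheta_{-n}\dots\vartheta_{-1}$ forces, for every coordinate $s$ simultaneously, a multiplicative contraction of $\mathrm{diam}\,\pi_s(f^n_\vartheta(M))$ by a factor $\tau<1$ depending only on the minimal gap in the splitting and on $\mathrm{diam}\,M$. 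By stationarity and the strong Markov property, for $\mathbb{P}$-a.e.\ $\vartheta$ the number $N_n(\vartheta)$ of disjoint occurrences of $w$ among the first $n$ symbols grows linearly: $N_n(\vartheta)\ge cn$ for $n$ large, with $c>0$ deterministic (this is where I would invoke the ergodic theorem, or a direct large-deviations estimate for the number of returns, to get $\mathbb{P}(N_n < cn) $ summable, hence a random threshold $N(\vartheta)$ and constant $C(\vartheta)$). Consequently $\mathrm{diam}\,\pi_s(f^n_\vartheta(M)) \le C(\vartheta)\,\tau^{cn}$, and since $I^A_{\sigma^n\vartheta} \ni \rho(\sigma^n\vartheta)$ and $f^n_\vartheta(x)\in f^n_\vartheta(M)$, taking $q = \tau^c$ and summing over the $m$ coordinates yields $d(F^n(\vartheta,x), \hat\rho(\sigma^n\vartheta)) \le \sqrt{m}\,C(\vartheta)\,q^n$ (adjusting $C$). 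This proves item (3).

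For item (2) I would integrate item (3). Given $\mu\in\mathcal{M}_\mathbb{P}(\Sigma_k\times M)$ and $\phi\in\mathrm{Lip}_1(\Sigma_k\times M)$, disintegrate $\mu = \int \mu_\vartheta\, d\mathbb{P}(\vartheta)$; then $F^n_\ast\mu$ and $\hat\rho_\ast\mathbb{P}$ both have marginal $\mathbb{P}$, and the cost of coupling $F^n_\ast\mu$ with $\hat\rho_\ast\mathbb{P}$ above a fixed base point $\eta=\sigma^n\vartheta$ is at most $\int d(F^n(\vartheta,x),\hat\rho(\sigma^n\vartheta))\,d\mu_\vartheta(x)\,d\mathbb{P}(\vartheta)$, which by item (3) is at most $\int C(\vartheta)\,q^n\, d\mathbb{P}(\vartheta)$ once $n\ge N$; controlling $\int C(\vartheta)\,d\mathbb{P}$ requires that the random threshold estimate be uniform enough that $C\in L^1(\mathbb{P})$, which I would arrange by taking $C(\vartheta)$ to be $\mathrm{diam}\,M$ times a fixed power of $\tau^{-1}$ times $\tau^{-N(\vartheta)c}$ with $N(\vartheta)$ having exponential tails. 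Choosing $\lambda\in(q^{1/2},1)$ (or simply $\lambda$ slightly larger than $q$) absorbs the $L^1$ constant and the factor $m$ into the statement $d_W(F^n_\ast\mu, \hat\rho_\ast\mathbb{P})\le m\lambda^n$ for $n\ge N$.

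The main obstacle is the probabilistic step: upgrading ``$w$ occurs infinitely often'' to ``$w$ occurs at least $cn$ times in the first $n$ symbols, with an exponentially small failure probability, uniformly enough to get an $L^1$ (indeed exponentially integrable) random constant $C(\vartheta)$''. For a Bernoulli measure this is a standard Chernoff bound on a sum of i.i.d.\ indicators (after decimating into independent blocks of length $L$); for a general irreducible Markov measure one needs the corresponding large-deviation bound for the number of visits of the chain to a recurrent block, which follows from the spectral gap of the transfer operator of the finite-state chain (or from a regeneration argument). Everything else — the monotonicity of fiber diameters, the per-block contraction factor from the geometric splitting, and the coupling bound for $d_W$ — is routine once this is in place.
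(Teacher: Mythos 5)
Your plan diverges from the paper's at the key technical step, and the divergence hides a genuine gap. You claim that whenever the block $w$ occurs in $\vartheta_{-n}\dots\vartheta_{-1}$, the diameter $\mathrm{diam}\,\pi_s(f^n_\vartheta(M))$ contracts by a \emph{uniform} multiplicative factor $\tau<1$ ``depending only on the minimal gap in the splitting.'' This does not follow from Definition~\ref{splitgene2}. The splitting hypothesis gives only that $\pi_s(f^n_\vartheta(M_1))$ and $\pi_s(f^n_\vartheta(M_2))$ are disjoint; there is no control on the relative sizes of $\pi_s(f^n_\vartheta(M_1))$, $\pi_s(f^n_\vartheta(M_2))$, and the gap inside $\pi_s(f^n_\vartheta(M))$. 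An occurrence of (say) the $M_1$-block forces $\pi_s(f^n_\vartheta(M))$ to factor through the subset $\pi_s(f^{n'}_{\vartheta}(M_1))$, which is a proper subset of $\pi_s(f^{n'}_{\vartheta}(M))$, but ``proper'' is not ``proper by a definite factor'' --- the subinterval could occupy $99\%$ of the ambient interval at one occurrence and $1\%$ at another. So the renewal/Chernoff machinery you propose to build on top of that per-block contraction has nothing solid to stand on, and moreover the follow-up step (showing the random constant $C(\vartheta)$ is integrable so that item~(2) follows by integrating item~(3)) would require exponential tail control of exactly the quantity whose deterministic-per-occurrence control is the thing that fails.

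The paper sidesteps all of this by working in \emph{expectation}. The splitting condition is converted (via \cite[Lemma~6.1]{DiazMatias}) into a pointwise-in-$x$ probability bound $\mathbb{P}(x\in I^s_n)\le\lambda^n$ --- for a fixed target point $x$, the probability that a random $n$-fold composition still covers $x$ decays geometrically, because $x$ cannot simultaneously lie in the disjoint images through $M_1$ and $M_2$. A single Fubini exchange then yields $\int\mathrm{diam}(f^n_\vartheta(M))\,d\mathbb{P}\le m\lambda^n$ (Proposition~\ref{p.fastfast}). Item~(1) follows by extracting an a.e.~convergent subsequence and using nestedness; item~(2) follows \emph{directly} from the $L^1$ bound by a Lipschitz-test-function computation, with no detour through item~(3); and item~(3) follows by applying monotone convergence to $\sum_n\mathrm{diam}(f^n_\vartheta(M))/q^n$ for any $q\in(\lambda,1)$, which gives an a.e.~finite sum and hence the random constant $C(\vartheta)$ for free. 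The expectation route both avoids the false geometric claim and eliminates the need for any Markov-chain large-deviation input: you never need $w$ to occur linearly often, only that the \emph{average} diameter decays, and that is what the Dubins--Freedman-type probability estimate delivers.
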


Let us observe that the second item of Theorem~\ref{exponentialdw} is a result in the same spirit as the one in 
\cite{BhLe}
in  the context of
i.i.d. random products. In \cite{BhLe},
assuming also a splitting-like condition, it is proved  that the Markov operator has  a 
unique stationary measure that attracts 
 probability measures
exponentially fast.

\begin{example}[$m$ dimensional examples]
\label{e.msplits}
{\em{
Consider a pair of  increasing\footnote{This means that for every $(x_1,\dots,x_m)\in [0,1]^m$, every 
$s,i\in \{1, \dots, m\}$, and every $j\in \{1,2\}$, the map $x\mapsto \pi^s (f_j(x_1,\dots, x_{i-1},x,x_{i+1},\dots,x_m))$ is increasing.}
 continuous maps $f_1,f_2\colon [0,1]^m \to [0,1]^m$ such that
\begin{itemize}
\item
$(0^m)$ is a fixed point of $f_1$ which is a global attractor of $f_1$ in $[0,1]^m\setminus \{(1^m)\}$,
\item
$f_2$ is an uniform contraction whose fixed point $x_1$ is in $(0,1)^m$, and
\item
$\pi_s(x_1) \ne \pi_s( f_1(x_1))$ for every $s=1,\dots,m$.
\end{itemize}
Note that  the fiber maps have a  ``mixed'' behavior  with intermingled regions of
 expansion and contraction.

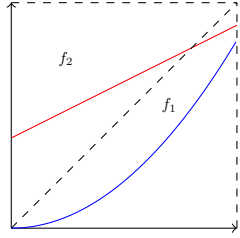
\begin{figure}[h!]
\centering
\begin{tikzpicture}[xscale=3,yscale=3]
 \node[scale=0.6,left] at (0.3,0.75) {$f_{2}$};
  \node[scale=0.6,below] at (0.7,0.6) {$f_{1}$};
\draw[->] (0,0)-- (1,0);
\draw[->] (0,0)--(0,1);
\draw[red,-] (0,0.4)--(1,0.9);
\draw[dashed] (0,1)--(1,1);
\draw[dashed] (0,0)--(1,1);
\draw[dashed] (1,0)--(1,1);
 \draw[scale=1.2,domain=0:0.83,smooth,variable=\x,blue] plot ({\x},{\x*\x});
\end{tikzpicture}
 \caption{The fiber maps $f_1$ and $f_2$ of Example~\ref{e.msplits} for $m=1$.}
 \label{f.splits}
\end{figure}

 Consider a Markov measure $\mathbb{P}$ whose  transition matrix $P$ is of the form
\begin{equation}
\label{e.matriztran}
 \left( \begin{matrix} p_{11} & 1-p_{11}\\p_{21} & 1-p_{21} 
\end{matrix}
\right),
\quad
p_{11} \in [0,1], \, p_{21}\in(0,1).
\end{equation}
Then the skew product $F\colon \Sigma_2 \times [0,1]^m \to \Sigma_2 \times [0,1]^m$ splits (with respect $\mathbb{P}$), hence the conclusions of Theorem~\ref{exponentialdw} hold.

To see that $F$ splits
note that $f_2^k ([0,1]^m)\to \{x_1\}$. Hence, for sufficiently large $k$, the sets
$f_1\circ f_2^k([0,1]^m)$ and $f_2\circ f_2^k([0,1]^m)$ are disjoint. 
Thus $f_2\circ f_1\circ f_2^k([0,1]^m)$ and $f_2^{k+2}([0,1]^m)$ are also disjoint. 
To conclude observe that the finite 
sequences  $(212\dots 2),(2\dots 2)\in\{1,2\}^{k+2}$ are both admissible and that the fiber maps $f_1,f_2$ are increasing 
monotones.

If the transition matrix is of the form
$$
 \left( \begin{matrix} p_{11} & 1-p_{11}\\0 & 1 
\end{matrix}
\right)
$$
the previous construction holds if, for instance, $f_1\circ f_2$ is a uniform contraction whose  fixed point is in
$(0,1)^m$. More complicated cases can be considered by using the arguments in \cite[Section 2.4.2]{DM} dealing with ``monotone maps'' in $\mathbb{R}^m$.

We observe that in the one dimensional case  different ``geometrical' configurations'' 
for the fiber maps can be considered, see
the discussion in Section~\ref{ss.examples}.
}}
\end{example}

 \subsection{Milnor attractors}
 \label{ss.milnor}

We now see some applications of our results to the study of Milnor attractors.  For that we need some definitions.

Consider a metric space $M$, a probability measure $ \mathfrak{s}$ on $M$,  
and a continuous map $f\colon M\to M$.
The
   \emph{realm of attraction}
of a set $B \subset M$, denoted by $\delta(B)$, is the set  of points 
 $z$ such that $\omega_{f}(z)\subset B$. 
 A closed set $X$ is a \emph{Milnor attractor} (with respect to $f$ and $\mathfrak{s}$)
 if it satisfies 
$ \mathfrak{s}(\delta(X))>0$
 and
 $\mathfrak{s}(\delta(Y))<\mathfrak{s}(\delta(X))$
 for every closed set $Y$ with $Y\subsetneq X$. 

 \begin{mcoro}\label{c.milnor1}
  Let $F$ be as in Theorem~\ref{mt.espinhodesconexo}.  
 Consider an ergodic measure $\mathbb{P}$ fully supported on $\Sigma_k$
 and a measure $\mu$ with marginal   $\mathbb{P}$.
 Then 
 $\overline{\mathrm{graph}\,\rho}$ is the unique Milnor attractor of $F$ with respect to $\mu$.
 \end{mcoro}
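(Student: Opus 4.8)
The plan is to derive Corollary~\ref{c.milnor1} from Theorem~\ref{mt.espinhodesconexo} and Theorem~\ref{mt.skewproduct} (whose hypothesis $\mathbb{P}(S_F)=1$ we must first check) by a two-sided inclusion argument on realms of attraction. First I would record what Theorem~\ref{mt.espinhodesconexo}(1) gives us: for $F\in\mathcal{S}_m$ and any disjunctive point $z$ we have $\omega_F(z)=\overline{\mathrm{graph}\,\rho}$. Since $\mathbb{P}$ is fully supported on $\Sigma_k=\{1,\dots,k\}^{\mathbb{Z}}$ and ergodic for $\sigma$, $\mathbb{P}$-almost every $\vartheta$ is disjunctive (the set of disjunctive sequences is a dense $G_\delta$ of full measure: a sequence is disjunctive iff its $\sigma$-orbit meets every cylinder, and each ``never hits cylinder $C$'' event has measure zero by ergodicity since cylinders have positive measure). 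Hence the set of disjunctive points $z=(\vartheta,x)$ has full $\mu$-measure for any $\mu$ with marginal $\mathbb{P}$, because $\Pi_*\mu=\mathbb{P}$ and disjunctiveness depends only on $\vartheta$. For all these points $\omega_F(z)=\overline{\mathrm{graph}\,\rho}$, so in particular $\omega_F(z)\subset\overline{\mathrm{graph}\,\rho}$, giving $\mu(\delta(\overline{\mathrm{graph}\,\rho}))=1$.

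Next I would verify that $\overline{\mathrm{graph}\,\rho}$ is minimal with respect to the defining property of a Milnor attractor, i.e.\ that no proper closed subset $Y\subsetneq\overline{\mathrm{graph}\,\rho}$ has $\mu(\delta(Y))\ge\mu(\delta(\overline{\mathrm{graph}\,\rho}))=1$. Suppose such a $Y$ existed. Then $\mu(\delta(Y))=1$, so there is a point $z=(\vartheta,x)$ that is disjunctive and lies in $\delta(Y)$, meaning $\omega_F(z)\subset Y$. But $z$ disjunctive forces $\omega_F(z)=\overline{\mathrm{graph}\,\rho}$ by Theorem~\ref{mt.espinhodesconexo}(1), so $\overline{\mathrm{graph}\,\rho}\subset Y$, contradicting $Y\subsetneq\overline{\mathrm{graph}\,\rho}$. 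This simultaneously shows $\overline{\mathrm{graph}\,\rho}$ \emph{is} a Milnor attractor (closed, $\mu(\delta)>0$, and every proper closed subset has strictly smaller realm) and that it is the \emph{unique} one: if $X'$ is any Milnor attractor with respect to $\mu$, then $\mu(\delta(X'))>0$, so $\delta(X')$ contains a disjunctive point $z'$, whence $\overline{\mathrm{graph}\,\rho}=\omega_F(z')\subset X'$; were this inclusion strict, $\overline{\mathrm{graph}\,\rho}$ would be a proper closed subset of $X'$ with full realm $\mu(\delta(\overline{\mathrm{graph}\,\rho}))=1\ge\mu(\delta(X'))$, violating the minimality in the definition of Milnor attractor for $X'$; hence $X'=\overline{\mathrm{graph}\,\rho}$.

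A small technical point I would address along the way: I need $\mu(\delta(\overline{\mathrm{graph}\,\rho}))>0$ literally, which follows from the above since it equals $1$; and I should note that $\overline{\mathrm{graph}\,\rho}$ is closed by construction, and $F$-invariant by \eqref{e.conjugation}, so it is a legitimate candidate. I should also confirm measurability of $\delta(X)$ for closed $X$ (a standard fact, since $\omega_F(z)\subset X$ can be written as a countable intersection/union of closed conditions on the orbit), but this is routine. The main obstacle — really the only nontrivial input — is the passage from ``$\mathbb{P}$ fully supported and ergodic'' to ``$\mathbb{P}$-a.e.\ sequence is disjunctive'', together with lifting this to full $\mu$-measure of disjunctive points; everything else is a formal manipulation of the definition of Milnor attractor against the dichotomy supplied by Theorem~\ref{mt.espinhodesconexo}(1). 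Note that Theorem~\ref{mt.skewproduct} is not strictly needed for the statement as phrased, but one could alternatively invoke it to identify $\mu$-a.e.\ forward asymptotics with $v_{\mathbb{P}}$; I would mention this but base the proof on the cleaner topological argument above.
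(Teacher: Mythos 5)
Your argument is correct and follows the same route as the paper: reduce to showing the set of disjunctive points has full $\mu$-measure (the paper does this via disintegration, you do it by noting disjunctiveness depends only on $\vartheta$ and $\Pi_*\mu=\mathbb{P}$ — the same fact), then invoke Theorem~\ref{mt.espinhodesconexo}(1). Your explicit unpacking of ``ergodic + full support $\Rightarrow$ $\mathbb{P}(D)=1$'' and your careful formal check of the Milnor minimality/uniqueness conditions just fill in details the paper leaves implicit.
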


 To see why the corollary  is so let
$D$ be the space of disjunctive 
 sequences of $\Sigma_k$ and $S\eqdef D\times M$. 
 Consider the disintegration  $(\mu_\vartheta)_{\vartheta \in \Sigma_k}$ of  $\mu$ and 
 observe that
 $$
 \mu (S)= \int_{\Sigma_k}   \mu_\vartheta (S^\vartheta)\,  d \mathbb{P} (\vartheta),
\quad
\mbox{where $S^\vartheta=\{ x\in M \colon (\vartheta,x)\in S\}$}.
$$
 Note that $S^\vartheta=M$ if $\vartheta$ is disjunctive and that  $\mathbb{P}(D)=1$ (by ergodicity of
 $\mathbb{P}$).   Thus $ \mu (S)=\int_{D}   d \mathbb{P} (\vartheta)= \mathbb{P}(D)=1$.
Item (1) of Theorem~\ref{mt.espinhodesconexo} implies that $\overline{\mathrm{graph}\,\rho}=
\omega_F(z)$ for every $z\in S$.
 These two claims imply that
  $\overline{\mathrm{graph}\,\rho}$ is the unique Milnor attractor (with respect to $F$ and $\mu$).

 \begin{mcoro}\label{c.milnor2}
 Consider $F$ and $\mathbb{P}$ 
 as in  Theorem~\ref{exponentialdw} and 
  a measure $\mu$ with marginal   $\mathbb{P}$.
%
 Then $\overline{\mathrm{graph}\,\rho}$ is the unique Milnor attractor (with respect $F$ and $\mu$).
 \end{mcoro}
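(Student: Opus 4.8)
The plan is to reduce everything to the single assertion that $\omega_F(\vartheta,x)=\overline{\mathrm{graph}\,\rho}$ for $\mu$-almost every $(\vartheta,x)$, exactly as in the argument following Corollary~\ref{c.milnor1}. Since $\mu$ has marginal $\mathbb{P}$ and $\mathbb{P}$ need not be fully supported, $\overline{\mathrm{graph}\,\rho}$ has to be read here as $X\eqdef\mathrm{supp}\,v_\mathbb{P}$, which by Theorem~\ref{mt.skewproduct}(3) equals $\overline{\mathrm{graph}\,(\rho_{|\mathrm{supp}\,\mathbb{P}})}$ (and coincides with $\overline{\mathrm{graph}\,\rho}$ when $\mathbb{P}$ is fully supported). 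Note that $F(\mathrm{supp}\,\mathbb{P}\times M)\subset\mathrm{supp}\,\mathbb{P}\times M$, so for $\mu$-a.e.\ point the whole forward orbit, and hence its $\omega$-limit, stays in $\mathrm{supp}\,\mathbb{P}\times M$.

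By Theorem~\ref{exponentialdw}(1),(3) there is a full $\mathbb{P}$-measure set $W\subset S_F$ of sequences $\vartheta$ for which $d(F^n(\vartheta,x),\hat\rho(\sigma^n(\vartheta)))\to 0$ uniformly in $x\in M$; shrinking $W$ (still of full measure) we may also assume that every $\vartheta\in W$ is $\sigma$-generic, i.e.\ has dense forward $\sigma$-orbit in $\mathrm{supp}\,\mathbb{P}$, which is legitimate because the irreducible Markov shift $(\sigma,\mathbb{P})$ is ergodic. Fix $\vartheta\in W$. Since $S_F$ is forward $\sigma$-invariant, $\hat\rho(\sigma^n(\vartheta))\in\mathrm{graph}\,\rho$ for all $n$, and the uniform convergence forces $\omega_F(\vartheta,x)=L(\vartheta)$ for every $x\in M$, where $L(\vartheta)\eqdef\bigcap_{n\ge0}\overline{\{\hat\rho(\sigma^m(\vartheta)):m\ge n\}}\subset X$ does not depend on $x$.

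It remains to prove $X\subset L(\vartheta)$, and this is the only non-formal step. Let $\eta\in\mathrm{supp}\,\mathbb{P}\cap S_F$; by genericity there are $n_j\to\infty$ with $\sigma^{n_j}(\vartheta)\to\eta$, say with $\sigma^{n_j}(\vartheta)$ and $\eta$ agreeing on the coordinates $-m_j,\dots,-1$, $m_j\to\infty$. Both $\rho(\sigma^{n_j}(\vartheta))$ and $\rho(\eta)$ lie in the compact set $f_{\eta_{-1}}\circ\cdots\circ f_{\eta_{-m_j}}(M)$, whose diameter tends to $0$ because $\eta\in S_F$; hence $\rho(\sigma^{n_j}(\vartheta))\to\rho(\eta)$, so $\hat\rho(\sigma^{n_j}(\vartheta))\to\hat\rho(\eta)$ and $\hat\rho(\eta)\in L(\vartheta)$. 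Thus $L(\vartheta)\supset\mathrm{graph}\,(\rho_{|\mathrm{supp}\,\mathbb{P}\cap S_F})$, and since $L(\vartheta)$ is closed, $L(\vartheta)\supset\overline{\mathrm{graph}\,(\rho_{|\mathrm{supp}\,\mathbb{P}\cap S_F})}=X$. Combined with the previous paragraph, $\omega_F(\vartheta,x)=X$ for every $\vartheta\in W$ and every $x$; disintegrating $\mu$ along $\mathbb{P}$ exactly as in the argument following Corollary~\ref{c.milnor1}, $\omega_F(z)=X$ for $\mu$-a.e.\ $z$.

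From this the corollary follows formally. First, $\mu(\delta(X))=1>0$, and for every closed $Y\subsetneq X$ one has $X\not\subset Y$, so $\{z:\omega_F(z)\subset Y\}$ is $\mu$-null and $\mu(\delta(Y))=0<1=\mu(\delta(X))$; hence $X$ is a Milnor attractor for $(F,\mu)$. For uniqueness, if $X'$ is any Milnor attractor then $\mu(\delta(X'))>0$, so there is $z$ with $\omega_F(z)\subset X'$ and $\omega_F(z)=X$, whence $X\subset X'$; minimality of $X'$ then forbids $X\subsetneq X'$ (that would give $\mu(\delta(X))<\mu(\delta(X'))\le1=\mu(\delta(X))$), so $X'=X$. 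The main obstacle is the inclusion $X\subset L(\vartheta)$ above, which rests on the elementary but crucial fact that $\rho$ is ``continuous along $S_F$'', with modulus controlled by the diameters $\mathrm{diam}\big(f_{\eta_{-1}}\circ\cdots\circ f_{\eta_{-m}}(M)\big)\to0$; the exponential rate in Theorem~\ref{exponentialdw} is not needed for this corollary, only the qualitative statement~(3).
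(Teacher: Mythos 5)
Your proof is correct and follows the same overall strategy as the paper (use item (3) of Theorem~\ref{exponentialdw} to identify the $\omega$-limit of $\mu$-a.e.\ point, then disintegrate $\mu$ over $\mathbb{P}$ as in Corollary~\ref{c.milnor1}), but it is noticeably more careful than the paper's two-sentence argument on two points. First, the paper asserts that item (3) directly gives $\omega_F(z)=\overline{\mathrm{graph}\,\rho}$ for $\mathbb{P}$-a.e.\ $\vartheta$ and all $x$; what item (3) actually gives is $\omega_F(\vartheta,x)=L(\vartheta)$, the $\omega$-limit of the sequence $\hat\rho(\sigma^n\vartheta)$, and one still needs an argument for the lower bound $X\subset L(\vartheta)$. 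The intended route in the paper is presumably Lemma~\ref{l.densedisjunctive} (disjunctivity of a.e.\ $\vartheta$), but you supply an independent and cleaner argument: density of the forward orbit of $\vartheta$ in $\mathrm{supp}\,\mathbb{P}$ plus ``continuity of $\rho$ along $S_F$'' via the shrinking diameters $\mathrm{diam}(f_{\eta_{-1}}\circ\cdots\circ f_{\eta_{-m}}(M))\to 0$, which is exactly what $\eta\in S_F$ guarantees. Second, you correctly point out that when the irreducible Markov measure $\mathbb{P}$ is not fully supported (which Theorem~\ref{exponentialdw} permits), a.e.\ $\vartheta$ is generic only in $\mathrm{supp}\,\mathbb{P}$ and the Milnor attractor is $\mathrm{supp}\,v_\mathbb{P}=\overline{\mathrm{graph}\,(\rho_{|\mathrm{supp}\,\mathbb{P}})}$, not $\overline{\mathrm{graph}\,\rho}$; your reading is the one consistent with Theorem~\ref{mt.skewproduct}(3), and the paper's statement should be read this way. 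The concluding Milnor-attractor bookkeeping (that a set which is the exact $\omega$-limit of a full-measure set of points is the unique Milnor attractor) is standard and your version is complete.
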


  By the third item
 in Theorem~\ref{exponentialdw},
  there is
  $\Omega\subset \Sigma_k$ with $\mathbb{P}(\Omega )=1$ such that
  $\omega_F(z)= \overline{\mathrm{graph}\,\rho}$ for all $z\in \Omega \times M$.
  Arguing as in the proof of Corollary~\ref{c.milnor1},  we get $\mu (\Omega \times M)=1$ and thus 
   $\overline{\mathrm{graph}\,\rho}$ is the unique Milnor attractor of $F$ with respect $\mu$.

\medskip

 Special cases of the corollaries above are the product measures $\mu=\mathbb{P}\times \mathfrak{m}$,
 where $\mathfrak{m}$ is  any probability measure on $M$.

\subsection{Prototypical examples}
\label{ss.examples}
We now explain a class of examples that motivate the ones in this paper and also provide families of 
``non-contracting'' examples where our results can be applied. In some sense, this section is a continuation of Example~\ref{e.msplits}.

In \cite{DiGe} are introduced the so-called ``porcupine-like horseshoes'' (these horseshoes were also motivated by the so-called bony attractors, see \cite{Yuri}). The simplest model 
of such horseshoes is obtained considering a step skew product map $F\colon \Sigma_2\times [0,1]
\to \Sigma_2 \times [0,1]$ whose fiber maps are $C^1$ and injective and are as depicted in Figure~\ref{f.fiberporc}. It turns out that,
for appropriate choices of  the fiber maps $f_1$ and $f_2$, the maximal attractor $\Lambda_F$ of $F$ is a ``non-hyperbolic'' transitive set\footnote{Obviously, here the hyperbolicity concerns only the one dimensional fiber direction.},  whose subsets of fiber contracting and fiber expanding periodic points are both dense
(indeed, the maximal attractor is an important type of transitive set called a ``homoclinic class'').
As in Example~\ref{e.msplits}, a  key feature of these systems is their ``mixed'' behavior in the fiber direction.

 \begin{figure}[h!]
 \centering
 \begin{tikzpicture}[xscale=3,yscale=3]
 \draw[->] (0,0)-- (1,0);
 \draw[->] (0,0)--(0,1);
 \draw[red,-] (0,0.9)--(1,0);
 \draw[dashed] (0,0)--(1,1);
   \draw[green,smooth,samples=100,domain=0.0:1] plot(\x,{-\x*\x+2*\x});
  \node[scale=0.6,left] at (0.6,0.9) {$T_{2}$};
   \node[scale=0.6,below] at (0.8,0.4) {$T_{1}$};
 \draw[dashed] (0,1)--(1,1);
 \draw[dashed] (1,0)--(1,1);
 \end{tikzpicture}
 \caption{ The fiber maps of a porcupine-like horseshoe.}
 \label{f.fiberporc}
 \end{figure}
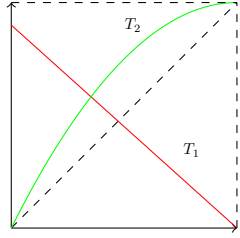
 
%

In the setting of porcupine-like horseshoes  above, the set 
 $S_F$ is a residual subset of $\Sigma_2$ whose complement $S_F^c$ is dense, see \cite{DiGe}\footnote{In \cite{DiGe} the set $S_F$ corresponds to the ``subset of sequences in $\Sigma_2$ with trivial spines''. Indeed, this is a simple remark that can be obtained in more general settings: it is not difficult to see that if  $S_F\ne \emptyset$ then $S_F$ is a residual subset of $\Sigma_k$.}.
 In \cite{Yuri} the structure of the sets $S_F$ and $S_F^c$ is analyzed more closely,  
 proving that 
 $\rm{HD} (S_F^c)<2$  (here $\rm{HD}$ stands for the Hausdorff dimension), see \cite[Lemma 4]{Yuri},
 and that 
 $\mathfrak{b}_{p} (S_F)=1$ for every $p\in (0,1)$, here $\mathfrak{b}_{p}$ is the Bernoulli measure with weights
 $(p,1-p)$, $p\in (0,1)$, see \cite[Theorem 1]{Yuri}. Arguing exactly as in Example~\ref{e.msplits}, one can prove that $F$ satisfies the splitting property for any Markov measure whose transition matrix is as in \eqref{e.matriztran}.
  Therefore,
 we get   a class of ``non-contracting''   examples satisfying the hypotheses in Theorems~\ref{mt.mixing} and \ref{mt.skewproduct}.
 
 Finally, let us observe that \cite{Yuri} (mainly) deals with systems whose fiber maps preserve the orientation. In his context it holds $A_F=[0,1]=\overline{A_F}$. It is interesting to compare with the case of porcupines, where $A_F 
 \subsetneq [0,1]$ and 
 $\overline{A_F}=[0,1]$.

\subsection{Notation}
In the space $\Sigma_k$ of two-sided infinite sequences with $k$ symbols we
consider
the canonical metric, which generates the 
product topology, defined by 
\begin{equation}
\label{e.do}
 d_{0}(\vartheta,\xi)\eqdef \frac{1}{2^{n}}, \quad \mbox{where}\quad n=\min\{|i|\colon \vartheta_{i}\neq \xi_{i}\}.
\end{equation}

Among all open sets, there is a special family called \emph{cylinders} which is a base for the 
product topology defined as follows.  Given a finite sequence $(a_{1}, \dots, a_{\ell})\in \{1,\dots, k\}^\ell$ and
$m\in \mathbb{Z}$ we let
\begin{equation}
 \label{e.generalcyl}
 [m;a_{1}\dots a_{\ell}]\eqdef \{\vartheta\in \Sigma_k\colon \vartheta_{m}=a_{1},\dots,\vartheta_{m+\ell-1}=a_{\ell}\}.
 \end{equation}
 Note that what determines in which position goes the sequence $a_{1}\dots a_{\ell}$ is 
only the index $m$. 

We also consider the space of unilateral sequences 
$\Sigma_{k}^{+}=\{1,\dots,k\}^{\mathbb{N}}$ endowed with the product topology and define cylinders in the obvious way.

Finally, given $\vartheta \in \Sigma_k$ we write $\vartheta=\vartheta^-.\vartheta^+$, where $\vartheta^+\in \Sigma_k^+$ is such that $\vartheta^+_i=\vartheta_i$, $i\ge 0$.

\section{Transitive invariant sets. Proof of Theorem~\ref{mt.mixing}}
\label{s.Milnor}

\subsection{Preliminary topological property} 
\label{s.maximalatractor}
Let $F\colon \Sigma_{k}\times M\to \Sigma_{k}\times M$ be a step skew product over the shift map
as in \eqref{defiskew2}. 
The  {\emph{spine}}  $I_\vartheta$ of a sequence $\vartheta\in\Sigma_{k}$ is defined 
 as follows,
 \begin{equation}
 \label{e.defspine}
  I_\vartheta \eqdef \bigcap_{n\ge 1} f_{\vartheta_{-1}} \circ  \cdots \circ f_{\vartheta_{-n}}(M).
 \end{equation}
  The next simple proposition characterizes the maximal attractor  
  $\Lambda_F$  of $F$
  in term of 
 the set of spines.

 
 \begin{prop}
  \label{p.maximalattractor}
$
\Lambda_F
\eqdef 
\displaystyle{
\bigcap_{n\geq 0}
F^{n}(\Sigma_k \times M)}=
{\displaystyle{\bigcup_{\vartheta\in \Sigma_{k}}\{\vartheta\}\times I_{\vartheta}.}}
$
\end{prop}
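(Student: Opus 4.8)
The statement is an equality of two subsets of $\Sigma_k\times M$, so I would prove the two inclusions separately. Throughout, I will use the product structure: writing $z=(\vartheta,x)$, the first coordinate of $F^n(z)$ is $\sigma^n(\vartheta)$, and a point $(\eta,y)$ lies in $F^n(\Sigma_k\times M)$ iff $y\in f_{\eta_{-1}}\circ\cdots\circ f_{\eta_{-n}}(M)$. This last observation is the key computational fact and essentially unwinds the definition of the step skew product. Indeed, $(\eta,y)=F^n(\vartheta,x)$ forces $\vartheta=(\sigma^{-n}\eta)$, i.e.\ $\vartheta_i=\eta_{i-n}$; then the second coordinate of $F^n(\vartheta,x)$ is $f_{\vartheta_{n-1}}\circ\cdots\circ f_{\vartheta_0}(x)=f_{\eta_{-1}}\circ\cdots\circ f_{\eta_{-n}}(x)$, and as $x$ ranges over $M$ this ranges over exactly $f_{\eta_{-1}}\circ\cdots\circ f_{\eta_{-n}}(M)$.

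\textbf{The inclusion $\supseteq$.} Fix $\vartheta\in\Sigma_k$ and $y\in I_\vartheta=\bigcap_{n\ge1}f_{\vartheta_{-1}}\circ\cdots\circ f_{\vartheta_{-n}}(M)$. For each $n\ge0$ I must exhibit a preimage of $(\vartheta,y)$ under $F^n$; by the computation above it suffices to check $y\in f_{\vartheta_{-1}}\circ\cdots\circ f_{\vartheta_{-n}}(M)$, which holds by definition of $I_\vartheta$ (for $n=0$ the statement is trivial). Hence $(\vartheta,y)\in F^n(\Sigma_k\times M)$ for every $n$, so $(\vartheta,y)\in\Lambda_F$.

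\textbf{The inclusion $\subseteq$.} Let $(\eta,y)\in\Lambda_F$, so for every $n\ge0$ there is $z_n=(\vartheta^{(n)},x_n)$ with $F^n(z_n)=(\eta,y)$. By the computation above, this forces $y\in f_{\eta_{-1}}\circ\cdots\circ f_{\eta_{-n}}(M)$ for every $n\ge1$, i.e.\ $y\in\bigcap_{n\ge1}f_{\eta_{-1}}\circ\cdots\circ f_{\eta_{-n}}(M)=I_\eta$. Therefore $(\eta,y)\in\{\eta\}\times I_\eta\subseteq\bigcup_{\vartheta}\{\vartheta\}\times I_\vartheta$.

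\textbf{Main obstacle.} There is essentially no obstacle: the proposition is a bookkeeping identity, and the only thing that needs care is the precise index shift when describing $F^n(\Sigma_k\times M)$ — namely that a point is in the $n$-th image iff its $M$-coordinate lies in the image of $M$ under the length-$n$ composition of fiber maps read off from the \emph{negative} coordinates $\vartheta_{-1},\dots,\vartheta_{-n}$ of its base coordinate. (This is exactly the same ``reversed composition'' that appears in the definitions of $S_F$ and $\rho$.) Once that is stated cleanly, both inclusions follow by directly matching it against the definition of the spine $I_\vartheta$. I would present it as the single displayed claim
\[
F^n(\Sigma_k\times M)=\bigcup_{\vartheta\in\Sigma_k}\{\vartheta\}\times \big(f_{\vartheta_{-1}}\circ\cdots\circ f_{\vartheta_{-n}}(M)\big),
\]
prove it by a one-line induction on $n$, and then intersect over $n\ge0$.
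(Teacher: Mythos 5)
Your proof is correct and follows essentially the same route as the paper: both establish the equality by the two inclusions, using the key observation that $(\eta,y)\in F^n(\Sigma_k\times M)$ precisely when $y\in f_{\eta_{-1}}\circ\cdots\circ f_{\eta_{-n}}(M)$ and then matching this against the definition of the spine $I_\vartheta$. Your presentation is slightly cleaner in that it phrases this observation as a single displayed claim, but the substance is identical to the paper's argument.
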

\begin{proof}
Consider a point $(\vartheta,x)$ such that $x\in I_{\vartheta}$. Then  
for every $n\geq 1$ we have that  the set 
$f_{\vartheta_{-n}}^{-1}\circ \cdots \circ f_{\vartheta_{-1}}^{-1}(x)\subset M$
is well defined and nonempty, hence
$$
(\vartheta,x)=F^{n}\big( \sigma^{-n}(\vartheta),  
f_{\vartheta_{-n}}^{-1}\circ \cdots \circ f_{\vartheta_{-1}}^{-1}(x)\big).
$$
This implies that $(\vartheta,x)\in \Lambda_F$, proving the inclusion  ``$\supset$''.
For the inclusion  ``$\subset$'' take any $(\vartheta,x)\in \Lambda_F$. By definition of $\Lambda_F$, 
for every $n\geq 1$ there is $({\vartheta}(n) ,{x}(n))\in \Sigma_{k}\times M$
 such that $F^{n}({\vartheta}(n), {x}(n) )=(\vartheta,x)$. Noting that ${\vartheta}(n)=\sigma^{-n}(\vartheta)$ we conclude that 
$$
x\in f_{\vartheta_{-1}}\circ  \cdots \circ f_{\vartheta_{-n}}(M).
$$
Since this holds for every $n\geq 1$ it follows that $x \in I_{\vartheta}$, proving
the proposition.
\end{proof}

\subsection{Proof of Theorem~\ref{mt.mixing}}
\label{sss.m.tmixing}
Recall the definitions of 
 the coding map 
 $\rho\colon S_{F}\rightarrow X$ 
 in  \eqref{e.varrhoproj} and of the target set $A_{F}=\rho (S_{F})$.
By definition,  
$F(\overline{\mathrm{graph}\, \rho})\subset \overline{\mathrm{graph}\, \rho}$ and thus 
$\overline{\mathrm{graph}\, \rho}\subset \Lambda_F$.
Note also that $\mathrm{graph}\, \rho\subset S_{F}\times A_{F}.$

Denote by $\mathcal{D}$ the set of disjunctive sequences of $\Sigma_k$.
The first part of item (1) of the theorem follows immediately from the next lemma. 

\begin{lema}\label{l.densedisjunctive}
Let $z=(\vartheta,x)\in \mathcal{D}\times M$. Then
$\overline{\mathrm{graph}\,\rho}\subset \omega_F(z)$.
\end{lema}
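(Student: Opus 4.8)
\textbf{Proof plan for Lemma~\ref{l.densedisjunctive}.}

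The plan is to show that every point of $\mathrm{graph}\,\rho$ is approximated by points of the forward $F$-orbit of $z=(\vartheta,x)$, and then take closure. So fix a point $(\eta,\rho(\eta))$ with $\eta\in S_F$ and fix $\epsilon>0$; I want to find arbitrarily large $n$ with $d(F^n(z),(\eta,\rho(\eta)))<\epsilon$. The two coordinates are handled by somewhat different mechanisms. The base coordinate is easy: since $\vartheta$ is disjunctive, its $\sigma$-orbit is dense in $\Sigma_k$, so there are infinitely many $n$ with $\sigma^n(\vartheta)$ agreeing with $\eta$ on an arbitrarily long central block $\eta_{-N}\ldots\eta_{N}$, i.e. $d_0(\sigma^n(\vartheta),\eta)$ small. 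The fiber coordinate is where the definition of $S_F$ does the work.

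The key point is this. Since $\eta\in S_F$, the nested compact sets $f_{\eta_{-1}}\circ\cdots\circ f_{\eta_{-j}}(M)$ shrink to the single point $\rho(\eta)$ as $j\to\infty$; hence, given $\epsilon>0$, there is $j=j(\eta,\epsilon)$ such that $f_{\eta_{-1}}\circ\cdots\circ f_{\eta_{-j}}(M)$ has diameter $<\epsilon/2$, and in particular is contained in the $\epsilon/2$-ball around $\rho(\eta)$. Now choose $N\geq j$ large enough that $2^{-N}<\epsilon/2$, and use disjunctiveness to pick $n$ with $\sigma^n(\vartheta)$ agreeing with $\eta$ on coordinates $-N,\ldots,N$; in particular $\vartheta_{n-1}=\eta_{-1},\ \vartheta_{n-2}=\eta_{-2},\ \ldots,\ \vartheta_{n-j}=\eta_{-j}$. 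Then the fiber component of $F^n(z)$ is
\[
f_{\vartheta_{n-1}}\circ\cdots\circ f_{\vartheta_0}(x)
\;=\;
f_{\eta_{-1}}\circ\cdots\circ f_{\eta_{-j}}\big(f_{\vartheta_{n-j-1}}\circ\cdots\circ f_{\vartheta_0}(x)\big)
\;\in\;
f_{\eta_{-1}}\circ\cdots\circ f_{\eta_{-j}}(M),
\]
which lies in the $\epsilon/2$-ball around $\rho(\eta)$. Combined with $d_0(\sigma^n(\vartheta),\eta)\leq 2^{-N}<\epsilon/2$ and the definition of the product metric on $\Sigma_k\times M$, this gives $d(F^n(z),(\eta,\rho(\eta)))<\epsilon$. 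Since the block length $N$ (hence $n$) can be taken arbitrarily large, there is a sequence $n_k\to\infty$ with $F^{n_k}(z)\to(\eta,\rho(\eta))$, so $(\eta,\rho(\eta))\in\omega_F(z)$. As $\eta\in S_F$ was arbitrary, $\mathrm{graph}\,\rho\subset\omega_F(z)$, and since $\omega_F(z)$ is closed we conclude $\overline{\mathrm{graph}\,\rho}\subset\omega_F(z)$.

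The only mildly delicate point is the interplay between how long a central block of $\eta$ I need to match: I need at least the first $j$ negative coordinates $\eta_{-1},\ldots,\eta_{-j}$ to control the fiber, and simultaneously enough coordinates (positive and negative) to make $d_0(\sigma^n(\vartheta),\eta)$ small — but both are just ``match a long enough central block,'' and disjunctiveness supplies infinitely many such $n$, so there is no real obstacle. One should also note that the argument does not use the value of $x$ at all, which is consistent with $\rho$ being independent of the base point; this is exactly why the conclusion holds for every $z$ in the fiber over a disjunctive sequence.
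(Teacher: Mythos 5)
Your proof is correct and follows essentially the same approach as the paper: use disjunctiveness of $\vartheta$ to pick iterates $n$ where $\sigma^n(\vartheta)$ matches a long central block of the target sequence $\eta$, then exploit the nested-sets-shrink-to-a-point characterization of $S_F$ to control the fiber coordinate, observing that the inner composition $f_{\vartheta_{n-j-1}}\circ\cdots\circ f_{\vartheta_0}(x)$ is simply some point of $M$ and hence irrelevant. The paper phrases this with a sequence $n_\ell\to\infty$ and writes the split composition via an auxiliary point $x_\ell$, while you give the equivalent $\epsilon$-based version; there is no substantive difference.
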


\begin{proof}
Since $\omega_F(z)$ is closed it is enough to see that $\mathrm{graph}\,\rho\subset \omega_F(z)$.
Take any point $(\beta,\rho(\beta))\in \mathrm{graph}\,\rho$ and consider a sequence $n_{\ell}\to \infty$
(with $n_\ell > \ell$)
such that 
$\sigma^{n_{\ell}}(\vartheta)\in [-\ell;\beta_{-\ell}\dots\beta_{\ell}]$
 for every $\ell\geq 0$ (here we use that $\vartheta\in \mathcal{D}$).  
 Note that $\sigma^{n_{\ell}-\ell}(\vartheta)\in [0;\beta_{-\ell}\dots\beta_{\ell}]$.
 Let
 $$
 x_\ell \eqdef f_{\vartheta_{n_\ell-\ell}} \circ \cdots \circ f_{\vartheta_0}(x).
 $$
 By definition of $S_{F}$ and since
 $\beta \in S_{F}$ it follows that
  $$
  \lim_ {\ell \to 
 \infty} f_{\beta_{-1}}\circ\dots \circ f_{\beta_{-\ell}}(x_\ell)=\rho(\beta).
$$ 
 Hence
 $\lim_{\ell \to \infty} F^{n_{\ell}}(\vartheta,x)=(\beta,\rho(\beta))$, proving that
  $\mathrm{graph}\,\rho\subset \omega_{F}(z)$.
  \end{proof}
  
We postpone the proof of the second part of the first item of Theorem~\ref{mt.mixing} to the end of this section.

 To prove the remainder items in the theorem
we need 
two preparatory lemmas.
Given
 $\xi^+=(\xi_0^+\xi_1^+\dots)\in \Sigma_{k}^{+}$ we consider the subset  of $\Sigma_{k}$ defined by
$$
[\xi^{+}] \eqdef
\{\vartheta \in \Si_k \colon \vartheta_i =\xi^+_i \quad  \mbox{for every $i\ge 0$}\}.
$$

\begin{lema}
\label{l.interesecao}
Consider any open set $[-\ell;b_{-\ell}\dots b_{\ell}]\times J\subset \Sigma_{k}\times M$ such that 
$$
A_{F}\cap  \big( 
(f_{b_{-1}}\circ\cdots \circ f_{b_{-\ell}})^{-1}( J) \big)\neq \emptyset.
$$
Then for every open set $Q\subset \Sigma^{+}_{k}$
there is $n_{0}=n_0(Q)\ge 0$ such that for every $n \geq n_{0}$ 
there is a sequence $\xi^{+,n}\in Q$ such  that
$$
F^{n}\big( [\xi^{+,n}]\times M \big)
\subset  [-\ell;b_{-\ell}\dots b_{\ell}]\times J.
$$ 
\end{lema}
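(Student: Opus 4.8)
The plan is to fix a point $y \in A_F$ lying in the set $(f_{b_{-1}}\circ\cdots\circ f_{b_{-\ell}})^{-1}(J)$ and to write $y=\rho(\beta)$ for some $\beta\in S_F$. The whole cylinder $[-\ell;b_{-\ell}\dots b_{\ell}]$ will be reached by prepending the block $b_{-\ell}\dots b_{\ell}$ in the appropriate position; the point of the argument is to show that, after many forward iterates, the entire fiber $M$ over a suitable cylinder $[\xi^{+,n}]$ gets squeezed inside $J$. The key mechanism is exactly the one defining $S_F$: since $\beta \in S_F$, the reversed compositions $f_{\beta_{-1}}\circ\cdots\circ f_{\beta_{-j}}(M)$ shrink to the singleton $\{\rho(\beta)\}$ as $j\to\infty$, and by continuity of $f_{b_{-1}}\circ\cdots\circ f_{b_{-\ell}}$ the image of a small enough such set lands inside $J$.

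First I would make this quantitative: because $\rho(\beta)\in (f_{b_{-1}}\circ\cdots\circ f_{b_{-\ell}})^{-1}(J)$ and $J$ is open, continuity gives a neighborhood $W$ of $\rho(\beta)$ with $f_{b_{-1}}\circ\cdots\circ f_{b_{-\ell}}(W)\subset J$. By the definition of $S_F$ (the nested intersection $\bigcap_{j\ge 0} f_{\beta_{-1}}\circ\cdots\circ f_{\beta_{-j}}(M)$ is the singleton $\{\rho(\beta)\}$, together with compactness of $M$ and the fact that these are nested compact sets), there is $j_0$ such that $f_{\beta_{-1}}\circ\cdots\circ f_{\beta_{-j}}(M)\subset W$ for all $j\ge j_0$. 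Fix such a $j\ge j_0$; then $f_{b_{-1}}\circ\cdots\circ f_{b_{-\ell}}\big(f_{\beta_{-1}}\circ\cdots\circ f_{\beta_{-j}}(M)\big)\subset J$.

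Next I would assemble the prescribed cylinder. Given the open set $Q\subset\Sigma_k^+$, choose inside $Q$ a cylinder $[0;c_0\dots c_{r-1}]\cap Q \ne \emptyset$, i.e. fix an admissible-free finite prefix $c_0\dots c_{r-1}$ with $[0;c_0\dots c_{r-1}]\subset Q$ (possible since cylinders form a base). For $n\ge n_0:=r+j+\ell+1$, define $\xi^{+,n}$ to be the sequence in $Q$ whose first $r$ symbols are $c_0\dots c_{r-1}$, whose next symbols (occupying positions $r$ through $n-1$) are chosen so that reading positions $n-\ell-1-j,\dots,n-\ell-2$ gives $\beta_{-j}\dots\beta_{-1}$ and positions $n-\ell-1,\dots,n-1$ give $b_{-\ell}\dots b_{\ell}$ — there is enough room precisely because $n\ge n_0$ — and whose remaining symbols (if any) are arbitrary. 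Then for any $\vartheta\in[\xi^{+,n}]$ and any $x\in M$, the first $n$ fiber maps applied in $F^n(\vartheta,x)$ factor as $f_{b_{-1}}\circ\cdots\circ f_{b_{-\ell}}\circ f_{\beta_{-1}}\circ\cdots\circ f_{\beta_{-j}}\circ(\text{something})(x)$, and since $(\text{something})(x)\in M$, the output lands in $J$; simultaneously $\sigma^n(\vartheta)$ starts with $b_{-\ell}\dots b_{\ell}$ shifted so that the middle symbol sits at index $0$, i.e. $\sigma^n(\vartheta)\in[-\ell;b_{-\ell}\dots b_{\ell}]$. Hence $F^n([\xi^{+,n}]\times M)\subset[-\ell;b_{-\ell}\dots b_{\ell}]\times J$, as required.

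The main obstacle is purely bookkeeping: getting the index alignment right so that after exactly $n$ iterates of $\sigma$ the block $b_{-\ell}\dots b_{\ell}$ sits in coordinates $-\ell,\dots,\ell$, while the fiber composition reads the blocks in the correct order (fiber maps are applied with $f_{\vartheta_0}$ innermost, by \eqref{eq:defcompo}, so the symbols must be laid out in reverse of the order in which one wants the maps composed). Once the combinatorial indexing is fixed, the analytic content is just the shrinking property of $S_F$ plus continuity of a fixed finite composition, both already available. I would also note that $n_0$ depends on $Q$ only through the length $r$ of the chosen prefix, consistent with the statement.
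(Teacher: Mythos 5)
Your proposal is correct in substance and follows essentially the same route as the paper's proof. Both arguments have the same three ingredients: (i) unfolding $q\in A_F\cap(f_{b_{-1}}\circ\cdots\circ f_{b_{-\ell}})^{-1}(J)$ into a finite block of fiber maps whose image of all of $M$ lands inside $J$ (you do this via $q=\rho(\beta)$, the nested shrinking of $f_{\beta_{-1}}\circ\cdots\circ f_{\beta_{-j}}(M)$ and continuity of the finite composition — exactly what the paper compresses into ``by definition of $A_F$, there is a sequence $c_1,\dots,c_p$ such that $f_{c_1}\circ\cdots\circ f_{c_p}(M)\subset\dots$''); (ii) prescribing a cylinder inside $Q$ to pin down the first few coordinates; and (iii) aligning the blocks at the right positions so that after $n$ iterates the base sequence lands in $[-\ell;b_{-\ell}\dots b_\ell]$ and the fiber composition maps $M$ into $J$. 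The paper outsources step (iii) to topological mixing of $\sigma^+$, whereas you do the cylinder bookkeeping by hand — this is a presentational rather than mathematical difference.

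One caveat, which you yourself flag: the index arithmetic in your explicit construction is off. With $F^n(\vartheta,x)=(\sigma^n\vartheta,\,f_{\vartheta_{n-1}}\circ\cdots\circ f_{\vartheta_0}(x))$, to get $\sigma^n(\vartheta)\in[-\ell;b_{-\ell}\dots b_\ell]$ you must set $\vartheta_{n-\ell},\dots,\vartheta_{n+\ell}$ equal to $b_{-\ell},\dots,b_\ell$ (a block of $2\ell+1$ positions, running past $n-1$), not positions $n-\ell-1,\dots,n-1$; and the $\beta$-block $\beta_{-j},\dots,\beta_{-1}$ should occupy positions $n-\ell-j,\dots,n-\ell-1$. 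With this correction, the fiber composition $f_{\vartheta_{n-1}}\circ\cdots\circ f_{\vartheta_0}$ factors as $(f_{b_{-1}}\circ\cdots\circ f_{b_{-\ell}})\circ(f_{\beta_{-1}}\circ\cdots\circ f_{\beta_{-j}})\circ(\text{rest})$, and the argument closes exactly as you describe with $n_0=r+j+\ell$.
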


\begin{proof}
  By hypothesis there is $q\in (f_{b_{-1}}\circ\cdots \circ f_{b_{-\ell}})^{-1}( J)
  \cap A_{F}$.   Thus, by definition of $A_{F}$,
  there is a sequence $c_{1},\dots c_{p}$ such that 
   $$
   f_{c_{1}}\circ \cdots \circ f_{c_{p}}(M)\subset (f_{b_{-1}}\circ\cdots \circ f_{b_{-\ell}})^{-1}( J).
 $$ 
 Hence
\begin{equation}\label{e.1}
  f_{b_{-1}}\circ\cdots \circ f_{b_{-\ell}}\circ f_{c_{1}}\circ\cdots\circ f_{c_{p}}(M)\subset J.
\end{equation}

Let $\sigma^{+}$ be the shift map on $\Sigma_{k}^{+}$ and recall that
it is topologically mixing. This implies that there is $m_{0}>0$ 
such that
 for each $n\geq m_{0}$  there is a sequence $\vartheta^{+,n}\in Q$ 
 such that 
 \begin{equation}\label{e.2}
 (\sigma^{+})^{n}(\vartheta^{+,n})\in [0; c_{p}\dots c_{1}b_{-\ell}\dots
 b_0 \dots  b_{\ell}] \subset \Sigma_k^+.
 \end{equation}
 For  each $n$ with $n-\ell-p \geq m_{0}$ we define the  sequence
 $$
 \xi^{+,n}\eqdef \vartheta^{+,n-(\ell+p)}\in \Sigma_{k}^{+}.
 $$
Take 
 any $\zeta \in [\xi^{+,n}]$ and note that, by construction, for every  $x\in M$ we get
$$
 F^{n-\ell-p}(\zeta,x)\in [0;c_{p}\dots c_{1}b_{-\ell}\dots b_0 \dots b_{\ell}]\times M.
 $$
 Therefore from equations \eqref{e.1} 
 and \eqref{e.2} it follows that
  $$
 F^{n}(\zeta,x)=F^{\ell+p}( F^{n-\ell-p}(\zeta,x))
 \in [-\ell;b_{-\ell}\dots b_{\ell}]\times J,
 $$
proving the lemma.
\end{proof}

%

\begin{lema}\label{l.invariancep}
Let $\Omega$ be a compact subset of $\Sigma_k\times \overline{A_{F}}$
with $F(\Omega)=\Omega$. Then $\Omega\subset\overline{\mathrm{graph}\,\rho}$.
\end{lema}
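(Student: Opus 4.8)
The plan is to fix an arbitrary point $(\vartheta,x)\in\Omega$ and show it is a limit of points of $\mathrm{graph}\,\rho$; concretely, for each $\ell\ge 1$ and each $\eps>0$ I will produce a sequence $\beta\in S_F$ that agrees with $\vartheta$ on all coordinates of index $\ge -\ell$ (so that $d_0(\beta,\vartheta)\le 2^{-(\ell+1)}$) and satisfies $d(\rho(\beta),x)<\eps$. The starting point is the hypothesis $F(\Omega)=\Omega$: for every $n\ge 1$ there is a point of $\Omega$ mapped onto $(\vartheta,x)$ by $F^n$, and since the base map is the shift this preimage has the form $(\sigma^{-n}\vartheta,x_n)$ with $x_n\in\overline{A_F}$ (because $\Omega\subset\Sigma_k\times\overline{A_F}$) and $x=f_{\vartheta_{-1}}\circ\cdots\circ f_{\vartheta_{-n}}(x_n)$.

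The second ingredient, which I expect to be the technical heart, is the invariance $f_i(\overline{A_F})\subset\overline{A_F}$ for every $i$. For $w=\rho(\gamma)\in A_F$ with $\gamma\in S_F$, one builds $\eta\in\Sigma_k$ with $\eta_{-1}=i$ and $\eta_{-1-j}=\gamma_{-j}$ for $j\ge 1$ (the nonnegative coordinates being irrelevant to $S_F$ and $\rho$); using that a nested sequence of compacts whose intersection is a singleton has diameters shrinking to $0$, and that $f_i(\bigcap_m K_m)=\bigcap_m f_i(K_m)$ for nested compacts, one gets $\bigcap_{m\ge 1}f_{\eta_{-1}}\circ\cdots\circ f_{\eta_{-m}}(M)=f_i\big(\bigcap_{m\ge 0}f_{\gamma_{-1}}\circ\cdots\circ f_{\gamma_{-m}}(M)\big)=\{f_i(w)\}$, hence $\eta\in S_F$ and $f_i(w)=\rho(\eta)\in A_F$; taking closures gives the claim. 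Combining this with the previous paragraph, for a fixed $\ell$ and $n\ge\ell$ we may rewrite $x=g_\ell(z_n)$, where $g_\ell\eqdef f_{\vartheta_{-1}}\circ\cdots\circ f_{\vartheta_{-\ell}}$ and $z_n\eqdef f_{\vartheta_{-\ell-1}}\circ\cdots\circ f_{\vartheta_{-n}}(x_n)\in\overline{A_F}$; thus $x\in g_\ell(\overline{A_F})$, say $x=g_\ell(z)$ with $z\in\overline{A_F}$.

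Now, given $\eps>0$, continuity of $g_\ell$ on the compact $M$ yields $\delta>0$ with $d(g_\ell(a),g_\ell(b))<\eps$ whenever $d(a,b)<\delta$; since $A_F$ is dense in $\overline{A_F}$, pick $w\in A_F$ with $d(w,z)<\delta$ and write $w=\rho(\eta)$, $\eta\in S_F$. Define $\beta$ by $\beta_i=\vartheta_i$ for $i\ge -\ell$ and $\beta_{-\ell-j}=\eta_{-j}$ for $j\ge 1$. For $m>\ell$ one has $f_{\beta_{-1}}\circ\cdots\circ f_{\beta_{-m}}(M)=g_\ell\big(f_{\eta_{-1}}\circ\cdots\circ f_{\eta_{-(m-\ell)}}(M)\big)$, and letting $m\to\infty$ the inner nested compacts shrink to $\{\rho(\eta)\}=\{w\}$, so $\bigcap_{m\ge 1}f_{\beta_{-1}}\circ\cdots\circ f_{\beta_{-m}}(M)=\{g_\ell(w)\}$. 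Hence $\beta\in S_F$ with $\rho(\beta)=g_\ell(w)$, giving $d(\rho(\beta),x)=d(g_\ell(w),g_\ell(z))<\eps$, while $\beta$ and $\vartheta$ coincide on all indices $\ge -\ell$, so $d_0(\beta,\vartheta)\le 2^{-(\ell+1)}$. Letting $\ell\to\infty$ and $\eps\to 0$ yields $(\vartheta,x)\in\overline{\mathrm{graph}\,\rho}$, which proves the lemma. The only steps demanding genuine care are the two applications of the ``shrinking nested compacts'' principle (used to check $\eta,\beta\in S_F$ and to evaluate $\rho$ on them) and keeping the symbolic indices straight when splicing $\vartheta$ and $\eta$ into $\beta$.
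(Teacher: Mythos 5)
Your proof is correct, but it takes a genuinely different route from the paper's. The paper's argument is ``forward'': starting from the fact that $F^{-\ell}(\vartheta,y)\cap\Omega\neq\emptyset$ puts $A_F$ in contact with a preimage of the target cylinder, it invokes Lemma~\ref{l.interesecao} (a transitivity lemma based on the topological mixing of the one-sided shift) to find a cylinder $[\xi^+]$ whose $F^n$-image lands inside the chosen basic open set $V\ni(\vartheta,y)$; then, using that membership in $S_F$ depends only on the negative coordinates, it finds a graph point in $[\xi^+]\times M$ and pushes it forward $n$ steps into $V\cap\mathrm{graph}\,\rho$. Your argument is ``backward'' and constructive: you splice the recent past of $\vartheta$ with the past of a weakly hyperbolic sequence $\eta$ coding a point near a preimage $z\in\overline{A_F}$ of $x$, and verify directly (via the nested-compacts principle and the identity $f(\bigcap_m K_m)=\bigcap_m f(K_m)$) that the spliced sequence $\beta$ lies in $S_F$ and that $\rho(\beta)$ is $\eps$-close to $x$. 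This requires you to establish the forward invariance $f_i(\overline{A_F})\subset\overline{A_F}$, a fact the paper never needs explicitly. Your route is more self-contained and exhibits the approximating graph points very concretely, whereas the paper's route factors out Lemma~\ref{l.interesecao}, which it then reuses verbatim to prove the topological mixing statement (Lemma~\ref{l.topologicallymixing}); so the two approaches differ in what they buy: yours avoids auxiliary machinery, theirs amortizes a single lemma across two results.
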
 

\begin{proof}
Take any point $(\vartheta,y)\in  \Omega$ and
 $V=[-\ell;\vartheta_{-\ell}\dots \vartheta_{\ell}]\times J$ be an open set such that $(\vartheta,y)\in V$.
Since $F(\Omega)=\Omega$ we get $F^{-\ell}(\vartheta,y)\cap \Omega\neq \emptyset$. This
implies 
that 
$$
(f_{\vartheta_{-1}}\circ \cdots\circ f_{\vartheta_{-\ell}})^{-1}(y)\cap \overline{A_{F}}\neq \emptyset
\quad \Longrightarrow \quad
A_{F}\cap f_{\vartheta_{-\ell}}^{-1}\circ \cdots\circ f_{\vartheta_{-1}}^{-1}(J)\neq \emptyset.
$$
Applying Lemma \ref{l.interesecao} to the open set $Q=\Sigma_k^+$ we get a sequence $\xi^{+}\in \Sigma_k^+$
and $n\ge 0$ such that
  $$
F^{n}\big( [\xi^{+}]\times M \big)
\subset  [-\ell;\vartheta_{-\ell}\dots \vartheta_{\ell}]\times J. 
$$ 
Note that by the definition of $S_F$ in \eqref{e.stmenos}, we have that if $\vartheta= \vartheta^-.\vartheta^+\in S_F$
then $\vartheta^-.\xi^+\in S_F$ for all $\xi^+ \in \Sigma_k^+$.
Since $S_F\ne \emptyset$ by hypothesis, this implies that
$ \big( [\xi^{+}]\times M \big)\cap \mathrm{graph}\,\rho\neq \emptyset$
 and that $\mathrm{graph}\,\rho$ is $F$-invariant, 
we conclude that 
$$
 \mathrm{graph}\,\rho\cap [-\ell;\vartheta_{-\ell}\dots \vartheta_{\ell}]\times J\neq \emptyset.
 $$
Since this holds for every neighborhood $V$ of  $(\vartheta,x)$ we get $(\vartheta,x)\in \overline{\mathrm{graph}\,\rho}$. As this holds for every point of $\Omega$ the lemma follows.
\end{proof}

We are now ready to prove item (2).
Recall that  $F_{A}$ is the restriction of $F$ to the set  $\Sigma_{k}\times \overline{A_{F}}$.
Clearly,  $\Lambda_{F_{A}}\subset\Sigma_{k}\times \overline{A_{F}}$
and since 
$\Lambda_{F_{A}}$ is a maximal invariant set we have that 
$F(\Lambda_{F_{A}})= F_{A}(\Lambda_{F_{A}})=\Lambda_{F_{A}}$.
Thus we can apply Lemma \ref{l.invariancep} to $\Lambda_{F_{A}}$ to obtain
$\Lambda_{F_{A}}\subset \overline{\mathrm{graph}\,\rho}$. 
For the converse  inclusion  ``$\supset$'', it is enough to
take any disjunctive point $z=(\vartheta,x)\in \Lambda_{F_{A}}$,
then the first item of the theorem 
and the $F$-invariance of  $\Lambda_{F_{A}}$
imply that
$$
 \overline{\mathrm{graph}\,\rho}\subset \omega_{F}(z)\subset \Lambda_{F_{A}},
$$
proving the inclusion.

Item (3) of the Theorem($F$ is topologically mixing
in  $\overline{\mathrm{graph}\,\rho}$)
is an immediate consequence of  the next lemma.

\begin{lema}\label{l.topologicallymixing}
Consider any pair of open sets 
$$
V=[-\ell;b_{-\ell}\dots b_{\ell}]\times J
\quad
\mbox{and}
\quad
U=[-m;a_{-m}\dots a_{m}]\times I
$$ 
intersecting 
$\overline{\mathrm{graph}\,\rho}$. 
Then 
there is $n_0$ such that
$$
F^{n}(U\cap \overline{\mathrm{graph}\,\rho})\cap V
\ne\emptyset
\quad
\mbox{for every $n\ge n_0$.}
$$
\end{lema}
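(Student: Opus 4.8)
The plan is to reduce the topological mixing of $F$ on $\overline{\mathrm{graph}\,\rho}$ to the combination of Lemma~\ref{l.interesecao} and Lemma~\ref{l.densedisjunctive}. Concretely, given the two open sets $V=[-\ell;b_{-\ell}\dots b_{\ell}]\times J$ and $U=[-m;a_{-m}\dots a_{m}]\times I$ that both meet $\overline{\mathrm{graph}\,\rho}$, I first observe that since $\mathrm{graph}\,\rho$ is dense in $\overline{\mathrm{graph}\,\rho}$, there is a point $(\beta,\rho(\beta))\in \mathrm{graph}\,\rho\cap U$. In particular $\rho(\beta)\in J'$ for a suitable open set, and more importantly the hypothesis of Lemma~\ref{l.interesecao} is met for $V$: because $V$ meets $\overline{\mathrm{graph}\,\rho}$, there is a point of $\mathrm{graph}\,\rho$ in $V$, hence some $\gamma\in S_F$ with $\rho(\sigma^j\gamma)\in J$ for the appropriate shift, which forces $A_F\cap (f_{b_{-1}}\circ\cdots\circ f_{b_{-\ell}})^{-1}(J)\neq\emptyset$ — this is exactly the standing assumption needed to invoke Lemma~\ref{l.interesecao}.

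Next I apply Lemma~\ref{l.interesecao} with the open set $Q\subset\Sigma_k^+$ chosen to be (the cylinder determined by) $[0;a_{-m}\dots a_{m}]$ shifted appropriately, i.e. $Q$ a cylinder forcing coordinates $0,\dots,2m$ to read $a_{-m}\dots a_m$. The lemma yields $n_0$ such that for every $n\ge n_0$ there is $\xi^{+,n}\in Q$ with $F^n([\xi^{+,n}]\times M)\subset V$. Now I need to intersect $[\xi^{+,n}]\times M$ with $U\cap \overline{\mathrm{graph}\,\rho}$, not just with $U$. Here I use the key remark from the proof of Lemma~\ref{l.invariancep}: if $\vartheta=\vartheta^-.\vartheta^+\in S_F$ then $\vartheta^-.\eta^+\in S_F$ for every $\eta^+\in\Sigma_k^+$, and $S_F\neq\emptyset$. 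Choosing $\vartheta^-$ to agree with $a_{-m}\dots a_{-1}$ on the relevant negative coordinates (possible because the negative part of a weakly hyperbolic sequence can be prescribed arbitrarily — indeed $S_F$ is determined by the negative coordinates only), the sequence $\vartheta^-.\xi^{+,n}$ lies in $S_F$, its positive part lies in $Q$, and hence $(\vartheta^-.\xi^{+,n},\rho(\vartheta^-.\xi^{+,n}))$ lies in $\mathrm{graph}\,\rho\cap([\xi^{+,n}]\times M)$; by arranging the negative coordinates to match $a_{-m}\dots a_{-1}$ and controlling $\rho$ on the fiber to land in $I$, this point lies in $U$. One must be slightly careful that $\rho$ of this sequence falls in $I$; this is handled exactly as in Lemma~\ref{l.interesecao}'s proof by first fixing a point of $\mathrm{graph}\,\rho$ inside $U$ and then matching finitely many negative coordinates of $\vartheta^-$ to it, so that $\rho$ changes by an arbitrarily small amount.

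Having produced a point $w\in U\cap\overline{\mathrm{graph}\,\rho}\cap([\xi^{+,n}]\times M)$, I conclude $F^n(w)\in F^n([\xi^{+,n}]\times M)\subset V$, and since $\mathrm{graph}\,\rho$ (hence its closure) is $F$-invariant, $F^n(w)\in V\cap\overline{\mathrm{graph}\,\rho}$. Thus $F^n(U\cap\overline{\mathrm{graph}\,\rho})\cap V\neq\emptyset$ for all $n\ge n_0$, which is precisely the assertion of the lemma and of item (3). The main obstacle I anticipate is the bookkeeping in the previous paragraph: ensuring simultaneously that (a) the sequence $\xi^{+,n}$ delivered by Lemma~\ref{l.interesecao} can have its positive coordinates made to start with the block $a_{-m}\dots a_m$ required by $U$ — this just needs $Q$ to be chosen as that cylinder — and (b) that after prepending a suitable negative tail $\vartheta^-$, the resulting weakly hyperbolic sequence has $\rho$-image in the fiber-open-set $I$. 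Point (b) is the delicate one, but it is resolved exactly by the argument already used in Lemma~\ref{l.interesecao} (choose a reference point of $\mathrm{graph}\,\rho$ in $U$, then match enough negative coordinates), so no genuinely new idea is required; the proof is essentially a re-packaging of Lemmas~\ref{l.densedisjunctive} and~\ref{l.interesecao} together with the $S_F$-extension remark.
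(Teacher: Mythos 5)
Your overall plan coincides with the paper's, but as written the key step of producing a point of $U\cap\mathrm{graph}\,\rho$ inside $[\xi^{+,n}]\times M$ does not close, for two related reasons.

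First, the parenthetical ``the negative part of a weakly hyperbolic sequence can be prescribed arbitrarily'' is false, and in fact contradicts your own (correct) next clause that $S_F$ is determined by the negative coordinates alone. It is the \emph{positive} tail that is free: if $\zeta=\zeta^-.\zeta^+\in S_F$ then $\zeta^-.\eta^+\in S_F$ for every $\eta^+$. You cannot prescribe $\vartheta^-$ to be whatever you like and hope the result lies in $S_F$; and matching only \emph{finitely many} negative coordinates of a reference point (as your ``point (b)'' discussion suggests) gives you neither $S_F$-membership nor control of $\rho$. The correct move is simpler: since $U$ is open and meets $\overline{\mathrm{graph}\,\rho}$, pick $(\zeta,z)\in U\cap\mathrm{graph}\,\rho$, and keep the \emph{entire} negative tail $\zeta^-$. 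Setting $\gamma=\zeta^-.\xi^{+,n}$, the extension remark gives $\gamma\in S_F$; the coordinates $\gamma_{-m},\dots,\gamma_{-1}$ equal $a_{-m},\dots,a_{-1}$ automatically because $\zeta\in U$; and crucially $\rho(\gamma)=\rho(\zeta)=z\in I$ \emph{exactly}, since $\rho$ depends only on the negative coordinates. Your ``delicate'' point (b) is therefore vacuous — no continuity estimate is needed, and indeed an estimate of the kind you sketch would not even yield $\gamma\in S_F$.

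Second, your choice of $Q$ is off. You take $Q$ to be ``a cylinder forcing coordinates $0,\dots,2m$ to read $a_{-m}\dots a_m$.'' Then $\xi^{+,n}_0=a_{-m}$, so $\gamma=\zeta^-.\xi^{+,n}$ has $\gamma_0=a_{-m}$, hence $\gamma\notin U=[-m;a_{-m}\dots a_m]$. The right choice is $Q=[0;a_0\dots a_m]\subset\Sigma_k^+$: the positive half of the block comes from $\xi^{+,n}\in Q$ and the negative half from $\zeta^-$. With these two corrections — keep all of $\zeta^-$, and take $Q=[0;a_0\dots a_m]$ — your argument becomes exactly the paper's proof (Claims~\ref{cl.doisemum} and \ref{cl.semnome}); your way of verifying the hypothesis of Lemma~\ref{l.interesecao} via a point of $\mathrm{graph}\,\rho$ in $V$ is a slight and perfectly valid variant of the paper's use of item (2).
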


\begin{proof}
We first state a claim that will be also used to conclude the proof of item (1).

\begin{claim}\label{cl.doisemum}
Let $(\vartheta,q)\in\overline{\mathrm{graph}\,\rho}$. Then
$
\overline{A_{F}}\cap f_{\vartheta_{-\ell}}^{-1}\circ \cdots\circ f_{\vartheta_{-1}}^{-1}(q)\neq \emptyset
$
for every $\ell>0$.
\end{claim}
\begin{proof} 
Take any $(\vartheta,q)\in\overline{\mathrm{graph}\,\rho}$.
 Since by item (2)  $\Lambda_{F_{A}}=\overline{\mathrm{graph}\,\rho}$,
 we have that $F(\overline{\mathrm{graph}\,\rho})=\overline{\mathrm{graph}\,\rho}$. Thus
 $F^{-\ell}(\vartheta,q)\cap \overline{\mathrm{graph}\,\rho} \neq \emptyset$ for every $\ell>0$.
 By definition, we have that 
$\overline{\mathrm{graph}\,\rho}\subset\Sigma_k \times\overline{A_{F}}$ which 
implies $(f_{\vartheta_{-1}}\circ \cdots\circ f_{\vartheta_{-\ell}})^{-1}(q)\cap \overline{A_{F}}\neq \emptyset$, proving the claim.
\end{proof}

By hypothesis,
the set  $V=[-\ell;b_{-\ell}\dots b_{\ell}]\times J$ 
contains a point $(\vartheta,q)\in\overline{\mathrm{graph}\,\rho}$. Hence,
by Claim~\ref{cl.doisemum},
$$
(f_{\vartheta_{-1}}\circ \cdots\circ f_{\vartheta_{-\ell}})^{-1}(q)\cap \overline{A_{F}}\neq \emptyset
\quad \implies \quad
A_{F}\cap f_{\vartheta_{-\ell}}^{-1}\circ \cdots\circ f_{\vartheta_{-1}}^{-1}(J)\neq \emptyset.
$$
 This allows to apply Lemma~\ref{l.interesecao} to the sets $V=[-\ell;b_{-\ell}\dots b_{\ell}]\times J$
 and $Q=[0; a_{0}\dots a_{m}]\subset \Sigma_k^+$ (where $a_0\dots a_m$ is an the definition of $U$), getting
  $m_{0}=m_0(Q)$ such that for every $n\geq m_{0}$ 
 there is a sequence $\xi^{+,n}\in Q$ such that 
 \begin{equation}
 \label{e.warsawlastday}
F^{n}\big( [\xi^{+,n}] \times M \big)
\subset  [-\ell;b_{-\ell}\dots b_{\ell}]\times J=V \quad \mbox{for every}\quad n\geq m_{0}.
\end{equation}

\begin{claim}
\label{cl.semnome}
$\big( \mathrm{graph}\,\rho \cap U \big) \cap \big([\xi^{+,n}]\times M\big)\neq \emptyset $ 
for every $ n\geq \max\{m, m_{0}\}$.
\end{claim}

\begin{proof}
By hypothesis, $U=[-m;a_{-m}\dots a_{m}]\times I$ and
there is 
$(\zeta,z)\in U\cap {\mathrm{graph}\,\rho}$
where
$\zeta\in S_{F}$. 
Consider the sequence $\gamma= \zeta^-.\xi^{+,n}\in
\Sigma_k$.    By construction, 
$$
\gamma \in S_{F} \cap [\xi^{+,n}] \cap [-m;a_{-m}\dots a_{m}]
$$
and $\rho (\gamma)=z$. Hence $(\gamma, \rho(\gamma))$ is in the intersection set 
in the claim.
\end{proof}

Using Claim~\ref{cl.semnome} and \eqref{e.warsawlastday}
 we get
$$
 F^{n}(U\cap \mathrm{graph}\,\rho)\cap V\neq \emptyset\quad \mbox{for all}\quad  n\geq n_0=\max\{m, m_{0}\},
 $$
proving the lemma.
 \end{proof}

It remains to prove the second part of   item (1). By the first part of 
 item (1) it remains to see that  (under  appropriate assumptions)  fixed any disjunctive point
 $z=(\vartheta,x)$  one has
 $\omega_{F}(z)\subset\overline{\mathrm{graph}\,\rho}$.

 First
   suppose that $\overline{A_{F}}$
 has nonempty interior.  
 This implies that there is $\xi_{-1}\dots \xi_{-r}$ such that
 $f_{\xi_{-1}} \circ \cdots \circ f_{\xi_{-r}}  (M)$ is contained in the interior of $\overline{A_{F}}$.
 Since 
 $\vartheta$ is disjunctive 
 there is a large $n_{0}$ such that $\vartheta_{n_0-r}\dots \vartheta_{n_0-1} \dots =
 \xi_{-r}\dots \xi_{-1}$. Hence
$\bar z=F^{n_{0}}(z)\in \Sigma_{k}\times\overline{A_{F}}$.
 By the invariance of $\overline{A_{F}}$ it follows that $F^{n}(z)\in \Sigma_{k}\times\overline{A_{F}}$
for every $n\geq n_{0}$ and hence
 $\omega_{F}(\bar z)\subset\Sigma_{k}\times\overline{A_{F}}$.
  Since $\omega_{F}(\bar z)=\omega_{F}(z)$ and $\omega_{F}(z)$ is $F$-invariant, 
  Lemma \ref{l.invariancep} implies that 
   $\omega_{F}(z)\subset\overline{\mathrm{graph}\,\rho}$.

Suppose now  that $\overline{A_{F}}$ is Lyapunov stable.
Take any
 open neighbourhood  $U$ of $\overline{A_{F}}$ and note that
there is a neighbourhood $V$ of 
 $\overline{A_{F}}$ such that $\mathcal{B}_{F}^{n}(V)\subset U$ 
 for every $n\geq 0$. 
 The definition of $\overline{A_{F}}$
 and the fact the $\vartheta$ is disjunctive imply that there is 
 $n_{0}$ such that $F^{n_{0}}(\vartheta,x)\in \Sigma_{k}\times V$. Hence
 $F^{n}(\vartheta,x)\in \Sigma_{k}\times U$ for every $n\geq n_{0}$, which implies 
 that $\omega_{F}(z)\subset \Sigma_{k}\times U$. Since $U$ is an arbitrary 
 neighbourhood of $\overline{A_{F}}$ we have that $\omega_{F}(z)\subset \Sigma_{k}\times \overline{A_{F}}$.
 It follows from Lemma \ref{l.invariancep} that 
 $\omega_{F}(z)\subset \overline{\mathrm{graph}\,\rho}$. The proof of the theorem is now complete.
 \hfill \qed

\section{Invariant sets with disconnected spines. Proof of Theorem \ref{mt.espinhodesconexo}}
\label{s.espinhodesconexo}

\subsection{Auxiliary $\mathcal{K}$-pairs}\label{ss.kpairs}
To construct the open set $\mathcal{S}_m$ in Theorem~\ref{mt.espinhodesconexo}
we need to recall the construction by Kudryashov in \cite{Yuri}.

\begin{defi}[$\mathcal{K}$-pairs]
\label{d.kpair}
{\em{
Consider $C^1$ increasing maps
 $f_{1},f_{2}\colon [c,d]\to [c,d]$. Let $J=[a,b]\subset[c,d]$ 
be such that $a$ is a globally attracting fixed point of $f_{1|J}$ and $b$ is a globally attracting fixed point
 of $f_{2|J}$.
We say that $(f_{1|J},f_{2|J})$ is a \emph{$\mathcal{K}$-pair} for $J$ if for every small $\epsilon>0$ there are
$C^1$ neighborhoods 
$\mathcal{V}_1$ and $\mathcal{V}_2$ of $f_1$ and $f_2$ respectively
such that for every pair of maps 
$g_1 \in \mathcal{V}_1$ and $g_2 \in\mathcal{V}_2$
 the following holds:    
\begin{enumerate}
\item 
Then there are points $p_{1}<p_{2}$ such that $|p_{1}-a|<\epsilon$, $|p_{2}-b|<\epsilon$ and 
$p_{i}$ is 
 a globally attracting fixed point of $g_{i|J'}$, $i=1,2$, where $J'=[p_{1},p_{2}]$.
\item
$g_{1|J'}\circ g_{2|J'}$ has a repelling fixed point 
\item 
There are uniform contractions $h_{1}\dots,h_{\ell}\in \mathrm{IFS}(g_{1|J'},g_{2|J'})$ such that 
$$
J'=\bigcup_{i}^{\ell} h_{i}(J').
$$
\end{enumerate}}}
\end{defi}


%
%
%

\begin{figure}[h!]
\centering
\begin{tikzpicture}[xscale=3,yscale=3]
 \node[scale=0.6,left] at (0.3,0.75) {$f_{2}$};
  \node[scale=0.6,below] at (0.7,0.6) {$f_{1}$};
\draw[->] (0,0)-- (1,0);
\draw[->] (0,0)--(0,1);
\draw[dashed] (0,1)--(1,1);
\draw[dashed] (0,0)--(1,1);
\draw[dashed] (1,0)--(1,1);
 \draw[red, scale=1] (0,0.15) sin (1,1); 
     \draw[scale=1.2,domain=0:0.83,smooth,variable=\x,blue] plot ({\x},{\x*\x});
\end{tikzpicture}
\caption{A $\mathcal{K}$-pair.}
\label{bony}
\end{figure}
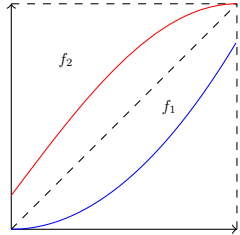

\begin{remark}[Existence of $\mathcal{K}$-pairs and minimality] \label{r.kpairs}
{\emph{
The existence of $\mathcal{K}$-pairs on 
a prescribed interval $[a,b]$ is provided by Kudryashov in 
\cite{Yuri}.
Note also that the $\mathcal{K}$-pairs form
a $C^1$-open family.}}

{\em{
Using the notation above, if
 $(f_{1|J},f_{2|J})$ is a \emph{$\mathcal{K}$-pair} for $J$, then
  $\mathfrak{G}=\mathrm{IFS}(g_{1|J'},g_{2|J'})$ is {\emph{minimal,}} that is,
 given any point $x\in J'$ its $\mathfrak{G}$-orbit  
$$
\mathfrak{G}(x)\eqdef\{g(x)\colon g\in \mathrm{IFS}(g_{1|J'},g_{2|J'})\}
$$ 
is dense in $J'$ (here $J'=[p_1,p_2]$ as above). To see why this is so, let $\mathfrak{K}=\mathrm{IFS}(h_{1},\dots,h_{\ell})$, where $h_1,\dots, h_\ell$ are as in item (3). It follows from 
\cite{Hu} that the target set $A_{\mathfrak{K}}$ (that is, the target set 
of the skew product whose fiber maps are $h_{1},\dots,h_{\ell}$) coincides with $J'$. Since $\mathfrak{K}$ is a subsystem of 
$\mathfrak{G}$ then we have $A_{\mathfrak{K}}\subset A_{\mathfrak{G}}\subset J'$ which implies that 
$A_{\mathfrak{G}}= J'$. It is clear that $A_{\mathfrak{G}}= J'$ implies minimality.}}
\end{remark}

%
%

%
%
%

%
%
%

\subsection{Proof of Theorem~ \ref{mt.espinhodesconexo}}
Fix $m\ge 1$ and consider $m$ disjoint subintervals of $[0,1]$, say
 $I_{1}=[a_{1},b_{1}],\dots, I_{m}=[a_{m},b_{m}]$,
 with $a_{1}=0<b_{1}< b_{2}<\dots <a_m < b_{m}=1$
 and four strictly increasing
    $C^1$ maps  $f_1,f_2,f_3, f_4\colon [0,1]\to [0,1]$ such that (see Figure ~\ref{f.4intervalos}):
  \begin{itemize}  
  \item[(a)]
  $f_{1}(I_{i})\subset I_{i}$, $f_{2}(I_{i})\subset I_{i}$ and 
 $(f_{1|I_{i}}, f_{2|I_{i}})$ is a $\mathcal{K}$-pair for $I_i$, for every  $i\in \{1,\dots,m\}$,
\item[(b)]
  $f_3$ is a contraction with $f_3([0,1])\subset I_1$, and
  \item[(c)] 
$f_4(I_i)\subset \textrm{int}(I_{i+1})$ for $i\in \{1,\dots, m-1\}$
and $f_4(I_m)\subset \textrm{int}(I_{m})$.
\end{itemize}

\begin{figure}[!h]
  \centering
  \begin{tikzpicture}[xscale=3.5,yscale=3.5]
 
     \draw[->] (0,0) -- (1.5,0);
    \draw[->] (0,0) -- (0,1.5);
   
    \draw[gray,dashed] (0,0.25)--(0.25,0.25);
     \draw[gray,dashed] (0.25,0)--(0.25,0.25);
     \draw[gray,dashed] (0.4,0)--(0.4,0.25);
      \draw[gray,dashed] (0.4,0.25)--(0.65,0.25);
     \draw[gray,dashed] (0.65,0)--(0.65,0.25);
      \draw[gray,dashed] (0.75,0)--(0.75,0.25);
      \draw[gray,dashed] (0.75,0.25)--(1,0.25);
     \draw[gray,dashed] (1,0)--(1,0.25);
     
     \draw[gray,dashed] (0,0.4)--(0.25,0.4);
     \draw[gray,dashed] (0.25,0.4)--(0.25,0.65);
     \draw[gray,dashed]  (0,0.65)--(0.25,0.65);
     
      \draw[gray,dashed] (0,0.75)--(0.25,0.75);
     \draw[gray,dashed] (0.25,0.75)--(0.25,1);
     \draw[gray,dashed]  (0,1)--(0.25,1);
     
     \draw[gray,dashed]  (0,1.25)--(0.25,1.25);
      
     \draw[gray,dashed]  (0,1.5)--(0.25,1.5);
    \draw[gray,dashed]    (0.25,1.25)--(0.25,1.5);

     \draw[gray,dashed] (0.4,0.4)--(0.4,0.65);
      \draw[gray,dashed] (0.4,0.65)--(0.65,0.65);
      \draw[gray,dashed] (0.4,0.4)--(0.65,0.4);
      \draw[gray,dashed] (0.65,0.4)--(0.65,0.65);

     \draw[gray,dashed] (0.4,0.75)--(0.4,1);
      \draw[gray,dashed] (0.4,1)--(0.65,1);
      \draw[gray,dashed] (0.4,0.75)--(0.65,0.75);
      \draw[gray,dashed] (0.65,0.75)--(0.65,1);
      
      \draw[gray,dashed]  (0.4,1.25)--(0.4,1.5);
      \draw[gray,dashed]  (0.4,1.5)--(0.65,1.5);
      \draw[gray,dashed]  (0.65,1.5)--(0.65,1.25);
      \draw[gray,dashed]  (0.4,1.25)--(0.65,1.25);

     \draw[gray,dashed] (0.75,0.4)--(0.75,0.65);
      \draw[gray,dashed] (0.75,0.65)--(1,0.65);
      \draw[gray,dashed] (0.75,0.4)--(1,0.4);
      \draw[gray,dashed] (1,0.4)--(1,0.65);  
     \draw[gray,dashed] (0.75,1.25)--(0.75,1.5); 
     \draw[gray,dashed] (0.75,1.25)--(1,1.25); 
     \draw[gray,dashed] (0.75,1.5)--(1,1.5);
      \draw[gray,dashed] (1,1.25)--(1,1.5);
      

     \draw[gray,dashed] (0.75,0.75)--(0.75,1);
      \draw[gray,dashed] (0.75,1)--(1,1);
      \draw[gray,dashed] (0.75,0.75)--(1,0.75);
      \draw[gray,dashed] (1,0.75)--(1,1);
      
      \draw[gray,dashed] (1.25,1.25)--(1.25,1.5);
      \draw[gray,dashed] (1.25,1.25)--(1.5,1.25);
      \draw[gray,dashed] (1.25,1.5)--(1.5,1.5);
      \draw[gray,dashed] (1.5,1.25)--(1.5,1.5);

      \draw[gray,dashed] (1.25,0.75)--(1.25,1);
      \draw[gray,dashed] (1.25,0.75)--(1.5,0.75);
      \draw[gray,dashed] (1.25,1)--(1.5,1);
      \draw[gray,dashed] (1.5,0.75)--(1.5,1);

      \draw[gray,dashed] (1.25,0.4)--(1.25,0.65);
      \draw[gray,dashed] (1.25,0.4)--(1.5,0.4);
      \draw[gray,dashed] (1.25,0.65)--(1.5,0.65);
      \draw[gray,dashed] (1.5,0.4)--(1.5,0.65);

      \draw[gray,dashed] (1.25,0)--(1.25,0.25);
      \draw[gray,dashed] (1.25,0.25)--(1.5,0.25);
      \draw[gray,dashed] (1.5,0)--(1.5,0.25);

      \draw[gray,dashed] (0,0)--(1.5,1.5);

       \draw[blue,-] (0,0)--(0.15,0.05);
      \draw[blue,-] (0.15,0.05)--(0.25,0.18);
       \draw[blue,-] (0.25,0.18)--(0.3,0.35);
         \draw[blue,-](0.3,0.35)--(0.4,0.4);
          \draw[blue,-] (0.4,0.4)-- (0.55,0.45);
            \draw[blue,-] (0.55,0.45)-- (0.65,0.6);
        \draw[blue,-] (0.75,0.75)--(0.9,0.8);
      \draw[blue,-] (0.9,0.8)--(1,0.93);
       \draw[blue,-] (0.65,0.6)--(0.69,0.72);
        \draw[blue,-] (0.69,0.72)--(0.75,0.75);
        \draw[blue,-](1.25,1.25)--(1.4,1.3);
        \draw[blue,-](1.4,1.3)--(1.5,1.43);
        \draw[blue,-] (1,0.93)--(1.10,1.15);
        \draw[blue,-](1.10,1.15)--(1.25,1.25);
       
     \draw[->](-0.75,0.75)--(-0.5,0.75);
   \draw[->](-0.75,0.75)--(-0.75,1);   
      \draw[gray,dashed] (-0.75,1)--(-0.5,1);
        \draw[gray,dashed]  (-0.5,0.75)--(-0.5,1);
      \draw[red,-] (-0.75,0.787)--(-0.65,0.95);
      \draw[red,-]  (-0.65,0.95)--(-0.5,1);
       \draw[blue,-] (-0.75,0.75)--(-0.6,0.8);
      \draw[blue,-]  (-0.6,0.8)--(-0.5,0.93);

       \draw[red,-]  (0,0.037)--(0.1,0.2);
        \draw[red,-]  (0.1,0.2)--(0.34,0.279);
        \draw[red,-] (0.34,0.279)--(0.4,0.437);   
     \draw[red,-](0.65,0.65)--(0.72,0.67);   
         \draw[red,-] (0.72,0.67)--(0.75,0.787); 
        \draw[red,-](0.4,0.437)--(0.5,0.6);
         \draw[red,-](0.5,0.6)--(0.65,0.65);
         \draw[red,-]  (0.75,0.787)--(0.85,0.95);
        \draw[red,-]  (0.85,0.95)--(1,1);
       \draw[red,-] (1.25,1.287)--(1.35,1.45);
       \draw[red,-](1.35,1.45)--(1.5,1.5);
        \draw[red,-](1,1)--(1.15,1.10);
        \draw[red,-](1.15,1.10)--(1.25,1.287);
        
\draw[magenta,-]   (0,0.5)--(0.3,0.6);

\draw[magenta,-]    (0.3,0.6)--(0.333,0.8);

\draw[magenta,-]    (0.333,0.8)--(0.68,0.9);      
\draw[magenta,-]     (0.68,0.9)--(0.72,1.3);
\draw[magenta,-]     (0.72,1.3)--(1.5,1.4);

        \draw[-](0,0.11)--(1.5,0.2);
      
       \node[scale=0.8,left] at (0,0.15) {$I_{1}$};
        \node[scale=0.8,left] at (0,0.5) {$I_{2}$};
        \node[scale=0.8,left] at (0,0.9) {$I_{3}$};
        \node[scale=0.8,left] at (0,1.35) {$I_{4}$};        
        
         \node[scale=0.8,below] at (0.15,0) {$I_{1}$};
        \node[scale=0.8,below] at (0.5,0) {$I_{2}$};
        \node[scale=0.8,below] at (0.9,0) {$I_{3}$};
         \node[scale=0.8,below] at (1.35,0) {$I_{4}$};
         
          \node[scale=0.5] at (-0.38,0.9) {$\mathcal{K}$-pair};
         \draw[gray,dashed,->] (-0.28,0.9)--(0,0.15);  
         \draw[gray,dashed,->] (-0.28,0.9)--(0.4,0.5);
         \draw[gray,dashed,->]  (-0.28,0.9)--(0.75,0.9);
          \draw[gray,dashed,->]  (-0.28,0.9)--(1.25,1.35);

%
       \end{tikzpicture}
     \caption{A $\mathcal{K}$-pair and a piecewise linear model.}\label{f.4intervalos}
 
     \end{figure}
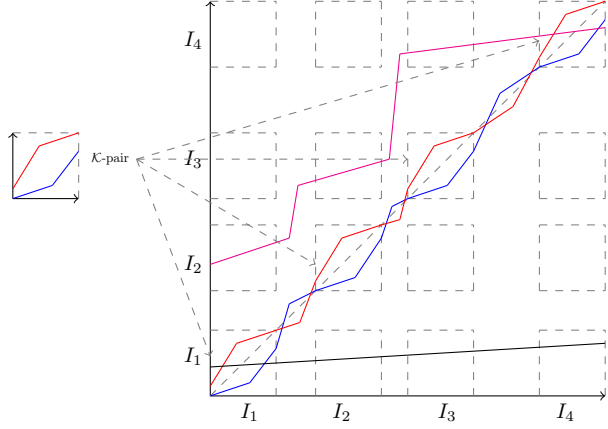

This construction can be done by using $C^{1}$ maps 
in a robust way (recall Remark~\ref{r.kpairs}). 
\begin{prop}\label{p.bones}
Let $F\colon \Sigma_{4}\times [0,1]
\to \Sigma_{4}\times [0,1]$ be a step skew product  whose fiber maps
$f_{1},f_{2},f_{3},$ and $f_{4}$
satisfy properties (a), (b), and (c). 
 Then
 \begin{itemize}
 \item[(1)] For every disjunctive point $z$  it holds 
$\overline{\mathrm{graph}\,\rho}=\omega_F(z)$.
 \item[(2)]
 There is a dense subset 
 $\Gamma$
 of $\Sigma_{4}$  such that
 for every sequence $\vartheta\in \Gamma$ the intersection of the fibre $\{\vartheta\}\times [0,1]$ with 
 $\overline{\mathrm{graph}\,\rho}$ is the
 union of $m$ disjoint nontrivial intervals. 
  \end{itemize}
 \end{prop}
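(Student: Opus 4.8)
The plan is to first pin down the target set exactly as $\overline{A_F}=K\eqdef I_1\cup\cdots\cup I_m$, and then to feed this both into Theorem~\ref{mt.mixing} and into the fibrewise description $\overline{\mathrm{graph}\,\rho}=\bigcup_{\vartheta}\{\vartheta\}\times I^A_\vartheta$ with $I^A_\vartheta\eqdef\bigcap_{n\ge1}f_{\vartheta_{-1}}\circ\cdots\circ f_{\vartheta_{-n}}(\overline{A_F})$ (which follows from Theorem~\ref{mt.mixing}(2) and Proposition~\ref{p.maximalattractor} applied to $F_A$, once $S_F\neq\emptyset$). For the inclusion $\overline{A_F}\subset K$, properties (a)--(c) give $\mathcal{B}_F(K)\subset K$ (as $f_1(I_j),f_2(I_j)\subset I_j$, $f_3([0,1])\subset I_1$, $f_4(I_j)\subset I_{j+1}$ for $j<m$, and $f_4(I_m)\subset I_m$); since $K$ is compact and $\rho(\vartheta)=\lim_n f_{\vartheta_{-1}}\circ\cdots\circ f_{\vartheta_{-n}}(p)$ for any $p\in K$, every point of $A_F$ lies in $K$. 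For the reverse inclusion I would first note $S_F\neq\emptyset$: the sequence with $\vartheta_{-j}=3$ for all $j\ge1$ has spine equal to the fixed point of the contraction $f_3$, so $A_F\neq\emptyset$. Then, fixing $i$ and $\eta\in S_F$ and setting $z\eqdef f_4^{i-1}\circ f_3(\rho(\eta))\in I_i$, I would check that for every $\psi\in\mathrm{IFS}(f_{1|I_i},f_{2|I_i})$, realized by a finite word $c_1\cdots c_L$ in $\{1,2\}$, the sequence whose negative part is $c_1,\dots,c_L$, then $i-1$ symbols $4$, then $3$, then $\eta_{-1},\eta_{-2},\dots$, lies in $S_F$ with $\rho$-image $\psi(z)$; hence the whole $\mathrm{IFS}(f_{1|I_i},f_{2|I_i})$-orbit of $z$ is contained in $A_F$, and by its minimality on $I_i$ (Remark~\ref{r.kpairs}) this orbit is dense in $I_i$. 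Thus $I_i\subset\overline{A_F}$ for each $i$, so $\overline{A_F}=K$, which has nonempty interior.

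Item~(1) is then immediate: $S_F\neq\emptyset$ and $\overline{A_F}$ has nonempty interior, so Theorem~\ref{mt.mixing}(1) gives $\overline{\mathrm{graph}\,\rho}=\omega_F(z)$ for every disjunctive point $z$.

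For item~(2) the plan is to work with the left-infinite word $w$ given by $w_{-(2j-1)}=1$, $w_{-2j}=2$. Writing $g_i\eqdef f_{1|I_i}\circ f_{2|I_i}\colon I_i\to I_i$ one has $f_{w_{-1}}\circ\cdots\circ f_{w_{-2k}}|_{I_i}=g_i^{\,k}$, and since the odd partial compositions are squeezed between two consecutive even ones, $\bigcap_n f_{w_{-1}}\circ\cdots\circ f_{w_{-n}}(I_i)=\bigcap_k g_i^{\,k}(I_i)=[p_i^-,p_i^+]$, where $p_i^\pm=\lim_k g_i^{\,k}$ of $a_i$, resp.\ $b_i$, are fixed points of $g_i$. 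The decisive point is that $[p_i^-,p_i^+]$ is \emph{nondegenerate}: $g_i$ is strictly increasing with $g_i(a_i)>a_i$ and $g_i(b_i)<b_i$, while the second defining property of a $\mathcal{K}$-pair (Definition~\ref{d.kpair}) supplies a repelling fixed point $r_i\in(a_i,b_i)$ of $g_i$, and an intermediate-value argument then forces further fixed points $q_i\in(a_i,r_i)$ and $q_i'\in(r_i,b_i)$, whence $a_i<p_i^-\le q_i<q_i'\le p_i^+<b_i$. Since $f_1,f_2$ preserve each $I_i$ and the $I_i$ are pairwise disjoint, any $\vartheta$ whose past equals $w$ satisfies $I^A_\vartheta=\bigcup_{i=1}^m[p_i^-,p_i^+]$. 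Finally, taking $\Gamma$ to be the set of $\vartheta\in\Sigma_4$ whose past is eventually equal to $w$ (which is dense, as any finite central block may be prescribed and then continued by $w$), for $\vartheta\in\Gamma$ with $\vartheta_{-j}=w_{-(j-N)}$ for $j>N$ the nesting $\mathcal{B}_F(K)\subset K$ yields $I^A_\vartheta=\big(f_{\vartheta_{-1}}\circ\cdots\circ f_{\vartheta_{-N}}\big)\big(\bigcup_{i=1}^m[p_i^-,p_i^+]\big)$; as the fiber maps are injective and continuous this prefix map is a homeomorphism onto its image, so $I^A_\vartheta$ is again a union of $m$ disjoint nondegenerate intervals, and $(\{\vartheta\}\times[0,1])\cap\overline{\mathrm{graph}\,\rho}=\{\vartheta\}\times I^A_\vartheta$ is exactly the set in item~(2).

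The step I expect to be the real obstacle — and the conceptual heart of the argument — is the nondegeneracy of the fibre intersections $[p_i^-,p_i^+]$: under ordinary Hutchinson-type contraction the orbit of $b_i$ under $f_1\circ f_2$ would accumulate on the same point as the orbit of $a_i$, and the intersection would collapse to a singleton. It is precisely the repelling periodic point built into each $\mathcal{K}$-pair (and impossible for a genuinely hyperbolic iterated function system) that keeps the two boundary orbits apart; the accompanying technical work is the exact identification $\overline{A_F}=K$, and in particular the inclusion $I_i\subset\overline{A_F}$, which is what makes the clean formula $I^A_\vartheta=\bigcap_n f_{\vartheta_{-1}}\circ\cdots\circ f_{\vartheta_{-n}}(K)$ available.
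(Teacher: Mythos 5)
Your proposal is correct and follows essentially the same route as the paper: identify $\overline{A_{F}}=I_{1}\cup\dots\cup I_{m}$ using the minimality of the $\mathcal{K}$-pair iterated function system, deduce item (1) from the nonempty-interior case of Theorem~\ref{mt.mixing}, and get item (2) by computing the spines over $\overline{A_{F}}$ for pasts eventually equal to the alternating word $12$ (splitting the intersection over the disjoint $I_{i}$, with nondegeneracy forced by the repelling fixed point of $f_{1}\circ f_{2}$) and pushing this structure by injective finite prefixes to a dense set $\Gamma$. The only differences are presentational: you verify $I_{i}\subset\overline{A_{F}}$ by exhibiting explicit weakly hyperbolic sequences instead of invoking $\mathcal{B}_{F}$-invariance of $\overline{A_{F}}$, and you spell out the nondegeneracy argument that the paper only asserts.
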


Theorem~\ref{mt.espinhodesconexo} follows from this proposition and Remark~\ref{r.kpairs}: note that if $F\colon \Sigma_k \times [0,1]\to \Sigma_k \times [0,1]$ is such that its fiber maps have a $\mathcal{K}$-pair the same holds for every $G\colon \Sigma_k \times [0,1]\to \Sigma_k \times [0,1]$ sufficiently $C^1$ close to $F$. Also note that the other properties of the fiber maps of $F$ hold for small $C^1$ perturbations.

\begin{proof}[Proof of Proposition~\ref{p.bones}]
The next lemma implies  that $\overline{A_{F}}$ has a nonempty interior, thus
the first item of the proposition follows from
the second part of item (1) in Theorem~\ref{mt.mixing}.

\begin{lema}\label{l.overlinetarget}
$
\overline{A_{F}}=\bigcup_{i=1}^{m} I_{i}.
$
\end{lema}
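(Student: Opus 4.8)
The plan is to show the two inclusions $\overline{A_F}=\bigcup_{i=1}^m I_i$ directly from the structure of the fiber maps and the definition of the target set $A_F=\rho(S_F)$.

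First I would establish the inclusion $\overline{A_F}\subset\bigcup_{i=1}^m I_i$. Since $\bigcup_i I_i$ is closed, it suffices to show $A_F\subset\bigcup_i I_i$. Fix $\vartheta\in S_F$; then $\rho(\vartheta)=\lim_{n\to\infty} f_{\vartheta_{-1}}\circ\cdots\circ f_{\vartheta_{-n}}(p)$ for any $p\in[0,1]$. The key point is that the union $U\eqdef\bigcup_{i=1}^m I_i$ is \emph{forward invariant} under every fiber map: property (a) gives $f_1(I_i)\subset I_i$ and $f_2(I_i)\subset I_i$; property (b) gives $f_3([0,1])\subset I_1\subset U$; and property (c) gives $f_4(I_i)\subset I_{i+1}$ (or $I_m$), so $f_4(U)\subset U$. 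Hence $f_j(U)\subset U$ for all $j\in\{1,2,3,4\}$. Choosing the base point $p\in U$ (say $p=0\in I_1$), every finite composition $f_{\vartheta_{-1}}\circ\cdots\circ f_{\vartheta_{-n}}(p)$ lies in $U$, and since $U$ is closed the limit $\rho(\vartheta)$ lies in $U$. This gives $A_F\subset U$, hence $\overline{A_F}\subset U$.

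Next I would prove $\bigcup_{i=1}^m I_i\subset\overline{A_F}$, which is where the $\mathcal K$-pair hypothesis does the real work and which I expect to be the main obstacle. It suffices to show each $I_i\subset\overline{A_F}$. Fix $i$. By Remark~\ref{r.kpairs}, since $(f_{1|I_i},f_{2|I_i})$ is a $\mathcal K$-pair for $I_i$, the sub-IFS $\mathrm{IFS}(f_{1|I_i},f_{2|I_i})$ has target set equal to $I_i$; equivalently there are uniform contractions $h_1,\dots,h_\ell\in\mathrm{IFS}(f_{1|I_i},f_{2|I_i})$ with $I_i=\bigcup_{r=1}^\ell h_r(I_i)$, so $I_i$ is the Hutchinson attractor of $\{h_1,\dots,h_\ell\}$. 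Now consider the step skew product $G_i$ whose fiber maps are $h_1,\dots,h_\ell$: by Hutchinson theory (\cite{Hu}) its target set is $A_{G_i}=I_i$. Since each $h_r$ is a word in $f_{1|I_i}, f_{2|I_i}$ and $f_1(I_i)\subset I_i$, $f_2(I_i)\subset I_i$, every such $h_r$ arises as a composition of the original fiber maps $f_1,f_2$ of $F$ restricted appropriately; concretely, a backward $\vartheta$-orbit realizing a point of $A_{G_i}$ can be realized by a sequence in $\Sigma_4$ using only the symbols $1$ and $2$, with base point chosen in $I_i$ (which is forward-invariant under $f_1,f_2$). This shows $A_{G_i}\subset A_F$, hence $I_i=A_{G_i}\subset A_F\subset\overline{A_F}$.

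The delicate point to handle carefully is the bookkeeping between ``compositions of $h_1,\dots,h_\ell$'' and ``compositions of $f_1,f_2$'': one must check that the reversed-composition convention in \eqref{e.varrhoproj} matches the grouping of $f_1,f_2$ into the blocks $h_r$, and that restricting to $I_i$ is harmless because $I_i$ is invariant and one is free to take $p\in I_i$ in the definition of $\rho$. Once this identification is made, combining the two inclusions yields $\overline{A_F}=\bigcup_{i=1}^m I_i$, completing the proof of the lemma. I would also note in passing that this argument does not use property (c) or the map $f_4$ at all for the second inclusion — $f_4$ is only needed to keep $\overline{A_F}$ inside $\bigcup_i I_i$ and, later in the proof of Proposition~\ref{p.bones}, to produce the sequences $\vartheta\in\Gamma$ whose spines meet several $I_i$'s.
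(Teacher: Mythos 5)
Your first inclusion $\overline{A_F}\subset\bigcup_{i=1}^m I_i$ is correct and in fact a bit cleaner than the paper's contradiction argument: both rest on forward invariance of $U\eqdef\bigcup_i I_i$ under every fiber map, and your direct route (take the base point $p$ in $U$, note that every finite composition stays in $U$, and pass to the limit) avoids the contradiction.

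The second inclusion has a genuine gap, precisely at the ``delicate point'' you flag but do not resolve. You claim that a backward orbit realizing $y\in A_{G_i}=I_i$ ``can be realized by a sequence in $\Sigma_4$ using only the symbols $1$ and $2$'' and conclude $A_{G_i}\subset A_F$. But membership in $S_F$ requires the spine $\bigcap_n f_{\vartheta_{-1}}\circ\cdots\circ f_{\vartheta_{-n}}([0,1])$ to be a singleton on all of $[0,1]$, not merely on $I_i$. The $\mathcal K$-pair hypothesis controls only $f_{1|I_i}$ and $f_{2|I_i}$; outside $\bigcup_i I_i$ the maps $f_1,f_2$ are essentially unconstrained, so a $\{1,2\}$-sequence that contracts $I_i$ to a point need not contract $[0,1]$ to a point. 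For instance, if $f_1$ and $f_2$ share a fixed point $r$ in the gap between $I_1$ and $I_2$ (compatible with (a)--(c)), then $r$ lies in every set $f_{\vartheta_{-1}}\circ\cdots\circ f_{\vartheta_{-n}}([0,1])$ for every $\{1,2\}$-sequence; since the spine also meets $I_i$, it is never a singleton, no such $\vartheta$ lies in $S_F$, and the intended inclusion $A_{G_i}\subset A_F$ fails. Your closing remark that $f_3,f_4$ play no role in the second inclusion is the red flag: the paper uses the fixed point $q$ of the global contraction $f_3$ as an anchor (the constant tail $\ldots333$ puts $q$ in $A_F$), pushes it into $I_i$ via $q_i\eqdef f_4^{i-1}(q)\in I_i\cap\overline{A_F}$ by $\mathcal B_F$-invariance of $\overline{A_F}$, and then propagates $q_i$ densely through $I_i$ using the minimality of $\mathrm{IFS}(f_{1|I_i},f_{2|I_i})$ from Remark~\ref{r.kpairs}. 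You can salvage your Hutchinson-target argument by prepending, at the deep negative coordinates, a tail of $3$'s followed by $(i-1)$ $4$'s to the $\{1,2\}$-blocks, which forces membership in $S_F$ and lands the limit in $I_i$; but at that point you have reproduced the paper's argument.
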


\begin{proof}
Observe first that, by construction, the set $\bigcup_{i=1}^{m} I_{i}$
is invariant under the Barnsley-Hutchinson operator $\mathcal{B}_F$ induced by $F$, 
recall \eqref{e.BH} for the definition. 

To prove the inclusion  ``$\subset$'' we argue by contradiction,
 suppose that there is $x\in \overline{A_{F}}$ that 
does not belong in $\bigcup_{i=1}^{m} I_{i}$.  Since this union is compact, 
there is an open neighborhood $V$ of $x$ such that $ V\cap \bigcup_{i=1}^{m} I_{i}=\emptyset$. 
By definition of the target set there is a composition $f_{a_{1}}\circ\dots \circ f_{a_{\ell}}$ such that 
$$
f_{a_{1}}\circ\dots \circ f_{a_{\ell}}([0,1])\subset V.
$$
This contradicts the invariance of  $\bigcup_{i=1}^{m} I_{i}$ and  proves the inclusion ``$\subset$''.

We now see  that $I_{i}\subset \overline{A_{F}}$ for every $i$. We  first 
claim that $I_i\cap  \overline{A_{F}}\ne \emptyset$.
For that consider the fixed point $q$ of the contraction $f_3$ and note that $q\in \overline{A_{F}}$ and
that, 
by construction, $q_i\eqdef  f_4^{i-1}(q)\in I_i$ for every  for $i\in \{2,\cdots,m\}$.
The 
$\mathcal{B}_{F}$-invariance of
 $\overline{A_{F}}$ implies that
 $I_i\cap \overline{A_{F}}\ne \emptyset$.
  
By the minimality of $\mathfrak{G_i}=\mathrm{IFS}(f_{1|I_{i}}, f_{2|I_{i}})$ 
(recall Remark~\ref{r.kpairs})
we have 
 that the $\mathfrak{G_{i}}$-orbit of each $p_i$ is dense  in $I_{i}$.
 Since $\overline{A_{F}}$ is $\mathcal{B}_{F}$-invariant it follows that
 $I_i\subset \overline{A_{F}}$. The proof of the lemma is now complete.
\end{proof}

%

Denote by $\Sigma_4 (\overline{12}\,^-)$ the subset of $\Sigma_4$ consisting of the sequences  $\eta=\eta^-.
\eta^+$ with $\eta^-=12121212\dots$.

\begin{lema}
\label{l.mdisjoint}
Let  $\vartheta \in  \Sigma_k (\overline{12}\,^-)$.
Then
$(\{\vartheta\}\times [0,1])\cap  \overline{\mathrm{graph}\,\rho}$ 
is the union of $m$ disjoint nontrivial intervals.
\end{lema}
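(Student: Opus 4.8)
The plan is to use the characterization of $\overline{\mathrm{graph}\,\rho}$ from the remark following Theorem~\ref{mt.mixing} (combined with item (2) of that theorem), namely that the fiber over $\vartheta$ is exactly the spine computed relative to the target set,
$$
(\{\vartheta\}\times[0,1])\cap\overline{\mathrm{graph}\,\rho}=\{\vartheta\}\times I_\vartheta^A,
\qquad I_\vartheta^A=\bigcap_{n\ge 1}f_{\vartheta_{-1}}\circ\cdots\circ f_{\vartheta_{-n}}\big(\overline{A_F}\big),
$$
together with Lemma~\ref{l.overlinetarget}, which tells us $\overline{A_F}=\bigcup_{i=1}^m I_i$. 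Since $\vartheta\in\Sigma_4(\overline{12}\,^-)$, the negative part of $\vartheta$ is periodic with period $12$, so the backward compositions appearing in $I_\vartheta^A$ are precisely the iterates of the single map $g\eqdef f_1\circ f_2$ (alternating; more precisely one should track the two interleaved subsequences, but the tail behavior is governed by $g=f_1\circ f_2$ and by $f_2\circ f_1$, which are conjugate). Thus $I_\vartheta^A=\bigcap_{n\ge 1} g^n\big(\bigcup_i I_i\big)$, up to applying $f_2$ once.

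First I would record that each $f_1, f_2$ maps $I_i$ into $I_i$ (property (a)), hence $g=f_1\circ f_2$ maps each $I_i$ into itself, and the sets $g^n(I_i)$ are nested and decreasing. Therefore $\bigcap_{n\ge1} g^n(\bigcup_i I_i)=\bigcup_{i=1}^m \big(\bigcap_{n\ge1}g^n(I_i)\big)$, and it remains to show that each $\bigcap_{n\ge1}g^n(I_i)$ is a nontrivial closed interval, and that these $m$ sets are pairwise disjoint. Disjointness is immediate since $\bigcap_n g^n(I_i)\subset I_i$ and the $I_i$ are pairwise disjoint by construction ($a_1=0<b_1<b_2<\cdots<b_m=1$). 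Nontriviality is the crux: on $I_i$ the pair $(f_{1|I_i},f_{2|I_i})$ is a $\mathcal{K}$-pair, so by item (2) of Definition~\ref{d.kpair} the composition $f_{1|I_i}\circ f_{2|I_i}=g|_{I_i}$ has a repelling fixed point $r_i$ in the interior of $I_i$. Since $g$ maps $I_i=[a_i,b_i]$ (with $a_i$ attracting for $f_1$, $b_i$ attracting for $f_2$, so $g(a_i)=f_1(f_2(a_i))$ lies near $a_i$ and $g(b_i)$ near $b_i$) into itself with a repelling interior fixed point, the intersection $\bigcap_n g^n(I_i)$ is a closed interval containing $r_i$ in its interior — because $g$ being an increasing homeomorphism of $I_i$ onto its image, $g^n(I_i)$ is a nested sequence of closed intervals each containing $r_i$ and, by the repelling nature of $r_i$ (derivative $>1$), they do not shrink to the point $r_i$; rather their lengths are bounded below. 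Concretely, $g$ restricted to a small interval around $r_i$ is expanding, so $g^n(I_i)$ contains a fixed neighborhood of $r_i$ for all $n$; hence $\bigcap_n g^n(I_i)$ is a nontrivial interval.

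One should be slightly careful with the alternating structure: with $\vartheta^-=\cdots 121212$, reading $f_{\vartheta_{-1}}\circ f_{\vartheta_{-2}}\circ\cdots$ gives $f_1\circ f_2\circ f_1\circ f_2\circ\cdots$, so $I_\vartheta^A=\bigcap_n (f_1\circ f_2)^n(\bigcup_i I_i)$ when $n$ is taken through even indices, and through odd indices one gets $f_1\big(\bigcap_n(f_2\circ f_1)^n(\bigcup_i I_i)\big)$; since the limiting set along a nested decreasing sequence does not depend on whether we stop at even or odd stages (intermediate stages are squeezed), the two agree and equal $\bigcap_n(f_1\circ f_2)^n(\bigcup_i I_i)$. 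The invariance $f_j(I_i)\subset I_i$ guarantees nestedness along every subsequence, so I would phrase this as: the decreasing intersection of $f_{\vartheta_{-1}}\circ\cdots\circ f_{\vartheta_{-n}}(\bigcup_i I_i)$ over all $n$ equals $\bigcup_i\big(\bigcap_n f_{\vartheta_{-1}}\circ\cdots\circ f_{\vartheta_{-n}}(I_i)\big)$, and each such fiberwise intersection is a nontrivial interval by the $\mathcal{K}$-pair property applied on $I_i$.

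The main obstacle I anticipate is making the ``does not shrink to a point'' step fully rigorous: one needs that the nested intervals $g^n(I_i)$ have lengths bounded away from $0$. This follows because $g|_{I_i}$ has a repelling fixed point $r_i$ with $|g'(r_i)|>1$, so there is $\delta>0$ and a neighborhood $U_i=[r_i-\delta, r_i+\delta]\subset I_i$ with $g(U_i)\supset U_i$ (taking $\delta$ small enough that $g'>1$ on $U_i$ and $g(U_i)$ still fits inside $I_i$, using that $g$ is an increasing $C^1$ map with $g(r_i)=r_i$); then by induction $g^n(I_i)\supset g^n(U_i)\supset U_i$ for all $n$, so $\bigcap_n g^n(I_i)\supset U_i$ is a nontrivial interval. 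Also $\bigcap_n g^n(I_i)$ is an intersection of nested compact intervals hence itself a (nonempty, closed) interval, and it is contained in $I_i$, giving the disjointness. This completes the identification of $(\{\vartheta\}\times[0,1])\cap\overline{\mathrm{graph}\,\rho}$ with a union of $m$ disjoint nontrivial intervals.
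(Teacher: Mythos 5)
Your proposal is correct and follows essentially the same route as the paper: identify the fiber with $I_\vartheta^A=\bigcap_n f_{\vartheta_{-1}}\circ\cdots\circ f_{\vartheta_{-n}}(\overline{A_F})$, decompose $\overline{A_F}=\bigcup_i I_i$ (Lemma~\ref{l.overlinetarget}), swap intersection and union using $f_j(I_i)\subset I_i$ and disjointness of the $I_i$ (Claim~\ref{c.restriction}), and invoke the repelling fixed point of $f_{1|I_i}\circ f_{2|I_i}$ for nontriviality. The one place you add genuine content is the nontriviality step, which the paper states without proof: your argument that a $\delta$-neighborhood $U_i$ of the repeller satisfies $g(U_i)\supset U_i$ (hence $U_i\subset g^n(I_i)$ for all $n$) is the correct way to make it rigorous.
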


\begin{proof}
From item (2) in Theorem \ref{mt.mixing} it follows that  $\overline{\mathrm{graph}\,\rho}=\Lambda_{F_A}$
and 
 by the characterization
of $\Lambda_{F_A}$
in Proposition \ref{p.maximalattractor} it follows  that 
$$
 \overline{\mathrm{graph}\,\rho}=\bigcup_{\vartheta\in \Sigma_k} \{\vartheta\}\times I^{\mathrm{t}}_{\vartheta},
\quad
\mbox{where} 
\quad
I^{\mathrm{t}}_{\vartheta}=\bigcap_{n\geq 1} f_{\vartheta_{-1}}\circ\cdots\circ f_{\vartheta_{-n}}(\overline{A_{F}}). 
$$
Therefore 
$$
(\{\vartheta\}\times [0,1])\cap  \overline{\mathrm{graph}\,\rho}=
\{\vartheta\}\times I^{\mathrm{t}}_{\vartheta}
\quad
\mbox{for every $\vartheta\in \Sigma_{k}$.}
$$ 
Recall that by Lemma~\ref{l.overlinetarget} we have $\overline{A_{F}}= \bigcup_{i=1}^{m} I_{i}$, which implies that for every $\vartheta$
 $$
I^{\mathrm{t}}_{\vartheta}= \bigcap_{n\geq 1} f_{\vartheta_{-1}}\circ\cdots\circ f_{\vartheta_{-n}}
(\overline{A_{F}})=
\bigcap_{n\geq 1} \bigcup_{i=1}^{m}  f_{\vartheta_{-1}}\circ\cdots\circ f_{\vartheta_{-n}}(I_{i}).
$$

Let $\vartheta=\vartheta^-.\vartheta^+\in \Sigma_4$ be a sequence 
such that $\vartheta^-$ consists of $1$ and $2$ and
$I_{\vartheta}^{\mathrm{t}}(i)$ be the spine of $\vartheta$ with respect to the $\mathrm{IFS}(f_{1|I_{i}}, f_{2|I_{i}})$, that is
$$
I_{\vartheta}^{\mathrm{t}}(i)\eqdef
 \bigcap_{n\geq 1}  f_{\vartheta_{-1}}\circ\cdots\circ f_{\vartheta_{-n}}(I_{i}).
$$

\begin{claim}\label{c.restriction} 
For every $i\in \{1,\dots, m\}$ and every sequence $\vartheta=\vartheta^-.\vartheta^+\in \Sigma_4$ such that $\vartheta^-$ consists of $1$ and $2$  it holds
$$
I^{\mathrm{t}}_{\vartheta}=\bigcup_{i=1}^{m}I_{\vartheta}^{\mathrm{t}}(i).
$$
 \end{claim}
 
\begin{proof}
Recalling the definition of $I_{\vartheta}^{\mathrm{t}}(i)$ we have that 
it is enough to
see that 
$$
\bigcap_{n\geq 1} \bigcup_{i=1}^{m}  f_{\vartheta_{-1}}\circ\cdots\circ f_{\vartheta_{-n}}(I_{i})=
\bigcup_{i=1}^{m} \bigcap_{n\geq 1}  f_{\vartheta_{-1}}\circ\cdots\circ f_{\vartheta_{-n}}(I_{i}).
$$
The inclusion $``\supset  "$ it is straightforward.
To get the inclusion $``\subset "$ take a point $p\in \bigcap_{n\geq 1} \bigcup_{i=1}^{m}  f_{\vartheta_{-1}}\circ\cdots\circ f_{\vartheta_{-n}}(I_{i})$ and note that for every $n\geq 1$
there is $i_{n}$ such that $p\in f_{\vartheta_{-1}}\circ\cdots\circ f_{\vartheta_{-n}}(I_{i_{n}})$. 
Since $f_{j}(I_{i})\subset I_{i}$ for every $j=1,2$ and every $i\in \{1,\dots,m\}$ and 
 the intervals $I_{i}$'s are pairwise disjoint we conclude that $i_{n}$ is independent of $n$,
say $i_n=s$ for all $n\geq 1$.
Therefore
$$
p\in f_{\vartheta_{-1}}\circ\cdots\circ f_{\vartheta_{-n}}(I_{s}) \quad \mbox{for every}\quad n\geq 1
\quad \implies \quad
p\in \bigcup_{i=1}^{m} \bigcap_{n\geq 1}  f_{\vartheta_{-1}}\circ\cdots\circ f_{\vartheta_{-n}}(I_{i}),
$$
proving the claim.
\end{proof}

Note that the fact that  $f_{1|I_{i}}\circ f_{2|I_{i}}$ has a repelling fixed point for  every $i\in \{1,\dots, m\}$
implies that
 $I_{\xi}^{\mathrm{t}}(i)$ is a nontrivial interval for every $i\in \{1,\dots, m\}$. 
Claim \ref{c.restriction} now implies that for every $\vartheta \in \Sigma_4 (\overline{12}\,^-)$ we have
that
$I_{\xi}^{\mathrm{t}}$ is the union 
of $m$ disjoint nontrivial intervals, proving the lemma.
\end{proof}

Consider the subset $\Sigma_4(\overline{12}_\infty^{\,-})$ of
$\Sigma_{4}$ consisting of sequences $\vartheta$ such that there is $\ell>0$ with
$\sigma^{-\ell}(\vartheta) \in \Sigma_4(\overline{12}\,^-)$. This set is dense in $\Sigma_4$.

\begin{lema}\label{l.finall} 
The spine $I_{\vartheta}^{\mathrm{t}}$ is the union of $m$ disjoint nontrivial intervals
for every $\vartheta\in \Sigma_4(\overline{12}^{-}_{\infty})$.
\end{lema}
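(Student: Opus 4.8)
The plan is to reduce the general case $\vartheta\in\Sigma_4(\overline{12}_\infty^{\,-})$ to the already-established case $\vartheta\in\Sigma_4(\overline{12}\,^-)$ handled in Lemma~\ref{l.mdisjoint}, using the hyperbolic (contracting) behaviour of the maps $f_1,f_2$ on each interval $I_i$ coming from the $\mathcal K$-pair structure. By definition, if $\vartheta\in\Sigma_4(\overline{12}_\infty^{\,-})$ there is $\ell>0$ with $\eta\eqdef\sigma^{-\ell}(\vartheta)\in\Sigma_4(\overline{12}\,^-)$, so $\vartheta_{-1}\dots\vartheta_{-\ell}$ is an arbitrary finite word in $\{1,2,3,4\}$ and $\vartheta_{-\ell-1}\vartheta_{-\ell-2}\dots=1212\dots$. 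The key observation is the relation between the spines: since $\vartheta_{-j}=\eta_{-j+\ell}$ for $j>\ell$, one has, writing $g\eqdef f_{\vartheta_{-1}}\circ\cdots\circ f_{\vartheta_{-\ell}}$,
\[
I^{\mathrm t}_{\vartheta}=\bigcap_{n\ge 1} f_{\vartheta_{-1}}\circ\cdots\circ f_{\vartheta_{-n}}(\overline{A_F})
= g\Big(\bigcap_{n\ge 1} f_{\eta_{-1}}\circ\cdots\circ f_{\eta_{-n}}(\overline{A_F})\Big)=g\big(I^{\mathrm t}_{\eta}\big),
\]
where the middle equality uses that $g$ is continuous and injective (the fiber maps are injective, property stated in the theorem) so it commutes with the decreasing intersection of compact sets. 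By Lemma~\ref{l.mdisjoint}, $I^{\mathrm t}_{\eta}$ is a union of $m$ disjoint nontrivial intervals $I^{\mathrm t}_{\eta}(1),\dots,I^{\mathrm t}_{\eta}(m)$ with $I^{\mathrm t}_{\eta}(i)\subset I_i$.

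Next I would show that $g$ maps this configuration to another union of $m$ disjoint nontrivial intervals. Each $g=f_{\vartheta_{-1}}\circ\cdots\circ f_{\vartheta_{-\ell}}$ is a composition of strictly increasing continuous injective maps, hence $g$ is itself strictly increasing and continuous, so it sends any nontrivial interval to a nontrivial interval and preserves disjointness. Therefore $g(I^{\mathrm t}_\eta)=\bigcup_{i=1}^m g\big(I^{\mathrm t}_\eta(i)\big)$ is automatically a union of $m$ pairwise disjoint nontrivial intervals. This already gives the statement, with no further work on the $I_i$ needed — the injectivity and monotonicity of the fiber maps do all the work. I would spell out the ``commutes with the intersection'' step carefully: for a continuous injective map $g$ on a compact metric space and a decreasing sequence of nonempty compacts $K_n$, $g(\bigcap_n K_n)=\bigcap_n g(K_n)$; the inclusion $\subset$ is trivial and $\supset$ follows since if $y\in\bigcap_n g(K_n)$ then $g^{-1}(y)\cap K_n\ne\emptyset$ for all $n$, and these form a decreasing sequence of nonempty compacts (using injectivity, $g^{-1}(y)$ is a single point, so in fact $g^{-1}(y)\in K_n$ for all $n$).

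The only mild subtlety — and the step I expect to require the most care — is confirming that the finite composition $g$ is genuinely injective (equivalently strictly increasing) on the relevant domain: property (c) says $f_4(I_i)\subset\mathrm{int}(I_{i+1})$, property (b) that $f_3$ is a contraction, and property (a) that $f_1,f_2$ are strictly increasing on each $I_i$; since all four of $f_1,f_2,f_3,f_4$ are strictly increasing $C^1$ maps of $[0,1]$ (as stipulated at the start of the construction of $\mathcal S_m$), any composition of them is strictly increasing on $[0,1]$, hence injective, so there is in fact nothing delicate here once one records that the $f_j$ are globally strictly increasing on $[0,1]$ and not merely on the $I_i$. With that in hand the lemma follows immediately: for every $\vartheta\in\Sigma_4(\overline{12}_\infty^{\,-})$, $I^{\mathrm t}_\vartheta=g(I^{\mathrm t}_\eta)$ is a union of $m$ disjoint nontrivial intervals. \hfill\qed
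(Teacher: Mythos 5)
Your proof is correct and follows essentially the same route as the paper's: reduce to $\eta=\sigma^{-\ell}(\vartheta)\in\Sigma_4(\overline{12}\,^-)$, use the identity $I^{\mathrm t}_\vartheta = f_{\vartheta_{-1}}\circ\cdots\circ f_{\vartheta_{-\ell}}(I^{\mathrm t}_\eta)$, invoke Lemma~\ref{l.mdisjoint}, and observe that a composition of strictly increasing maps sends $m$ disjoint nontrivial intervals to $m$ disjoint nontrivial intervals. The only difference is that you explicitly justify why $g$ commutes with the decreasing intersection (using injectivity of $g$ and compactness of the $K_n$), a step the paper's proof asserts without comment; that is a sensible and correct expansion rather than a deviation.
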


\begin{proof}
Fix  $\vartheta\in \Sigma_4(\overline{12}^{-}_\infty)$ and let $\ell$ such that
$\sigma^{-\ell} (\vartheta)= \xi \in \Sigma_4(\overline{12}^-)$.
Observe that
$$
I_{\vartheta}^{\mathrm{t}}=f_{\vartheta_{-1}}\circ \cdots \circ f_{\vartheta_{-\ell}}(I_{\xi}^{\mathrm{t}}).
$$  
By Lemma~\ref{l.mdisjoint} the set $I_{\xi}^{\mathrm{t}}$
is a union of $m$ nontrivial disjoint intervals and since the maps $f_{i}$'s are monotone, 
we conclude that 
$I_{\vartheta}^{\mathrm{t}}$ is also  a disjoint union of $m$ nontrivial intervals, proving the lemma.
 \end{proof}
Lemma~\ref{l.finall} and the density of $\Sigma_4(\overline{12}^{\,-}_\infty)$ in $\Sigma_4$ imply
the second part of
Proposition~\ref{p.bones}.
\end{proof}

 \section{Global attracting measures. Proof of Theorem~\ref{mt.skewproduct}}
\label{s.ergodicstepskew}
 
\subsection{Preliminaries}\label{ss.ppreliminaries}
We now introduce the main ingredients in the proof of Theorem~\ref{mt.skewproduct}.
Recall the definition of the set
  $\mathcal{M}_{\mathbb{P}}$ of probability measures on
$\Sigma_{k}\times M$ with marginal $\mathbb{P}$ \eqref{e.marginal}.
We begin with the following (certainly well-known) proposition whose proof is included for completeness.

\begin{prop}\label{p.marginal}
The set $\mathcal{M}_\mathbb{P}$ is $F_*$-invariant and compact. 
\end{prop}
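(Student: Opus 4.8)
The plan is to treat the two assertions --- compactness and $F_*$-invariance --- separately, both being soft consequences of the functoriality and the weak-$*$ continuity of the pushforward operation on measures.

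For compactness, first I would invoke that $\Sigma_k \times M$ is a compact metric space, so that $\mathcal{M}_1(\Sigma_k \times M)$ is compact in the weak-$*$ topology. Next I would check that the pushforward map $\Pi_*\colon \mathcal{M}_1(\Sigma_k \times M) \to \mathcal{M}_1(\Sigma_k)$ associated to the continuous projection $\Pi$ is weak-$*$ continuous: for $\phi \in C(\Sigma_k)$ one has $\int \phi \, d(\Pi_*\mu) = \int (\phi \circ \Pi)\, d\mu$ with $\phi \circ \Pi \in C(\Sigma_k \times M)$, so $\mu_n \to \mu$ implies $\Pi_*\mu_n \to \Pi_*\mu$. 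Consequently $\mathcal{M}_\mathbb{P} = \Pi_*^{-1}(\{\mathbb{P}\})$ is the preimage of a point under a continuous map, hence closed in the compact space $\mathcal{M}_1(\Sigma_k \times M)$, hence compact; it is moreover nonempty, e.g. $\mathbb{P}\times\mathfrak{m} \in \mathcal{M}_\mathbb{P}$ for any $\mathfrak{m} \in \mathcal{M}_1(M)$.

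For $F_*$-invariance, the key observation is the semiconjugacy $\Pi \circ F = \sigma \circ \Pi$, which follows immediately from $F(\vartheta, x) = (\sigma(\vartheta), f_{\vartheta_0}(x))$ in \eqref{defiskew2}. Given $\mu \in \mathcal{M}_\mathbb{P}$, I would then compute, using functoriality of the pushforward and that $\mathbb{P}$ is $\sigma$-invariant,
$$
\Pi_*(F_*\mu) = (\Pi \circ F)_*\mu = (\sigma \circ \Pi)_*\mu = \sigma_*(\Pi_*\mu) = \sigma_*\mathbb{P} = \mathbb{P},
$$
whence $F_*\mu \in \mathcal{M}_\mathbb{P}$.

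I do not expect a real obstacle: this statement is of the routine, ``included for completeness'' type. The only points meriting a little care are verifying the weak-$*$ continuity of $\Pi_*$ (rather than taking it for granted) and making explicit where the measure-preserving hypothesis on $\mathbb{P}$ is used --- namely in the final equality $\sigma_*\mathbb{P} = \mathbb{P}$.
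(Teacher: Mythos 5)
Your proof is correct and follows essentially the same route as the paper: compactness via closedness inside the compact $\mathcal{M}_1(\Sigma_k\times M)$ (you phrase it as $\mathcal{M}_\mathbb{P}=\Pi_*^{-1}(\{\mathbb{P}\})$ with $\Pi_*$ weak-$*$ continuous, the paper does the equivalent sequential check), and invariance via $\Pi\circ F=\sigma\circ\Pi$ together with $\sigma_*\mathbb{P}=\mathbb{P}$. The added remarks on nonemptiness and on where $\sigma$-invariance of $\mathbb{P}$ enters are minor embellishments, not a different argument.
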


\begin{proof}
As $\mathcal{M}_\mathbb{P}$ is a subset of the compact set $\mathcal{M}_1(\Sigma_k\times M)$,
to prove its compacity 
it is enough to see that it is closed.
Let $(\mu_{n})$ be a sequence in $\mathcal{M}_{\mathbb{P}}$ 
such that $\mu_{n}\to \mu$. We need to see that $\mu\in \mathcal{M}_{\mathbb{P}}$. 
Consider the projection  on the first factor $\Pi(\vartheta,x)=\vartheta$. By the continuity 
of $\Pi$ and the definition of  $\mathcal{M}_{\mathbb{P}}$  we have that $\mathbb{P}=\Pi_*\mu_{n}$ and
$$
\mathbb{P}=\Pi_*\mu_{n}\to \Pi_*\mu,
$$
proving that $\mathbb{P}=\Pi_*\mu$ and hence $\mu \in \mathcal{M}_{\mathbb{P}}$.

To prove that
$F_{*}(\mathcal{M}_{\mathbb{P}})
\subset \mathcal{M}_{\mathbb{P}}$  note that 
$\Pi\circ F=\sigma \circ\Pi.$
Thus for $\mu\in \mathcal{M}_{\mathbb{P}}$  we have 
$$
\Pi_*\, F_{*}\mu=(\Pi\circ F)_{*}\mu=(\sigma\circ
\Pi)_*\mu =\sigma_*\, \Pi_* \mu=
\sigma_* \mathbb{P}=
\mathbb{P}.
$$
This ends the proof of  the proposition.
 \end{proof}
 
Given a compact metric space $Z$, we denote by $\mathscr{B}(Z)$ 
its Borel $\sigma$-algebra.

\begin{defi}[Disintegration of a measure]
\label{d.disintegration}
\emph{
Let $\mathbb{P}$ be a probability measure defined on $\Sigma_{k}$ and $\mu\in \mathcal{M}_{\mathbb{P}}$. 
A function
$$
  (\vartheta ,B) \in \Sigma_{k}\times \mathscr{B}(M)
 \mapsto  \mu_\vartheta (B)\in [0,1]
 $$ 
 is a \emph{disintegration of $\mu$
 with respect to $\mathbb{P}$} if 
 \begin{enumerate}
 \item
for every  $B\in \mathscr{B}(M)$ the map  
defined on $\Sigma_k$ by
$\vartheta\mapsto \mu_{\vartheta}(B)$ is $\mathscr{B}(\Sigma_{k})$-measurable,
 \item 
 for $\mathbb{P}$-a.e. $\vartheta\in \Sigma_{k}$,
 the map  
defined on $\mathscr{B}(M)$ by
 $B\mapsto \mu_{\vartheta}(B)$ is a probability measure on $(M, \mathscr{B}(M))$,
 \item
 for every $A\in \mathscr{B}(\Sigma_{k}\times M)$ 
 $$
\mu(A)= \int \mu_\vartheta (A^\vartheta)\, d\mathbb{P} (\vartheta),
$$
where 
$A^\vartheta\eqdef \{ x \in M \colon (\vartheta, x) \in  A\}$  is the
{\emph{$\vartheta$-section}} of $A$.
 \end{enumerate}
We denote the disintegration of  a measure $\mu$ above by $(\mu_\vartheta)_{\vartheta \in \Sigma_k}$}.
\end{defi}

There is the following result about existence and uniqueness of disintegrations.

\begin{prop}[Proposition 1.4.3 in \cite{Arnold}]
For every $\mu\in \mathcal{M}_{\mathbb{P}}$
its disintegration 
with respect to $\mathbb{P}$
exists and is $\mathbb{P}$-a.e. unique.
\end{prop}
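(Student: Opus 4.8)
The plan is to invoke the classical disintegration (Rokhlin) theorem, which applies because $\Sigma_k$ and $M$ are compact metric spaces, so $\Sigma_{k}\times M$ is a standard Borel space and $\mathscr{B}(\Sigma_{k}\times M)=\mathscr{B}(\Sigma_{k})\otimes\mathscr{B}(M)$. Write $\Pi\colon\Sigma_{k}\times M\to\Sigma_{k}$ and $\pi_M\colon\Sigma_{k}\times M\to M$ for the two projections and set $\mathscr{G}\eqdef\Pi^{-1}(\mathscr{B}(\Sigma_{k}))\subset\mathscr{B}(\Sigma_{k}\times M)$, which we identify with $\mathscr{B}(\Sigma_{k})$. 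Since $M$ is compact metric, $C(M)$ is separable in the sup norm; fix a countable $\mathbb{Q}$-linear subspace $\mathcal{D}\subset C(M)$ that is dense in $C(M)$ and contains the constant $1$.

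First I would build the candidate fibre measures. For each $\phi\in\mathcal{D}$ choose a fixed version $\Lambda_\phi\in L^{1}(\mathbb{P})$ of the conditional expectation $E_\mu[\phi\circ\pi_M\mid\mathscr{G}]$, regarded as a function of $\vartheta$. Because $\mathcal{D}$ is countable, there is a single set $\Sigma_0\subset\Sigma_k$ with $\mathbb{P}(\Sigma_0)=1$ on which the countably many almost-everywhere identities expressing positivity ($\Lambda_\phi\ge0$ whenever $\phi\ge0$), $\mathbb{Q}$-linearity ($\Lambda_{a\phi+b\psi}=a\Lambda_\phi+b\Lambda_\psi$), normalisation ($\Lambda_1\equiv1$) and the bound $|\Lambda_\phi|\le\sup|\phi|$ hold simultaneously and pointwise. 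For $\vartheta\in\Sigma_0$ the map $\phi\mapsto\Lambda_\phi(\vartheta)$ is thus a positive, normalised, sup-norm-continuous $\mathbb{Q}$-linear functional on $\mathcal{D}$, hence extends uniquely to a positive normalised linear functional on $C(M)$, and by the Riesz representation theorem it is integration against a unique probability measure $\mu_\vartheta$ on $(M,\mathscr{B}(M))$; for $\vartheta\notin\Sigma_0$ set $\mu_\vartheta$ equal to a fixed probability measure on $M$.

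Next I would verify the three defining properties. Measurability of $\vartheta\mapsto\mu_\vartheta(B)$ holds first for $\mathbf{1}_B$ replaced by $\phi\in C(M)$, since $\int\phi\,d\mu_\vartheta=\Lambda_\phi(\vartheta)$ on $\Sigma_0$ (the identity extends from $\mathcal{D}$ to $C(M)$ by density), and then for all Borel $B$ by a monotone-class argument. Property (2) holds by construction on $\Sigma_0$. For the integration formula (3), by the monotone class theorem it suffices to check $\mu(\Pi^{-1}(A)\cap\pi_M^{-1}(B))=\int_A\mu_\vartheta(B)\,d\mathbb{P}(\vartheta)$ for $A\in\mathscr{B}(\Sigma_k)$ and $B\in\mathscr{B}(M)$; by a further monotone-class/bounded-convergence argument in $B$ this reduces to the case where $\mathbf{1}_B$ is replaced by $\phi\in C(M)$, where it is exactly the defining property of the conditional expectation, $\int_{\Pi^{-1}(A)}\phi\circ\pi_M\,d\mu=\int_A E_\mu[\phi\circ\pi_M\mid\mathscr{G}]\,d\mathbb{P}=\int_A\Lambda_\phi(\vartheta)\,d\mathbb{P}(\vartheta)=\int_A\bigl(\int\phi\,d\mu_\vartheta\bigr)\,d\mathbb{P}(\vartheta)$.

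Finally, for $\mathbb{P}$-a.e.\ uniqueness: if $(\mu_\vartheta)$ and $(\mu'_\vartheta)$ are two disintegrations, fix a countable algebra $\mathcal{A}$ generating $\mathscr{B}(M)$; for each $B\in\mathcal{A}$ and each $A\in\mathscr{B}(\Sigma_k)$, property (3) gives $\int_A\mu_\vartheta(B)\,d\mathbb{P}=\mu(\Pi^{-1}(A)\cap\pi_M^{-1}(B))=\int_A\mu'_\vartheta(B)\,d\mathbb{P}$, so $\mu_\vartheta(B)=\mu'_\vartheta(B)$ for $\mathbb{P}$-a.e.\ $\vartheta$, and intersecting the countably many co-null sets over $B\in\mathcal{A}$ yields a co-null set on which $\mu_\vartheta$ and $\mu'_\vartheta$ agree on $\mathcal{A}$, hence on $\mathscr{B}(M)$. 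The main technical obstacle is the first step: producing a \emph{single} null set off of which the functionals $\phi\mapsto\Lambda_\phi(\vartheta)$ are simultaneously positive, linear, normalised and continuous, so that Riesz representation applies fibrewise; the separability of $C(M)$ (equivalently, second countability of $M$) is precisely what makes this possible, and once it is in place the remaining verifications are routine monotone-class arguments.
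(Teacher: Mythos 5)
Your proof is correct, but note that the paper does not prove this proposition at all---it simply cites it as Proposition~1.4.3 of Arnold's \emph{Random Dynamical Systems}, which is a standard reference for the disintegration (Rokhlin) theorem. What you have written is a self-contained version of the classical argument, and it is the same argument found in such references: use separability of $C(M)$ to fix a countable $\mathbb{Q}$-linear dense subspace, take fixed versions of the conditional expectations $E_\mu[\phi\circ\pi_M\mid\Pi^{-1}(\mathscr{B}(\Sigma_k))]$, pass to a single co-null set on which the map $\phi\mapsto\Lambda_\phi(\vartheta)$ is simultaneously a positive, normalised, bounded $\mathbb{Q}$-linear functional, extend and apply Riesz representation fibrewise, and then verify measurability, the integration formula, and uniqueness by monotone-class arguments over a countable generating algebra. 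You have also correctly isolated where the topological hypotheses enter: compactness (hence second countability) of $M$ gives separability of $C(M)$, which is exactly what lets you pull the uncountably many ``a.e.'' qualifiers into a single null set. One small point you glossed over is that positivity of the extended functional on all of $C(M)$ (not just on $\mathcal{D}$) must be rechecked; this follows automatically from the standard fact that a bounded linear functional $L$ on $C(M)$ with $\|L\|=L(1)=1$ is positive, so it is not a gap, merely an unspoken step. Since the paper offers only a citation, there is no competing internal proof to contrast with; your write-up is simply the standard proof the citation points to.
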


We need the following definition.

 \begin{defi}[Isomorphic transformations]
 \label{d.isomorphic}
\emph{Consider pairs of probability spaces
 $(X_{1},\mathscr{B}_{1},\mu_{1})$ and $(X_{2},\mathscr{B}_{2},\mu_{2})$ and
 of measure-preserving transformations $T_{1}\colon X_{1}\to X_{1}$ and 
$T_{2}\colon X_{2}\to X_{2}$. The system $(T_{1},\mu_{1})$
 is \emph{isomorphic} to $(T_{2},\mu_{2})$ if there exist $M_{1}\in \mathscr{B}_{1}$, $M_{2}\in \mathscr{B}_{2}$
 with $\mu_{1}(M_{1})=\mu_{2}(M_{2})=1$ such that 
 \begin{enumerate}
\item 
$T_{i}(M_{i})\subset M_{i}$ for every $i=1,2$.
\item
There is an invertible measurable transformation $\phi\colon M_{1}\to M_{2}$
whose inverse is also measurable and
 satisfies $\phi_{*}\mu_{1}=\mu_{2}$ and 
$T_{2}\circ\phi(x)=\phi\circ T_{1}(x)$ for every $x\in M_{1}$.
\end{enumerate}}
\end{defi}

%

\subsection{Proof of Theorem~\ref{mt.skewproduct}}
Recall the definitions of the set  of weakly hyperbolic sequences $S_{F}\subset \Sigma_{k}$ in \eqref{e.stmenos}, of the coding map $\rho\colon 
S_{F}\to M$  in
 \eqref{e.varrhoproj} and of the map $\hat \rho \colon S_{F}\to S_{F} \times M$ in \eqref{ultst}.

The fact that $v_\mathbb{P}=\hat \rho_\ast \mathbb{P}$ is the global attractor of $F_{*|\mathcal{M}_{\mathbb{P}}}$
 follows from the next proposition whose proof is postponed.

\begin{prop}\label{p.l.ufrj}
It holds 
$
\lim_{n\to \infty}(F^{n}_{*}\mu)_{\vartheta}=
\delta_{\rho(\vartheta)},
$
for $\mathbb{P}$-almost every $\vartheta$.
\end{prop}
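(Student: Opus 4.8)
The plan is to compute the disintegration of $F^n_*\mu$ explicitly in terms of the disintegration $(\mu_\vartheta)_{\vartheta\in\Sigma_k}$ of $\mu$ and then read off the conclusion from the very definition of $S_F$. First I would handle the case $n=1$. From $F_*\mu(A)=\mu(F^{-1}A)$ and the identity $(F^{-1}A)^\vartheta=f_{\vartheta_0}^{-1}\big(A^{\sigma(\vartheta)}\big)$, the disintegration formula for $\mu$ gives
$$
F_*\mu(A)=\int (f_{\vartheta_0})_*\mu_\vartheta\big(A^{\sigma(\vartheta)}\big)\,d\mathbb{P}(\vartheta).
$$
Since the base map $\sigma$ is an \emph{invertible} $\mathbb{P}$-preserving transformation, $\sigma^{-1}_*\mathbb{P}=\mathbb{P}$, so substituting $\vartheta=\sigma^{-1}(\eta)$ and using $(\sigma^{-1}\eta)_0=\eta_{-1}$ turns the right-hand side into $\int (f_{\eta_{-1}})_*\mu_{\sigma^{-1}(\eta)}\big(A^{\eta}\big)\,d\mathbb{P}(\eta)$. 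The family $\eta\mapsto(f_{\eta_{-1}})_*\mu_{\sigma^{-1}(\eta)}$ is readily checked to be measurable (the symbol $\eta_{-1}$ is locally constant) and to consist of probability measures for $\mathbb{P}$-a.e.\ $\eta$, so uniqueness of disintegrations (Proposition 1.4.3 in \cite{Arnold}) yields $(F_*\mu)_\eta=(f_{\eta_{-1}})_*\mu_{\sigma^{-1}(\eta)}$ for $\mathbb{P}$-a.e.\ $\eta$. Iterating this identity $n$ times,
$$
(F^n_*\mu)_\eta=\big(f_{\eta_{-1}}\circ\cdots\circ f_{\eta_{-n}}\big)_*\,\mu_{\sigma^{-n}(\eta)}\qquad\text{for $\mathbb{P}$-a.e.\ $\eta$.}
$$

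The second step uses the geometry built into $S_F$. The measure above is supported on the compact set $K_n(\eta):=f_{\eta_{-1}}\circ\cdots\circ f_{\eta_{-n}}(M)$, and since $f_{\eta_{-(n+1)}}(M)\subset M$ the sets $K_n(\eta)$ decrease with $n$. For $\eta\in S_F$ we have, by the definitions \eqref{e.stmenos} and \eqref{e.varrhoproj}, $\bigcap_{n\ge 1}K_n(\eta)=\{\rho(\eta)\}$; hence $\operatorname{diam}K_n(\eta)\to 0$, because a decreasing sequence of nonempty compact subsets of $M$ whose intersection is a single point must have vanishing diameters. As $(F^n_*\mu)_\eta$ is a probability measure carried by $K_n(\eta)$ with $\rho(\eta)\in K_n(\eta)$, for every $\phi\in C(M)$ we get
$$
\left|\int\phi\,d(F^n_*\mu)_\eta-\phi(\rho(\eta))\right|\le\sup_{x\in K_n(\eta)}\bigl|\phi(x)-\phi(\rho(\eta))\bigr|\xrightarrow[n\to\infty]{}0,
$$
so $(F^n_*\mu)_\eta\to\delta_{\rho(\eta)}$ in the weak-$*$ topology.

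Finally I would handle the null-set bookkeeping. The disintegration $(\mu_\vartheta)$ is defined, and is a probability measure, only for $\vartheta$ outside some $\mathbb{P}$-null set $N$; since $\sigma$ is invertible and $\mathbb{P}$-invariant, $\bigcup_{n\ge 0}\sigma^{n}(N)$ is again $\mathbb{P}$-null, so the displayed formula for $(F^n_*\mu)_\eta$ holds simultaneously for all $n$ on a set of full measure, which we then intersect with $S_F$ (of full measure by the hypothesis $\mathbb{P}(S_F)=1$). On this set the assertion of the proposition holds. I do not expect a genuine obstacle: once the disintegration of $F^n_*\mu$ has been computed, the statement is essentially a reformulation of the definition of $S_F$, and the only slightly delicate points are the measurability and a.e.\ uniqueness of the candidate disintegration and the elementary diameter argument.
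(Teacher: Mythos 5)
Your proof is correct and follows essentially the same two-step route as the paper: first establish the disintegration formula $(F^n_*\mu)_\eta=(f_{\eta_{-1}}\circ\cdots\circ f_{\eta_{-n}})_*\mu_{\sigma^{-n}(\eta)}$ (the paper's Lemma~\ref{l.c.l.eraumclaim}, which you derive by iterating the $n=1$ case rather than computing directly on rectangles), and then use the definition of $S_F$ to force weak-$*$ convergence to $\delta_{\rho(\eta)}$ (the paper's Lemma~\ref{l.interessante}, which you phrase via shrinking diameters of nested compacta rather than via an explicit uniform-continuity $\epsilon$--$\delta$ argument). The differences are purely presentational.
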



To prove  item (1) in Theorem~\ref{mt.skewproduct}, take 
any continuous map $\phi \colon  \Sigma_{k}\times M \to \mathbb{R}$ and observe the following:
\[
\begin{split}
\lim_{n\to \infty}\int \phi(\vartheta,x) \, dF^{n}_{*}\mu(\vartheta,x)&
\underset{\tiny{(a)}}{=}
\lim_{n\to \infty}\int_{\Sigma_{k}}\int_{M} \phi(\vartheta,x)\,
d(F^{n}_{*}\mu)_{\vartheta}(x)\, d\mathbb{P}(\vartheta)\\
&\underset{\tiny{(b)}}{=}\int_{\Sigma_{k}} \Big( \lim_{n\to \infty}\int_{M} \phi(\vartheta,x)
d(F^{n}_{*}\mu)_{\vartheta}(x) \Big) \, d\mathbb{P}(\vartheta)\\
& \underset{\tiny{(c)}}{=}\int_{\Sigma_{k}}\int_{M} \phi(\vartheta,x)
\, d\delta_{\rho(\vartheta)}(x)\, d\mathbb{P}(\vartheta)
\underset{\tiny{(d)}}{=}\int \phi(\vartheta,x) \, d v_{\mathbb{P}}(\vartheta,x),
\end{split}
\]
where  (a) uses the definition of a disintegration,  (b) 
dominated convergence, (c) Proposition~\ref{p.l.ufrj}  and the definition of the weak$*$ limit,
and  (d)
 the definition of $v_\mathbb{P}$.
Since $\phi$ is any arbitrary continuous function, we 
conclude that $v_\mathbb{P}$ is a global attracting measure in $\mathcal{M}_\mathbb{P}$.
The continuity of $F_{*}$ immediately implies that $v_\mathbb{P}$ is the unique fixed point 
of $F_{*}$ whose marginal is $\mathbb{P}$. From the definition of $v_{\mathbb{P}}$ we have that 
$$
v_{\mathbb{P}}(A\times B)=\mathbb{P}(A\cap \rho^{-1}(B))=\int_{A} \delta_{\rho(\vartheta)}(B)\,
 d\mathbb{P}(\vartheta),
 \quad
\mbox{for every  $A\times B\subset\Sigma_{k}\times M$,}
$$
where $\mathds{1}_{\rho^{-1}(B)}$ is the characteristic function on $\rho^{-1}(B)$. This implies that
$ \delta_{\rho(\vartheta)}$ 
 is the disintegration of $v_{\mathbb{P}}$ with respect to $\mathbb{P}$,
ending the proof of item (1) in the Theorem.

\begin{proof}[Proof of Proposition~\ref{p.l.ufrj}] 
The proof of the lemma is a consequence of Lemmas~\ref{l.c.l.eraumclaim} and \ref{l.interessante}  below.

\begin{lema}
\label{l.c.l.eraumclaim}
Consider $\mu\in \mathcal{M}_\mathbb{P}$ and its disintegration $(\mu_\vartheta)_{\vartheta\in\Sigma_k}$
with respect to $\mathbb{P}$.
Then the disintegration
 of $F_{*}^{n}\mu$ with respect to $\mathbb{P}$ is given by the family of measures
 $$
 (f_{\vartheta_{-1}*}\ldots f_{\vartheta_{-n}*}\mu_{\sigma^{-n}(\vartheta)})_{\vartheta\in\Sigma_k}.
 $$
 \end{lema}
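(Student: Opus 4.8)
The plan is to verify directly that the family displayed in the statement is a disintegration of $F_*^n\mu$ with respect to $\mathbb{P}$; since disintegrations are $\mathbb{P}$-a.e.\ unique, this is enough. Write $g_\vartheta^n\eqdef f_{\vartheta_{-1}*}\cdots f_{\vartheta_{-n}*}\,\mu_{\sigma^{-n}(\vartheta)}$ for the candidate fiber measures. Property~(2) of the definition of a disintegration (being a probability measure for $\mathbb{P}$-a.e.\ $\vartheta$) will be immediate: $\mathbb{P}$ is $\sigma$-invariant, hence $\sigma^{-n}(\vartheta)$ lies in the full-$\mathbb{P}$-measure set on which $\mu_\bullet$ is a probability measure for $\mathbb{P}$-a.e.\ $\vartheta$, and push-forwards of probability measures are again probability measures. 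Property~(1) (measurability of $\vartheta\mapsto g_\vartheta^n(B)$ for fixed $B\in\mathscr{B}(M)$) will follow by partitioning $\Sigma_k$ into the finitely many clopen cylinders on which the finite block $\vartheta_{-n}\dots\vartheta_{-1}$ is constant; on each such cylinder $g_\vartheta^n(B)$ equals $\mu_{\sigma^{-n}(\vartheta)}\big((f_{\vartheta_{-1}}\circ\cdots\circ f_{\vartheta_{-n}})^{-1}(B)\big)$ with the composition fixed, so it is the composition of the continuous map $\sigma^{-n}$ with the $\mathscr{B}(\Sigma_k)$-measurable map supplied by property~(1) of the disintegration of $\mu$ (using that preimages of Borel sets under the continuous fiber maps are Borel).

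The core of the argument will be property~(3). Fix $A\in\mathscr{B}(\Sigma_k\times M)$. From $F^n(\eta,y)=(\sigma^n(\eta),f_\eta^n(y))$, with $f_\eta^n$ as in \eqref{eq:defcompo}, one reads off the section identity $\big(F^{-n}(A)\big)^\eta=(f_\eta^n)^{-1}\big(A^{\sigma^n(\eta)}\big)$. Inserting this into the disintegration formula for $\mu$ gives
\[
F_*^n\mu(A)=\mu\big(F^{-n}(A)\big)=\int_{\Sigma_k}\mu_\eta\Big((f_\eta^n)^{-1}\big(A^{\sigma^n(\eta)}\big)\Big)\,d\mathbb{P}(\eta)=\int_{\Sigma_k}(f_\eta^n)_*\mu_\eta\big(A^{\sigma^n(\eta)}\big)\,d\mathbb{P}(\eta).
\]
Since $\sigma$ is a homeomorphism of $\Sigma_k$ with $(\sigma^n)_*\mathbb{P}=\mathbb{P}$, substituting $\vartheta=\sigma^n(\eta)$ (whence $\eta=\sigma^{-n}(\vartheta)$, $f_\eta^n=f_{\vartheta_{-1}}\circ\cdots\circ f_{\vartheta_{-n}}$, and $(f_\eta^n)_*\mu_\eta=g_\vartheta^n$) turns the last integral into $\int_{\Sigma_k}g_\vartheta^n(A^\vartheta)\,d\mathbb{P}(\vartheta)$, which is exactly property~(3). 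This establishes that $(g_\vartheta^n)_{\vartheta\in\Sigma_k}$ is the disintegration of $F_*^n\mu$.

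I expect no serious obstacle here: the only delicate points are the section identity $\big(F^{-n}(A)\big)^\eta=(f_\eta^n)^{-1}\big(A^{\sigma^n(\eta)}\big)$ and the index bookkeeping under $\vartheta=\sigma^n(\eta)$, which converts the ``forward block'' $\eta_0\dots\eta_{n-1}$ into the ``backward block'' $\vartheta_{-n}\dots\vartheta_{-1}$. As an alternative, one may prove the case $n=1$ as above and then induct, using the identity $(F_*^n\mu)_{\sigma^{-1}(\vartheta)}=f_{\vartheta_{-2}*}\cdots f_{\vartheta_{-(n+1)}*}\mu_{\sigma^{-(n+1)}(\vartheta)}$ together with the $n=1$ formula applied to $F_*^n\mu$ in place of $\mu$.
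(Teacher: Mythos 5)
Your proposal is correct and takes essentially the same route as the paper: compute the $\vartheta$-section of $F^{-n}(\cdot)$, insert it into the disintegration formula for $\mu$, and then use $\sigma$-invariance of $\mathbb{P}$ to change variables $\eta\mapsto\sigma^n(\eta)$, converting the forward block $\eta_0\dots\eta_{n-1}$ into the backward block $\vartheta_{-n}\dots\vartheta_{-1}$. The only cosmetic difference is that the paper works with product sets $A\times B$ and lets uniqueness of disintegration finish the job, while you verify properties (1)--(3) directly on arbitrary Borel $A$; both are fine.
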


\begin{proof}
Consider any rectangle $A\times B$ in
 $\Sigma_{k}\times X$. Then, by definition of a disintegration,
 $$
F^{n}_{*}\mu(A\times B)=\mu \big( F^{-n}(A\times B) \big)=
\int \mu_{\vartheta}\big( (F^{-n}(A\times B))^{\vartheta} \big)\, d\mathbb{P}(\vartheta).
$$
 Note that if $\vartheta\in \sigma^{-n}(A)$ then
 $$
 (F^{-n}(A\times B))^{\vartheta}=(f_{\vartheta_{n-1}}\circ \cdots \circ f_{\vartheta_{0}})^{-1}(B).
 $$ 
 Otherwise,
 $(F^{-n}(A\times B))^{\vartheta}=\emptyset$. 
 Thus
\[
\begin{split}
F^{n}_{*}\mu(A\times B)&=\int_{\sigma^{-n}(A)}
\mu_{\vartheta}\big( (f_{\vartheta_{n-1}}\circ \cdots
 f_{\vartheta_{0}})^{-1}(B) \big)\, d\mathbb{P}(\vartheta)\\
&=\int (\mathds{1}_{A}\circ \sigma^{n})(\vartheta) \, \mu_{\vartheta}
\big( (f_{\vartheta_{n-1}}\circ \cdots 
\circ f_{\vartheta_{0}})^{-1}(B) \big)\, d\mathbb{P}(\vartheta).
\end{split}
\]
Defining for $n>0$ the maps
$$
g_{B,n}\colon \Sigma_{k} \to \mathbb{R}, \quad
g_{B,n}(\vartheta)\eqdef
 \mu_{\sigma^{-n}(\vartheta)}\big( (f_{\vartheta_{-1}}\circ 
 \cdots \circ f_{\vartheta_{-n}})^{-1}(B)\big)
 $$
and recalling that $\mathbb{P}$ is $\sigma$-invariant we get
 \[
 \begin{split}
 F^{n}_{*}\mu(A\times B)&=
 \int (\mathds{1}_{A}\circ \sigma^{n})(\vartheta)\,
 (g_{B,n}\circ \sigma^{n})(\vartheta)\, d\mathbb{P}(\vartheta)=\int (\mathds{1}_{A}\, g)\circ \sigma^{n}\, d\mathbb{P}(\vartheta)\\
&=\int \mathds{1}_{A}\, g\, d\mathbb{P}(\vartheta)=\int_{A}\mu_{\sigma^{-n}(\vartheta)}
((f_{\vartheta_{-1}}\circ \cdots \circ f_{\vartheta_{-n}})^{-1}(B))\,d\mathbb{P} (\vartheta)\\
&=\int_{A}f_{\vartheta_{-1}*}\ldots _{*}
f_{\vartheta_{-n}*}\mu_{\sigma^{-n}(\vartheta)}(B)\, d\mathbb{P}(\vartheta).
 \end{split}
\]
Since this identity holds for every rectangle, we get
that the family of measures 
$f_{\vartheta_{-1}*}\ldots _{*}f_{\vartheta_{-n}*}\mu_{\sigma^{-n}(\vartheta)}$
 is the disintegration of $F^{n}_{*}\mu$ with respect to $\mathbb{P}$.
\end{proof}

\begin{lema}\label{l.interessante} 
For every sequence $(\mu_{n})$
of probabilities of $\mathcal{M}_{1}(M)$  and every
 $\vartheta\in S_{F}$ it holds
$$
\lim_{n\to \infty}f_{\vartheta_{-1}*}\ldots_\ast f_{\vartheta_{-n}*}\mu_{n}=\delta_{\rho(\vartheta)}.
$$
\end{lema}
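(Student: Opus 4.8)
The plan is to use directly the defining property of $S_F$. Fix $\vartheta\in S_F$ and set
$$
K_n\eqdef f_{\vartheta_{-1}}\circ\cdots\circ f_{\vartheta_{-n}}(M),\qquad n\ge 1 .
$$
These are nonempty compact sets with $K_{n+1}\subset K_n$, and by \eqref{e.stmenos} together with the definition \eqref{e.varrhoproj} of $\rho$ one has $\bigcap_{n\ge 1}K_n=\{\rho(\vartheta)\}$. First I would check that $\mathrm{diam}(K_n)\to 0$: otherwise there would be $\delta>0$ and points $x_n,y_n\in K_n$ with $d(x_n,y_n)\ge\delta$; extracting convergent subsequences and using that the $K_n$ are compact and nested, the limits would lie in $\bigcap_{m}K_m$ and still be at distance $\ge\delta$, contradicting that this intersection is a singleton. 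Equivalently, $K_n\to\{\rho(\vartheta)\}$ in the Hausdorff metric.

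Next, write $\nu_n\eqdef f_{\vartheta_{-1}*}\ldots_\ast f_{\vartheta_{-n}*}\mu_n=(f_{\vartheta_{-1}}\circ\cdots\circ f_{\vartheta_{-n}})_\ast\mu_n$. Since $\nu_n$ is the push-forward of a probability measure on $M$ under the map $f_{\vartheta_{-1}}\circ\cdots\circ f_{\vartheta_{-n}}$, whose image is exactly $K_n$, the measure $\nu_n$ is a probability measure with $\mathrm{supp}\,\nu_n\subset K_n$ --- and this holds no matter how $\mu_n$ is chosen, which is precisely why the convergence is uniform over arbitrary sequences $(\mu_n)$.

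Finally, for any continuous $\phi\colon M\to\mathbb{R}$, using that $\nu_n$ is a probability measure, $\rho(\vartheta)\in K_n$, and $\mathrm{supp}\,\nu_n\subset K_n$,
$$
\left|\int\phi\,d\nu_n-\phi(\rho(\vartheta))\right|
=\left|\int_{K_n}\bigl(\phi(x)-\phi(\rho(\vartheta))\bigr)\,d\nu_n(x)\right|
\le\sup_{x\in K_n}\bigl|\phi(x)-\phi(\rho(\vartheta))\bigr|,
$$
and the right-hand side tends to $0$ by uniform continuity of $\phi$ on the compact space $M$ together with $\mathrm{diam}(K_n)\to 0$. As $\phi$ was arbitrary, $\nu_n\to\delta_{\rho(\vartheta)}$ in the weak$*$ topology, which is the assertion. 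There is essentially no serious obstacle in this argument; the only point requiring a (routine) compactness argument is the vanishing of $\mathrm{diam}(K_n)$.
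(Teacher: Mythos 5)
Your proof is correct and takes essentially the same route as the paper's: both arguments reduce the statement to the fact that the nested compact images $f_{\vartheta_{-1}}\circ\cdots\circ f_{\vartheta_{-n}}(M)$ shrink to the single point $\rho(\vartheta)$ (which you justify explicitly by a compactness argument, and the paper uses directly from the definition of $S_F$), and then test weak$\ast$ convergence against an arbitrary continuous function. No gaps.
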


\begin{proof}
 Consider a sequence of probabilities $(\mu_n)$ and  $\vartheta\in  S_{F}$. Fix any 
$g\in C^{0}(M)$. Then  given any $\epsilon>0$ there is $\delta>0$ 
such that 
$$
|g(y)-g\circ\rho(\vartheta)|<\epsilon
\quad \mbox{for all $y\in M$ with $d(y,\rho(\vartheta))<\delta$.}
$$
Since $\vartheta \in  S_{F}$ there is $n_{0}$
such that  $d(f_{\vartheta_{-1}}\circ\cdots \circ f_{\vartheta_{-n}}(x),\rho(\vartheta))< \delta$ for every
$x\in X$ and every $n\geq n_{0}$. Therefore 
for $n\geq n_{0}$ we have
\[
\begin{split}
\left|
g\circ \rho(\vartheta)-
\int g\, df_{\vartheta_{-1}*}\ldots_\ast f_{\vartheta_{-n}*}\mu_{n}\right|&=
\left|
\int 
g\circ \rho(\vartheta)\,d\mu_{n}-
\int g\circ f_{\vartheta_{-1}}\circ\cdots\circ f_{\vartheta_{-n}}(x)\,d \mu_{n}
\right|\\
&\leq \int 
|g\circ \rho(\vartheta)- g\circ f_{\vartheta_{-1}}\circ\cdots\circ f_{\vartheta_{-n}}(x)|\, d\mu_{n}
\le \epsilon.
\end{split}
\]
This implies that
$$
\lim_{n\to \infty}\int g\, df_{\vartheta_{-1}*}\ldots_\ast f_{\vartheta_{-n}*}\mu_{n}=g\circ \rho(\vartheta)
$$
Since this holds for every continuous map $g$ the lemma follows.
\end{proof}
The proposition now follows from Lemmas~\ref{l.c.l.eraumclaim} and \ref{l.interessante}.
\end{proof}

To prove item (2) in the theorem, 
we need to construct an isomorphism between 
$(F,v_{\mathbb{P}})$ and $(\sigma,\mathbb{P})$.
Observe that the map $\hat\rho \colon S_{F}\to \mathrm{ graph}\,\rho$ defined by $
\hat\rho(\vartheta)=(\vartheta,\rho(\vartheta))$ is an invertible measurable transformation whose inverse is also measurable and given by $\hat\rho^{-1}(\vartheta,x)=\vartheta$. Note that $\sigma(S_{F})\subset S_{F}$ and $F(\mathrm{graph}\,\rho)\subset\mathrm{ graph}\,\rho$. 
It remains to see that
$\hat\rho\circ \sigma(\vartheta)=F\circ\hat\rho(\vartheta)$ for every $\vartheta\in S_{F}$. 

To get this conjugacy
recall that $f_{\vartheta_{0}}\circ\rho(\vartheta)=\rho\circ\sigma(\vartheta)$ for  every $\vartheta\in S_{F}$. Hence for  $\vartheta\in S_{F}$ we  have
$$
F\circ\hat\rho(\vartheta)=F(\vartheta,\rho(\vartheta))
=(\sigma(\vartheta),f_{\vartheta_{0}}\circ\rho(\vartheta))
=(\sigma(\vartheta),\rho\circ\sigma(\vartheta))
=\hat\rho\circ \sigma (\vartheta).
$$
Finally, since by hypothesis $\mathbb{P}(S_{F})=1$, and thus 
 $v_{\mathbb{P}}(\mathrm{graph}\,\rho)= \mathbb{P}(\hat\rho^{-1}(\mathrm{graph}\,\rho))=1$.  Therefore
$(F,v_{\mathbb{P}})$ and $(\sigma,\mathbb{P})$
are  isomorphic, ending the proof of item (2).

\medskip

We now prove item (3) in the theorem:
$\mathrm{supp}\, v_{\mathbb{P}}=\overline{\mathrm{ graph}\,(\rho_{|\mathrm{supp}\,\mathbb{P}
})}.$ 
Since $v_{\mathbb{P}}=\hat\rho_{*}\mathbb{P}$, we immediately get that
$v_{\mathbb{P}}({\mathrm{ graph}\,(\rho_{|\mathrm{supp}\,\mathbb{P}})})=1$ and hence
$$
\mathrm{supp}\, v_{\mathbb{P}}\subset \overline{\mathrm{ graph}\,(\rho_{|\mathrm{supp}\,\mathbb{P}})}.
$$
The next claim implies the inclusion ``$\supset$''  and hence the equality in item (3).
\begin{claim}
${\mathrm{graph}\,(\rho_{|\mathrm{supp}\,\mathbb{P}} )}\subset
\mathrm{supp}\, v_{\mathbb{P}}$.
 \end{claim}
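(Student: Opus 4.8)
The plan is to show that every point of $\mathrm{graph}\,(\rho_{|\mathrm{supp}\,\mathbb{P}})$ lies in the support of $v_\mathbb{P}$, i.e., that every open neighborhood of such a point has positive $v_\mathbb{P}$-measure. First I would fix a point $(\vartheta,\rho(\vartheta))$ with $\vartheta\in S_F\cap\mathrm{supp}\,\mathbb{P}$ and an arbitrary open neighborhood $W$ of it in $\Sigma_k\times M$. By shrinking $W$ I may assume it has the product form $W=[-\ell;\vartheta_{-\ell}\dots\vartheta_\ell]\times B$, where $B$ is an open ball around $\rho(\vartheta)$ of some radius $\delta>0$.

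The key step is to produce a cylinder of positive $\mathbb{P}$-measure whose $\hat\rho$-image lands inside $W$. Since $\vartheta\in S_F$, by the definition of $S_F$ and the fact (used already in the proof of Lemma~\ref{l.invariancep}) that membership in $S_F$ depends only on $\vartheta^-$, there is $n_0$ such that $\mathrm{diam}\big(f_{\vartheta_{-1}}\circ\cdots\circ f_{\vartheta_{-n}}(M)\big)<\delta$ for all $n\ge n_0$, and moreover $f_{\vartheta_{-1}}\circ\cdots\circ f_{\vartheta_{-n}}(M)\subset B$ for $n$ large, by definition of $\rho$. Fix such an $n\ge\max\{n_0,\ell+1\}$ and consider the cylinder $C\eqdef[-n;\vartheta_{-n}\dots\vartheta_\ell]$. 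Because $\vartheta\in\mathrm{supp}\,\mathbb{P}$, every cylinder neighborhood of $\vartheta$ has positive $\mathbb{P}$-measure, so $\mathbb{P}(C)>0$. Now for any $\eta\in C\cap S_F$ we have $\eta_i=\vartheta_i$ for $-n\le i\le\ell$, so $\eta\in[-\ell;\vartheta_{-\ell}\dots\vartheta_\ell]$ and, since $\rho(\eta)=\lim_m f_{\eta_{-1}}\circ\cdots\circ f_{\eta_{-m}}(p)$ depends only on the negative coordinates, $\rho(\eta)\in f_{\vartheta_{-1}}\circ\cdots\circ f_{\vartheta_{-n}}(M)\subset B$. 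Hence $\hat\rho(\eta)=(\eta,\rho(\eta))\in W$.

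Finally I would combine this with $\mathbb{P}(S_F)=1$: since $v_\mathbb{P}=\hat\rho_*\mathbb{P}$ and $C\cap S_F$ has $\mathbb{P}$-measure equal to $\mathbb{P}(C)>0$, we get
$$
v_\mathbb{P}(W)=\mathbb{P}\big(\hat\rho^{-1}(W)\big)\ge\mathbb{P}(C\cap S_F)=\mathbb{P}(C)>0.
$$
As $W$ was an arbitrary neighborhood of $(\vartheta,\rho(\vartheta))$, this point lies in $\mathrm{supp}\,v_\mathbb{P}$; since it was an arbitrary point of $\mathrm{graph}\,(\rho_{|\mathrm{supp}\,\mathbb{P}})$ and $\mathrm{supp}\,v_\mathbb{P}$ is closed, we conclude $\overline{\mathrm{graph}\,(\rho_{|\mathrm{supp}\,\mathbb{P}})}\subset\mathrm{supp}\,v_\mathbb{P}$, which is the claim.

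I do not expect a genuine obstacle here; the only point requiring a little care is the reduction of an arbitrary neighborhood to a product cylinder neighborhood (legitimate since such sets form a base for the product topology on $\Sigma_k\times M$) and the observation that both ``$\eta\in S_F$'' and the value $\rho(\eta)$ are determined by the negative coordinates of $\eta$, which is immediate from \eqref{e.stmenos} and \eqref{e.varrhoproj} and was already exploited in Lemma~\ref{l.invariancep}.
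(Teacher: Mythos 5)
Your proof is correct and follows essentially the same route as the paper: both arguments refine the cylinder around $\vartheta$ in the past coordinates until, by the nested-intersection definition of $S_F$ and $\rho$, the $\hat\rho$-image of that cylinder (intersected with $S_F$) lands inside the given neighborhood, and then both invoke $\vartheta\in\mathrm{supp}\,\mathbb{P}$ together with $\mathbb{P}(S_F)=1$ to get positive $v_\mathbb{P}$-measure. The only cosmetic difference is indexing (your $C=[-n;\vartheta_{-n}\dots\vartheta_\ell]$ versus the paper's $[\vartheta_{-(\ell+m)}\dots\vartheta_\ell]$).
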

 
 \begin{proof}
 Take any point $(\vartheta,x)\in {\mathrm{ graph}\,(\rho_{|\mathrm{supp}\,\mathbb{P}})}$, 
 to prove the claim it is enough to see that
 for every $\ell>0$ and every neighborhood $V$ of $x$  
 it holds
$$ 
v_\mathbb{P} \big( [\vartheta_{-\ell}\dots \vartheta_{\ell}]\times V \big)>0.
$$ 
Since $x\in V$ and $x=\rho(\vartheta)$ there is  $m\ge 0$ such that 
\begin{equation}
 \label{e.02122015}
 \rho([\vartheta_{-(\ell+m)}\dots \vartheta_{\ell}]\cap S_{F})\subset V.
 \end{equation}
 Thus by the definition of $v_{\mathbb{P}}$ 
we have 
 $$
v_\mathbb{P} \big( [\vartheta_{-\ell}\dots \vartheta_{\ell}]\times V \big)
 \geq v_{\mathbb{P}}  \big(
 [\vartheta_{-(\ell+m)}\dots \vartheta_{\ell}]\times V  \big)=
 \mathbb{P}  \big(
 [\vartheta_{-(\ell+m)}\dots \vartheta_{\ell}]\cap \rho^{-1}(V)
  \big). 
 $$ 
  By \eqref{e.02122015}
   $$
   \mathbb{P} \big( [\vartheta_{-(\ell+m)}\dots \vartheta_{\ell}]\cap \rho^{-1}(V)
   \big)= \mathbb{P}([\vartheta_{-(\ell+m)}\dots \vartheta_{\ell}]\cap S_{F}).
   $$ 
 Since that, by hypothesis,  $\vartheta \in \mathrm{supp}\, \mathbb{P} \cap [\vartheta_{-(\ell+m)}\dots \vartheta_{\ell}]$
 and  $\mathbb{P} (S_{F})=1$, 
 we get that 
 $$
 v_\mathbb{P} \big( [\vartheta_{-\ell}\dots \vartheta_{\ell}]\times V \big)
 \geq \mathbb{P}\big( [\vartheta_{-(\ell+m)}\dots \vartheta_{\ell}]\cap S_{F} \big)>0,
 $$
 which implies the claim.
 \end{proof}
 The proof of the third item of the theorem is now complete.
 \hfill \qed

\section{ Proof of Theorem~\ref{exponentialdw}}
\label{s.ergodicstepskew}
 
\subsection{Preliminaries}\label{ss.preliminaries}

Recall the definition of the canonical metric $d_0$ defined on $\Sigma_k$ in \eqref{e.do}.
In $\mathbb{R}^{m}$ we consider the sum metric $d_{1}(x,y)\eqdef \sum_{i=1}^m |x_{i}-y_{i}|$.  In this way, 
given any compact subset $M$ of $\mathbb{R}^{m}$ we can define a metric in $\Sigma_{k}\times M$ as 
follows: 
$$
d_{2}((\vartheta,x),(\xi,y))\eqdef d_{0}(\vartheta,\xi)+d_{1}(x,y).
$$
Using  the metric $d_{2}$, we define 
the space 
 $\mathrm{Lip}_{1}(\Sigma_{k}\times M)$ of Lipschitz maps 
 $\varphi\colon \Sigma_{k}\times M\to \mathbb{R}$
 with Lipschitz constant one and
  the Wasserstein metric in 
the space   $\mathcal{M}_{1}(\Sigma_{k}\times M)$
of probability measures of $\Sigma_k\times M$, recall
 in \eqref{Wass}.
 
 The next proposition is a preliminary step in the proof of Theorem~\ref{exponentialdw}. Recall the splitting property in Definition~\ref{splitgene2}.

\begin{prop}\label{p.fastfast}
Let $M\subset \mathbb{R}^{m}$ be a compact subset and
 $F\colon \Sigma_{k}\times M\to \Sigma_{k}\times M$ a step skew product with
 over 
 an irreducible Markov shift $(\Sigma_k, \mathscr{F}, \mathbb{P}, \sigma)$. 
 Let  $f_{1},\dots, f_{k}$ be the fiber maps of $F$ and $P=(p_{ij})$ the transition matrix of the Markov shift.
 Assume that
 \begin{itemize}
 \item
 $F$ splits and
 \item
 there is $u$ such that $p_{uj}>0$ for every $j$. 
 \end{itemize}
 Then there are an integer $N\geq 1$ and $0<\lambda<1$ such that 
$$
\int_{\Sigma_{k}} \mathrm{diam}\,( f_{\vartheta_{-1}}\circ \dots \circ f_{\vartheta_{-n}}(M))
\, d\mathbb{P} (\vartheta) \leq m \lambda^{n}, 
\quad {\mbox{for every $n\geq N$.}}
$$

\end{prop}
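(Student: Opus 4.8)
The plan is to show that the splitting property, combined with the hypothesis $p_{uj}>0$ for all $j$, forces the diameter of the backward composition $f_{\vartheta_{-1}}\circ\cdots\circ f_{\vartheta_{-n}}(M)$ to contract on average by a definite factor over blocks of bounded length, and then to iterate this gain. First I would work one coordinate at a time: write $\mathrm{diam}(A)\le \sum_{s=1}^m \mathrm{diam}(\pi_s(A))$ for any $A\subset M\subset\mathbb R^m$ (using the sum metric $d_1$), so it suffices to bound $\int_{\Sigma_k}\mathrm{diam}(\pi_s(f_{\vartheta_{-1}}\circ\cdots\circ f_{\vartheta_{-n}}(M)))\,d\mathbb P(\vartheta)$ by $\lambda^n$ for each fixed $s$. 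Fix $s$. Let $(a_1\dots a_\ell)$ and $(b_1\dots b_r)$ be the $\mathbb P$-admissible words from Definition~\ref{splitgene2}, with $a_\ell=b_r$, and set $M_1=f_{a_\ell}\circ\cdots\circ f_{a_1}(M)$, $M_2=f_{b_r}\circ\cdots\circ f_{b_1}(M)$, so that $\pi_s(f_\vartheta^n(M_1))\cap\pi_s(f_\vartheta^n(M_2))=\emptyset$ for every $\vartheta$, $n$, $s$; in particular $\pi_s(M_1)$ and $\pi_s(M_2)$ are disjoint (take $n=0$). The key elementary observation is monotonicity-free: whenever a backward word $\vartheta_{-1}\dots\vartheta_{-p}$ ends (on the left, i.e. in the positions read last) with $a_\ell\dots a_1$, the image $f_{\vartheta_{-1}}\circ\cdots\circ f_{\vartheta_{-p}}(M)$ is contained in $f_{\vartheta_{-1}}\circ\cdots\circ f_{\vartheta_{-\ell}}(M_1)$, and similarly for the $b$-word; since the $\pi_s$-projections of $f^{(\cdot)}(M_1)$ and $f^{(\cdot)}(M_2)$ are always disjoint subintervals of the bounded interval $\pi_s(M)$, whenever the backward itinerary contains \emph{both} a copy of the $a$-block and (further back) a copy of the $b$-block with compatible transitions, the resulting projected image has diameter at most $(1-c)$ times $\mathrm{diam}\,\pi_s(M)$ for a fixed $c>0$ — more precisely one argues that after seeing, going backward, a whole "$b$-block then $a$-block" pattern the projection is squeezed into a proper subinterval, and the squeezing fraction is bounded away from $1$ because the two disjoint projected pieces each carry positive "width allowance."

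Next I would quantify the probability that such a squeezing pattern occurs in a window of bounded length. Here the hypothesis that there is a row $u$ with $p_{uj}>0$ for all $j$, together with irreducibility, is used: irreducibility gives, for some $n_0$, strictly positive entries $p^{n_0}_{\ell,r}>0$ for all $\ell,r$, so from any symbol one can reach the symbol $a_1$ (the start of the $a$-block, read forward in time, which is the last symbol applied), run the $a$-block, then reach the start of the $b$-block, run it, etc.; the row-$u$ condition is what guarantees that one can always return to the state $u$ (or to any state) with positive probability regardless of the current symbol, so that the "renewal" argument does not get stuck and the one-step-ahead probabilities are uniformly positive. Concretely, I would produce an integer $L$ (depending only on $P$, $n_0$, $\ell$, $r$) and a constant $\eta>0$ such that, conditioned on the state at time $-mL$ (say), the probability that the block of coordinates $\vartheta_{-mL}\dots\vartheta_{-(m-1)L}$ is admissible and contains the squeezing pattern is at least $\eta$, uniformly in $m$ and in the conditioning. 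Because the diameter of the projected image is monotone nonincreasing under applying more maps (images only shrink: $f_{\vartheta_{-1}}\circ\cdots\circ f_{\vartheta_{-(n+1)}}(M)\subset f_{\vartheta_{-1}}\circ\cdots\circ f_{\vartheta_{-n}}(M)$), a single successful window at any depth already caps the total diameter by $(1-c)\,\mathrm{diam}\,\pi_s(M)$, and $j$ disjoint successful windows cap it by $(1-c)^j\,\mathrm{diam}\,\pi_s(M)$. Partitioning $\{1,\dots,n\}$ into roughly $n/L$ consecutive windows and using the Markov property, the number of successful windows among them stochastically dominates a Binomial$(\lfloor n/L\rfloor,\eta)$, so
\[
\int_{\Sigma_k}\mathrm{diam}\big(\pi_s(f_{\vartheta_{-1}}\circ\cdots\circ f_{\vartheta_{-n}}(M))\big)\,d\mathbb P(\vartheta)
\le \mathrm{diam}(\pi_s(M))\cdot \mathbb E\big[(1-c)^{\,\mathrm{Bin}(\lfloor n/L\rfloor,\eta)}\big]
= \mathrm{diam}(\pi_s(M))\cdot\big(1-\eta c\big)^{\lfloor n/L\rfloor}.
\]
Choosing $\lambda\in(0,1)$ with $\lambda^L>1-\eta c$ and $N$ large enough that $\mathrm{diam}(\pi_s(M))\cdot(1-\eta c)^{\lfloor n/L\rfloor}\le\lambda^n$ for all $n\ge N$ (absorbing the bounded factor $\mathrm{diam}(\pi_s(M))\le\mathrm{diam}(M)$ into the exponential), and summing over the $m$ coordinates $s$, gives $\int\mathrm{diam}(f_{\vartheta_{-1}}\circ\cdots\circ f_{\vartheta_{-n}}(M))\,d\mathbb P(\vartheta)\le m\lambda^n$ for $n\ge N$, as claimed.

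The main obstacle I expect is the quantitative squeezing estimate in the first paragraph: one must verify that encountering the backward "$b$-block $\dots$ $a$-block" pattern genuinely multiplies $\mathrm{diam}\,\pi_s$ by a factor bounded away from $1$ \emph{uniformly over which admissible symbols fill the rest of the window}. Disjointness of $\pi_s(f_\vartheta^n(M_1))$ and $\pi_s(f_\vartheta^n(M_2))$ holds for every $n$ and every $\vartheta$ by hypothesis, which is exactly what makes this uniform: applying the common tail $f_{\vartheta_{-1}}\circ\cdots\circ f_{\vartheta_{-q}}$ to the union $M_1^{(n)}\cup M_2^{(n)}$ (two pieces whose $\pi_s$-images are disjoint intervals inside $\pi_s(M)$) can only decrease the total projected diameter, and since the two disjoint pieces occupy separated parts of $\pi_s(M)$ there is a uniform lower bound — coming from the fixed finite data $M_1,M_2,M$ and the continuity/compactness — on how much of $\pi_s(M)$'s length is "lost" relative to the convex hull; making this precise requires a short compactness argument but no estimate on derivatives, so the proof stays purely topological as advertised. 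The remaining bookkeeping — admissibility of the concatenated blocks, uniform positivity of the window probabilities via irreducibility and the row-$u$ condition, and the stochastic-domination step — is routine Markov-chain combinatorics.
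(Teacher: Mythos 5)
There is a genuine gap at the crux of your argument: the claim that, once a ``squeezing pattern'' (an $a$-block and a $b$-block appearing in the backward itinerary with compatible transitions) occurs in a window, the projected diameter is cut by a factor $1-c$ with $c>0$ \emph{uniform in the itinerary and the depth}, and hence that $j$ successful windows give $(1-c)^j\,\mathrm{diam}\,\pi_s(M)$. The splitting hypothesis gives you \emph{disjointness} of $\pi_s(f_\vartheta^n(M_1))$ and $\pi_s(f_\vartheta^n(M_2))$ for every $\vartheta,n,s$, but disjointness of two compact subsets of a line segment does \emph{not} force either of them to have diameter smaller than the ambient segment by a definite multiplicative factor: $\pi_s(M_1)$ is not assumed connected, so it can contain both endpoints of $\pi_s(M)$ while $\pi_s(M_2)$ sits in a gap in the middle, giving $\mathrm{diam}\,\pi_s(M_1)=\mathrm{diam}\,\pi_s(M)$ with no contraction at all. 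Your proposed fix by ``a short compactness argument'' also does not go through, because the relevant index set, namely all pairs $(\vartheta,n)$ with $n\ge 0$, is not compact, and the images $f_\vartheta^n(M_i)$ carry no uniform geometric control. So the inequality $\mathrm{diam}\,\pi_s(f_\vartheta^n(M_1))\le(1-c)\,\mathrm{diam}\,\pi_s(f_\vartheta^n(M))$ that your binomial/stochastic-domination bound rests on is simply not a consequence of the hypotheses, and the estimate $\mathbb{E}\bigl[(1-c)^{\mathrm{Bin}}\bigr]=(1-\eta c)^{\lfloor n/L\rfloor}$, while correctly computed, does not apply.

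The paper sidesteps this entirely by never trying to contract diameters pathwise. It instead proves (by reduction to Lemma 6.1 of \cite{DiazMatias} on the one-sided shift) a \emph{pointwise} probability bound: for each fixed $x\in\mathbb{R}$ and each coordinate $s$, $\mathbb{P}\bigl(\vartheta: x\in\pi_s(f_{\vartheta_{n-1}}\circ\cdots\circ f_{\vartheta_0}(M))\bigr)\le\lambda^n$ for $n\ge N$. Here disjointness is exactly the right hypothesis: at each occurrence of a window where both the $a$-word and the $b$-word are admissible continuations, the point $x$ can lie in at most one of the two disjoint projected images, so the conditional probability that $x$ survives is cut by a definite factor depending only on the Markov transition probabilities (this is where $p_{uj}>0$ and irreducibility enter), with no quantitative geometric input needed. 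One then converts this into your desired integral estimate by a Fubini/Cavalieri switch: using $\sigma$-invariance to rewrite $\pi_s(f_{\vartheta_{-1}}\circ\cdots\circ f_{\vartheta_{-n}}(M))$ as $I_n^s(\sigma^{-n}\vartheta)$, noting $\mathrm{diam}=\mathfrak{m}$ for the relevant sets, and interchanging $\int\,d\mathbb{P}$ with $\int\,d\mathfrak{m}$ to get $\int\mathrm{diam}\bigl(\pi_s(\cdots)\bigr)\,d\mathbb{P}=\int\mathbb{P}(x\in I_n^s)\,d\mathfrak{m}(x)\le\lambda^n\cdot\mathfrak{m}(\pi_s(M))$, then summing over $s$. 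Your reduction to one coordinate via $d_1$ and your handling of the Markov renewal structure are both in the spirit of the paper, but the load-bearing step must be the measure-of-fibers estimate, not a diameter contraction.
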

\begin{proof}
 For every $n\geq 0$ and every $s$ we define the subset of $\mathbb{R}$
$$
I^{s}_{n}(\vartheta)\eqdef
  \pi_{s}(f_{\vartheta_{n-1}}\circ\dots \circ f_{\vartheta_{0}}(M))
$$ 
and let
$$
\mathbb{P}(x\in I^{s}_{n})\eqdef \mathbb{P}(\{\vartheta\in\Sigma_{k} \colon x\in I^{s}_{n}(\vartheta)\}).
$$ 

We begin with the following  lemma that is a modification of 
\cite[Lemma 6.1]{DiazMatias} about Markov measures on $\Sigma_k^+$. The lemma below is its version for $\Sigma_k$.

\begin{lema}\label{fastfast}
There is $N\geq 1$ and $0<\lambda<1$ such that for every $s$ we have 
$$
\mathbb{P}(x\in I^{s}_{n})\leq \lambda^{n},
\quad 
\mbox{for every $n\geq N$.}
$$ 
\end{lema}
\begin{proof}
Consider the projection 
$\Pi^{+}\colon \Sigma_{k}\to \Sigma_{k}^{+}$ given by $\Pi^{+}(\vartheta)\eqdef (\vartheta_{0}\vartheta_{1}\dots)$ for 
$\vartheta=(\vartheta_{i})_{i\in \mathbb{Z}}$. 
Note that $\mathbb{P}^{+}=(\Pi^{+})_{*}\mathbb{P}$ is a Markov measure on $\Sigma_{k}^{+}$ with the 
same transition matrix $P$ of  $\mathbb{P}$.  \cite[Lemma 6.1]{DiazMatias} claims that there is $N>0$ and $\lambda<1$ such that  
$$
\mathbb{P}^{+}(\vartheta^{+}\in \Sigma_{k}^{+}\colon x\in  \pi_{s}(f_{\vartheta^{+}_{n-1}}\circ\dots \circ f_{\vartheta^{+}_{0}}(M)))\leq \lambda^{n},
\quad 
\mbox{for every $n\geq N$}.
$$ 
Since 
$$
\mathbb{P}^{+}(\vartheta^{+}\in \Sigma_{k}^{+}\colon x\in \pi_{s}(f_{\vartheta^{+}_{n-1}}\circ\dots \circ f_{\vartheta^{+}_{0}}(M)))=\mathbb{P}(x\in I^{s}_{n})
$$ 
the lemma follows.
\end{proof}

Let {$\mathfrak{m}$}  denotes the Lebesgue measure in $\mathbb{R}$.
Note that for any sub-interval $J$ of  $\mathbb{R}$ we have  $\text{diam}\, (J)=\mathfrak{m} (J)$.
Observing that
$I_{n}^{s}(\sigma^{-n}(\vartheta))=\pi_{s}(f_{\vartheta_{-1}}\circ\dots \circ f_{\vartheta_{-n}}(M))$,
using the $\sigma$-invariance of $\mathbb{P}$,
$ \mbox{diam}\,(I_{n}^{s}(\sigma^{-n}(\vartheta)))= \mathfrak{m}(I_{n}^{s}(\vartheta)$, and
applying Fubini's theorem and Lemma \ref{fastfast} we get
\begin{equation}\label{adds}
\begin{split}
\int \mbox{diam}\, (\pi_{s}(f_{\vartheta_{-1}}\circ\dots \circ f_{\vartheta_{-n}}(M)))\, d\mathbb{P}&=
 \int \mbox{diam}\, (I_{n}^{s}(\sigma^{-n}(\vartheta)))\, d\mathbb{P}\\
 &=\int \mathfrak{m}(I_{n}^{s}(\vartheta))\, d\mathbb{P}\\
 &=
\int\mathbb{P}( x\in I_{n}^{s})\, d\mathfrak{m}\leq \lambda^{n},
\end{split}
\end{equation}
for every $n\geq N$, where $N$ and $\lambda$ are as in Lemma \ref{fastfast}. 

As 
the diameter is considered with respect to the sum metric $d_1(x,y)$, we have 
\begin{equation}\label{diam}
\mbox{diam} (X) \leq \sum_{s=1}^{m}\mbox{diam}\,(\pi_{s}(X)),
\quad \mbox{for every  compact subset $X\subset \mathbb{R}^{m}$.} \end{equation}
Hence, adding the terms in \eqref{adds} from $s=1$ to $m$ and using \eqref{diam}, it follows
$$
\int_{\Sigma_{k}} \mathrm{diam}\,( f_{\vartheta_{-1}}\circ \dots \circ f_{\vartheta_{-n}}(M))\, d\mathbb{P}\leq m
 \lambda^{n},
\quad
\mbox{for every $n\geq N$,}
$$
proving the proposition.
\end{proof}

\subsection{Proof of Theorem \ref{exponentialdw}}
 
The next lemma implies the first item of the theorem.

 \begin{lema}\label{l.PS=0}
 $\mathbb{P}(S_{F})=1$.
 \end{lema}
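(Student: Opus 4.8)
The plan is to deduce $\mathbb{P}(S_F)=1$ from Proposition~\ref{p.fastfast}, which is available under exactly the hypotheses of Theorem~\ref{exponentialdw} (irreducible Markov shift, $F$ splits, and the existence of a row $u$ with $p_{uj}>0$ for all $j$). Recall that by definition a sequence $\vartheta$ belongs to $S_F$ precisely when the nested compact sets
$$
K_n(\vartheta)\eqdef f_{\vartheta_{-1}}\circ\cdots\circ f_{\vartheta_{-n}}(M)
$$
shrink to a single point; since $K_{n+1}(\vartheta)\subset K_n(\vartheta)$ and each is compact and nonempty, the intersection is always nonempty, and it is a singleton if and only if $\operatorname{diam} K_n(\vartheta)\to 0$ as $n\to\infty$. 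So the statement to prove is that $\operatorname{diam} K_n(\vartheta)\to 0$ for $\mathbb{P}$-a.e.\ $\vartheta$.

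First I would invoke Proposition~\ref{p.fastfast} to get $N\ge 1$ and $\lambda\in(0,1)$ with
$$
\int_{\Sigma_k}\operatorname{diam}\big(f_{\vartheta_{-1}}\circ\cdots\circ f_{\vartheta_{-n}}(M)\big)\,d\mathbb{P}(\vartheta)\le m\,\lambda^n,\qquad n\ge N.
$$
Writing $\phi_n(\vartheta)\eqdef\operatorname{diam} K_n(\vartheta)$, this says $\int_{\Sigma_k}\phi_n\,d\mathbb{P}\le m\lambda^n$ for $n\ge N$. Summing over $n$ gives $\sum_{n\ge N}\int\phi_n\,d\mathbb{P}\le m\sum_{n\ge N}\lambda^n<\infty$, so by the monotone convergence theorem (or Tonelli) $\int\big(\sum_{n\ge N}\phi_n\big)\,d\mathbb{P}<\infty$, whence $\sum_{n}\phi_n(\vartheta)<\infty$ for $\mathbb{P}$-a.e.\ $\vartheta$. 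In particular $\phi_n(\vartheta)\to 0$ for $\mathbb{P}$-a.e.\ $\vartheta$, which by the observation above means $\bigcap_n K_n(\vartheta)$ is a singleton for $\mathbb{P}$-a.e.\ $\vartheta$, i.e.\ $\mathbb{P}(S_F)=1$.

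Alternatively, and even more cheaply, one can avoid summation: the sequence $\phi_n$ is nonincreasing in $n$ (as $K_{n+1}(\vartheta)\subset K_n(\vartheta)$), so $\phi_n\downarrow\phi_\infty$ pointwise for some measurable limit $\phi_\infty\ge 0$; by dominated convergence (the $\phi_n$ are bounded by $\operatorname{diam} M$) $\int\phi_\infty\,d\mathbb{P}=\lim_n\int\phi_n\,d\mathbb{P}\le\lim_n m\lambda^n=0$, so $\phi_\infty=0$ $\mathbb{P}$-a.e., giving the claim. There is no real obstacle here: the only subtlety worth a line is the measurability of $\vartheta\mapsto\phi_n(\vartheta)$, which follows because $K_n(\vartheta)$ depends continuously (in the Hausdorff metric) on the finitely many coordinates $\vartheta_{-1},\dots,\vartheta_{-n}$ and the diameter is continuous on the space of compact sets; this is implicitly used already in the statement of Proposition~\ref{p.fastfast}. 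Thus the genuine content of the lemma is entirely contained in Proposition~\ref{p.fastfast}, and what remains is this short measure-theoretic wrap-up.
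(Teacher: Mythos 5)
Your proposal is correct and rests on the same key ingredient as the paper's proof, namely Proposition~\ref{p.fastfast} together with the observation that $S_F$ is exactly the set of $\vartheta$ for which $\operatorname{diam}\bigl(f_{\vartheta_{-1}}\circ\cdots\circ f_{\vartheta_{-n}}(M)\bigr)\to 0$. The only difference is the measure-theoretic wrap-up: the paper extracts a $\mathbb{P}$-a.e.\ convergent subsequence from the $L^1$ convergence and then upgrades to full-sequence convergence via nestedness, whereas you go either via summability of $\int\phi_n\,d\mathbb{P}$ (which is in fact exactly the argument the paper itself uses later in the Claim proving item~(3)) or, even more directly, via dominated convergence applied to the pointwise-decreasing sequence $\phi_n\downarrow\phi_\infty$. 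Your second variant is the cleanest of the three, since monotonicity of $\phi_n$ removes the need for either a subsequence or a summability argument; all three are equivalent in substance.
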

 
 \begin{proof}
Proposition \ref{p.fastfast}  
 implies that 
 $$
 \lim_{n\to \infty} \int_{\Sigma_{k}} \text{diam}\, (f_{\vartheta_{-1}}\circ \dots \circ f_{\vartheta_{-n}}(M))\, d\mathbb{P}(\vartheta)=0
 $$
 In particular, there is a subsequence
 $n_{k}$ such that for $\mathbb{P}$-almost every $\vartheta$ it holds
 $$
 \lim_{n\to \infty} \text{diam}\, (f_{\vartheta_{-1}}\circ \dots \circ f_{\vartheta_{-n_{k}}}(M))= 0
 $$
 Since $(f_{\vartheta_{-1}}\circ \dots \circ f_{\vartheta_{-n_{k}}}(M))_{n}$ is sequence of nested compact sets, we have 
 that $\text{diam}\, (f_{\vartheta_{-1}}\circ \dots \circ f_{\vartheta_{-n}}(M))\to 0$ for $\mathbb{P}$-a.e.
 $\vartheta$, this implies that $\mathbb{P}(S_{F})=1$.
 \end{proof}

To prove the second item of the theorem (exponential convergence to the attracting measure) we
compute 
 $d_{W}(F_{*}^n\mu,\hat{\rho}_{*}\mathbb{P})$. Recall the notation in \eqref{eq:defcompo} and observe that $f_{\vartheta}^{n}(x)$ is the second 
 coordinate of $F^{n}(\vartheta,x)$. Given $\phi\in \text{Lip}_{1}(\Sigma_{k}\times M)$
  and a probability measure $\mu$ on $\Sigma_{k}\times M$ with marginal $\mathbb{P}$  and disintegration
  $(\mu_\vartheta)_{\vartheta\in \Sigma_k}$
  we have 
  \begin{equation}\label{e.desi1}
  \begin{split}
\int \phi(\vartheta,x)\, d F_{*}^n\mu&=
\int \phi(\sigma^{n}(\vartheta),f_{\vartheta}^{n}(x))\, d\mu(\vartheta,x)\\
&=\int\int \phi(\sigma^{n}(\vartheta),
f_{\vartheta}^{n}(x))\, d\mu_{\vartheta}(x)\,d\mathbb{P}(\vartheta).
\end{split}
\end{equation}
On the other hand, from the $F$-invariance of $\hat{\rho}_{*}\mathbb{P}$,
for each $n\ge 0$,  we have that
 \begin{equation}\label{e.desi2}
 \begin{split}
\int \phi\, d \hat{\rho}_{*}\mathbb{P}&=\int \phi(\sigma^{n}(\vartheta),f_{\vartheta}^{n}(\rho(\vartheta)))\, d\mathbb{P}(\vartheta)\\
&=\int \int  \phi(\sigma^{n}(\vartheta),f_{\vartheta}^{n}(\rho(\vartheta)))\, d\mu_{\vartheta}(x)\, d\mathbb{P}(\vartheta),
\end{split}
\end{equation}
where to get the second equality we use that the integrating function only depends on $\vartheta$.
Now using that $|\phi(\vartheta,x)-\phi(\vartheta,y)|\leq d_{1}(x,y)$ (recall that $\phi\in \text{Lip}_{1}(\Sigma_{k}\times M)$)
and
 $d_{1}(f_{\vartheta}^{n}(x),f_{\vartheta}^{n}(\rho(\vartheta)))\leq 
\text{diam} (f_{\vartheta}^{n}(M))$ then  it follows from equations \eqref{e.desi1} and \eqref{e.desi2}
that
 \[
  \begin{split}
 \left|\int \phi \, d F_{*}^n\mu- \int \phi \, d \hat{\rho}_{*}\mathbb{P}\right| &\leq\int 
\text{diam} \,(f_{\vartheta}^{n}(M))\, d\mathbb{P}(\vartheta) \\
&=\int_{\Sigma_{k}} \mathrm{diam}\,(
 f_{\vartheta_{-1}}\circ \dots \circ f_{\vartheta_{-n}}(M))\, d\mathbb{P}(\vartheta),
\end{split}
\]
where the last equality follows from the $\sigma$-invariance of $\mathbb{P}$ and 
$f_{\vartheta_{-1}}\circ \dots \circ f_{\vartheta_{-n}}(M)=f_{\sigma^{-n}(\vartheta)}^{n}(M)$.
By Proposition \ref{p.fastfast} there are $N$ and $0<\lambda<1$ such that 
 $$
  \left|\int \phi(\vartheta,x)\, d F_{*}^n\mu- \int \phi(\vartheta,x)\, d \hat{\rho}_{*}\mathbb{P}\right|\leq m\lambda^{n}, \quad \mbox{for every $n\geq N$.}
 $$
 Since $\phi$ is an arbitrary function in $\text{Lip}_{1}(\Sigma_{k}\times M)$  we have that 
  $$
  d_{W}(F_{*}^n\mu,\hat{\rho}_{*}\mathbb{P})\leq m \lambda^{n}, 
 \quad 
 \mbox{for every $n\geq N$.}
 $$

To prove the last item of the theorem we need the following claim:
 
\begin{claim} There is $q<1$ such that for $\mathbb{P}$-almost every
 $\vartheta$ there is $C(\vartheta)$  such that 
$$
\mathrm{diam} \,(f_{\vartheta}^{n}(M))\leq C(\vartheta)q^{n}.
$$
\end{claim}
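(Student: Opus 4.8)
The plan is to deduce the Claim from Proposition~\ref{p.fastfast} by a standard Borel--Cantelli argument, and then to read off item (3) of the theorem from the Claim together with item (1).

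First I would restate the averaged estimate of Proposition~\ref{p.fastfast} in terms of the forward compositions $f_\vartheta^n$. Since $f_{\vartheta_{-1}}\circ\cdots\circ f_{\vartheta_{-n}}(M)=f_{\sigma^{-n}(\vartheta)}^n(M)$ and $\mathbb{P}$ is $\sigma$-invariant, the proposition yields an integer $N\ge 1$ and $\lambda\in(0,1)$ with $\int_{\Sigma_k}\mathrm{diam}\,(f_\vartheta^n(M))\,d\mathbb{P}(\vartheta)\le m\lambda^n$ for all $n\ge N$. Next I would fix any $q\in(\lambda,1)$ (concretely $q\eqdef\sqrt\lambda$, so that $\lambda/q=q$) and apply Markov's inequality: for $n\ge N$,
$$
\mathbb{P}\big(\{\vartheta\colon\mathrm{diam}\,(f_\vartheta^n(M))>q^n\}\big)\le \frac{m\lambda^n}{q^n}=m\,q^{\,n},
$$
and $\sum_{n\ge N}m\,q^{\,n}<\infty$. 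By the Borel--Cantelli lemma, for $\mathbb{P}$-almost every $\vartheta$ there is a threshold $n_0(\vartheta)$ such that $\mathrm{diam}\,(f_\vartheta^n(M))\le q^n$ for every $n\ge n_0(\vartheta)$. Absorbing the finitely many remaining terms into a constant --- using $\mathrm{diam}\,(f_\vartheta^n(M))\le\mathrm{diam}\,(M)$ and $q^n\ge q^{n_0(\vartheta)}$ for $n<n_0(\vartheta)$ --- one gets $C(\vartheta)\eqdef\max\{1,\,\mathrm{diam}\,(M)\,q^{-n_0(\vartheta)}\}$ with $\mathrm{diam}\,(f_\vartheta^n(M))\le C(\vartheta)\,q^n$ for every $n\ge 0$. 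This establishes the Claim.

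To finish item (3), I would intersect the full-measure set produced by the Claim with $S_F$, which has full measure by item (1); on the resulting set $\Omega$ (with $\mathbb{P}(\Omega)=1$) the coding map $\rho$ is defined and the diameter bound holds. For $\vartheta\in\Omega$, iterating the conjugation relation \eqref{e.conjugation} gives $\rho(\sigma^n(\vartheta))=f_\vartheta^n(\rho(\vartheta))$, so that $F^n(\vartheta,x)=(\sigma^n(\vartheta),f_\vartheta^n(x))$ and $\hat\rho(\sigma^n(\vartheta))=(\sigma^n(\vartheta),f_\vartheta^n(\rho(\vartheta)))$ have the same first coordinate and their fiber coordinates both lie in $f_\vartheta^n(M)$; hence, by the definition of the metric $d_2$ on $\Sigma_k\times M$,
$$
d\big(F^n(\vartheta,x),\hat\rho(\sigma^n(\vartheta))\big)=d_1\big(f_\vartheta^n(x),f_\vartheta^n(\rho(\vartheta))\big)\le\mathrm{diam}\,(f_\vartheta^n(M))\le C(\vartheta)\,q^n
$$
for every $x\in M$ and every $n\ge 0$, which is the desired estimate with $N=0$. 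I do not anticipate a genuine obstacle here: the only points requiring mild care are the passage from reversed to forward compositions via $\sigma$-invariance, the choice of $q$ strictly between $\lambda$ and $1$ so that the tail probabilities stay summable, and the bookkeeping turning ``eventually $\le q^n$'' into ``$\le C(\vartheta)\,q^n$ for all $n$''; the substantive exponential decay has already been supplied by Proposition~\ref{p.fastfast}.
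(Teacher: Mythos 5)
Your proof is correct, and it differs from the paper's proof only in the measure-theoretic tool used to pass from the integrated bound of Proposition~\ref{p.fastfast} to the almost-sure pointwise bound. The paper takes $q\in(\lambda,1)$, uses the Monotone Convergence Theorem to show that $\sum_{n\ge 1}\int \mathrm{diam}\,(f_\vartheta^n(M))\,q^{-n}\,d\mathbb{P}(\vartheta)<\infty$, concludes that $\sum_n \mathrm{diam}\,(f_\vartheta^n(M))\,q^{-n}<\infty$ for $\mathbb{P}$-a.e.\ $\vartheta$, and reads off $C(\vartheta)$ as (a bound for) that sum --- so the constant is produced in one stroke for all $n\ge 1$, with no case split. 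You instead combine Markov's inequality with Borel--Cantelli, which gives the weaker-looking ``eventually $\le q^n$'' and then requires the extra bookkeeping step of absorbing the finitely many initial terms into $C(\vartheta)$. Both routes are standard and correct; the paper's summable-series argument is marginally more compact (and also quietly handles the fact that the estimate $m\lambda^n$ only holds for $n\ge N$, since the finitely many exceptional terms contribute a finite amount to the series), while your Borel--Cantelli version is perhaps more familiar and makes the choice of $q\in(\lambda,1)$ more transparent. Your deduction of item (3) from the Claim, intersecting with $S_F$ and using $\rho(\sigma^n(\vartheta))=f_\vartheta^n(\rho(\vartheta))$ together with the definition of the product metric $d_2$, matches the paper's.
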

\begin{proof}
The proof follows using the  ideas in the proof of \cite[Theorem 2]{DiazMatias}
to obtain the synchronization of Markovian random products. Since 
$f_{\vartheta_{-1}}\circ \dots \circ f_{\vartheta_{-n}}(M)=f_{\sigma^{-n}(\vartheta)}^{n}(M)$
for every $\vartheta$, it follows from Proposition \ref{p.fastfast} and the $\sigma$-invariance of $\mathbb{P}$ that 
$$
\int \text{diam}\,(f_{\vartheta}^{n}(M))\, d\mathbb{P}(\vartheta)\leq m\lambda^{n}.
$$
In particular, taking  any $q<1$ with $\lambda<q$ and applying the 
Monotone Convergence Theorem we have that 
$$
\displaystyle\int\sum _{n=1}^{\infty}\frac{\text{diam}\,(f_{\vartheta}^{n}(M))}{q^{n}}\, d\mathbb{P}(\vartheta)=
\displaystyle\sum _{n=1}^{\infty}\frac{\int \text{diam}\,(f_{\vartheta}^{n}(M))\, d\mathbb{P}(\vartheta)}{q^{n}}<\infty.
$$ 
It follows that 
$$
\sum _{n=1}^{\infty}\frac{\text{diam}\,(f_{\vartheta}^{n}(M))}{q^{n}}< \infty,
\quad
\mbox{for 
$\mathbb{P}$-a. e. $\vartheta$.}
$$
Therefore for $\mathbb{P}$-almost every $\vartheta$ there 
is $C=C(\vartheta)$ such that 
$$
\text{diam}\,(f_{\vartheta}^{n}(M))\leq C q^{n},
$$ 
proving the claim.
\end{proof}

Since $\mathbb{P}(S_{F})=1$ then is straightforward to check that 
 for $\mathbb{P}$-almost every $\vartheta$ it holds 
$\rho(\sigma^{n}(\vartheta))=f_{\vartheta}^{n}(\rho(\vartheta))$. Hence
it follows from the previous claim that for $\mathbb{P}$-almost every $\vartheta$ there is $C(\vartheta)$ 
such that for every $x$ we have 
that 
$$
 d_2(F^{n}(\vartheta,x),\hat\rho(\sigma^{n}(\vartheta))=
 d_{1}(f_{\vartheta}^{n}(x),f_{\vartheta}^{n}(\rho(\vartheta)))\leq \text{diam}\,(f_{\vartheta}^{n}(M))\leq C(\vartheta) q^{n}, 
$$ 
proving item ($4$).
 \hfill \qed

\bibliographystyle{acm}
\bibliography{references}

\end{document}